\date{}
\newtheorem{thm}{Theorem}[section]
\newtheorem{cor}[thm]{Corollary}
\newtheorem{lem}[thm]{Lemma}
\newtheorem{prop}[thm]{Proposition}
\newtheorem{rmk}[thm]{Remark}
\theoremstyle{definition}
\newtheorem{defn}[thm]{Definition}
\numberwithin{equation}{section}
\newcommand{\mf}{\mathfrak}
\newcommand{\Z}{\mathbb{Z}}
\newcommand{\N}{\mathbb{N}}
\newcommand{\V}{\mathbb{V}}
\newcommand{\lm}{\lambda}
\newcommand{\ten}{\otimes}
\newcommand{\ra}{\rightarrow}
\newcommand{\lra}{\longrightarrow}
\newcommand{\ds}{\displaystyle}
\newcommand{\mc}{\mathcal}
\newcommand{\rank}{\textrm{rank}}
\newcommand{\ones}{{\bf 1}}
\begin{document}

\title[Degenerate Sklyanin algebras]{Degenerate Sklyanin algebras and Generalized Twisted Homogeneous Coordinate rings}
\author{Chelsea Walton}

\date{January 17, 2009}
\keywords{noncommutative algebraic geometry, degenerate sklyanin algebra, point module, twisted homogeneous coordinate ring}
\subjclass[2000]{14A22, 16S37, 16S38, 16W50}
\thanks{The author was partially supported by the NSF: grants DMS-0555750, 0502170.}


\maketitle
\vspace{-.1in}

\begin{center}
\footnotesize{{Department of Mathematics\\ University of Michigan\\ Ann Arbor, MI 48109.}}\\
\footnotesize{{\it E-mail address}: {\tt notlaw@umich.edu}}
\end{center}

\begin{abstract}
In this work, we introduce the point parameter ring $B$, a generalized twisted homogeneous coordinate ring associated to a degenerate version of the three-dimensional Sklyanin algebra. The surprising geometry of these algebras yields an analogue to a result of Artin-Tate-van den Bergh, namely that $B$ is generated in degree one and thus is a factor of the corresponding degenerate Sklyanin algebra. 
\end{abstract}


\section{Introduction} 
Let $k$ be an algebraically closed field of characteristic 0. We say a $k$-algebra $R$ is {\it connected graded (cg)} when $R = \bigoplus_{i \in \N} R_i$ is $\N$-graded with $R_0 = k$.

A vital development in the field of Noncommutative Projective Algebraic Geometry is the investigation of connected graded noncommutative rings with use of geometric data. In particular, a method was introduced by Artin-Tate-van den Bergh in \cite{ATV1} to construct corresponding well-behaved graded rings, namely twisted homogeneous coordinate rings (tcr) \cite{ASta, KeRSta, SmSta}. However, there exist noncommutative rings that do not have sufficient geometry to undergo this process \cite{KeRSta}. The purpose of this paper is to explore a recipe suggested in \cite{ATV1} for building a generalized analogue of a tcr for {\it any} connected graded ring. As a result, we provide a geometric approach to examine all degenerations of the Sklyanin algebras studied in \cite{ATV1}.

We begin with a few historical remarks. In the mid-1980s, Artin and Schelter \cite{ASc} began the task of classifying noncommutative analogues of the polynomial ring in three variables, yet the rings of interest were not well understood. How close were these noncommutative rings to the commutative counterpart $k[x,y,z]$? Were they Noetherian? Domains? Global dimension 3? These questions were answered later in \cite{ATV1} and the toughest challenge was analyzing the following class of algebras.

\begin{defn} \label{def:Skl3} Let $k\{x,y,z\}$ denote the free algebra on the noncommuting variables $x, y$, and $z$. The {\it three-dimensional Sklyanin algebras} are defined as
\begin{equation} \label{eq:S(abc)}
S(a,b,c) ~=~ \frac{k\{x,y,z\}}{\left(\begin{array}{c}ayz+ bzy+ cx^2,\\azx+ bxz+cy^2,\\axy+ byx+ cz^2 \end{array}\right)}
\end{equation}
for $[a:b:c] \in \mathbb{P}^2_k \setminus \mf{D}$ where 
$$\mf{D} = \{[0:0:1], [0:1:0], [1:0:0]\} ~\cup~ \{[a:b:c] ~|~ a^3=b^3=c^3=1\}.$$
\end{defn}

As algebraic techniques were exhausted, two seminal papers \cite{ATV1} and \cite{ATV2} arose introducing algebro-geometric methods to examine noncommutative analogues of the polynomial ring. In fact, a geometric framework was specifically associated to the Sklyanin algebras $S(a,b,c)$ via the following definition and result of \cite{ATV1}.

\begin{defn} \label{def:ptmod} A {\it point module} over a ring $R$ is a cyclic graded left $R$-module $M$ where $\dim_k{M_i}=1$ for all $i$. 
\end{defn}

\begin{thm} \label{thm:Skl3} Point modules for $S=S(a,b,c)$ with $[a:b:c] \notin \mf{D}$ are parameterized by the points of a smooth cubic curve 
\begin{equation} \label{eq:E(abc)}
E=E_{a,b,c}: (a^3+b^3+c^3)xyz - (abc)(x^3+y^3+z^3) = 0 \subset \mathbb{P}^2.
\end{equation}
\noindent The curve $E$ is equipped with $\sigma \in$ Aut($E$) and the invertible sheaf $i^{\ast}\mc{O}_{\mathbb{P}^2}(1)$ from which we form the corresponding twisted homogeneous coordinate ring $B$. There exists a regular normal element $g \in S$, homogeneous of degree 3, so that $B \cong S/gS$ as graded rings. The ring $B$ is a Noetherian domain and thus so is $S$. Moreover for $d \geq 1$, we get dim$_k B_d = 3d$. Hence $S$ has the same Hilbert series as $k[x,y,z]$, namely $H_S(t) = \frac{1}{(1-t)^3}$.
\qed
\end{thm}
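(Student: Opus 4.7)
The plan proceeds in three stages: construct the point scheme geometrically to extract the data $(E,\sigma,\mathcal{L})$; build a graded algebra surjection $\varphi\colon S \to B$; and analyze $\ker\varphi$ to deduce the remaining claims.

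For the geometric stage, I would multilinearize the relations in \eqref{eq:S(abc)} and assemble them into an equation $M(v)\cdot v^T = 0$, where $v=(x,y,z)$ and $M$ is the $3\times 3$ matrix with linear entries
\[ M = \begin{pmatrix} cx & az & by \\ bz & cy & ax \\ ay & bx & cz \end{pmatrix}. \]
A truncated point module of length $\geq 3$ then corresponds to a pair $(p_0,p_1)\in\mathbb{P}^2\times\mathbb{P}^2$ with $M(p_0)\cdot p_1^T = 0$. Existence of a nonzero such $p_1$ forces $\det M(p_0)=0$, and a direct computation gives $\det M = (a^3+b^3+c^3)xyz - abc(x^3+y^3+z^3)$, cutting out exactly the cubic $E$ of \eqref{eq:E(abc)}. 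The hypothesis $[a:b:c]\notin\mf{D}$ is precisely the Jacobian criterion for $E$ to be smooth, whence $\ker M(p_0)$ is one-dimensional for each $p_0\in E$, determining a unique $p_1=:\sigma(p_0)\in\mathbb{P}^2$. Showing $\sigma(E)\subseteq E$ so that $\sigma\in\Aut(E)$ requires a symmetry analysis of $M$ (together with its left-kernel version); granting this, an induction shows that $\Gamma_n\cong E$ via $(p_0,\sigma(p_0),\ldots,\sigma^n(p_0))\mapsto p_0$.

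Next, set $\mathcal{L} = i^*\mc{O}_{\mathbb{P}^2}(1)$ and form $B = B(E,\sigma,\mathcal{L}) = \bigoplus_{d\geq 0} H^0(E,\mathcal{L}_d)$ with the twisted multiplication, where $\mathcal{L}_d = \mathcal{L}\ten\sigma^*\mathcal{L}\ten\cdots\ten(\sigma^{d-1})^*\mathcal{L}$. Since $\deg\mathcal{L}_d = 3d > 0$ and $E$ has arithmetic genus one, Riemann--Roch gives $\dim_k B_d = 3d$ for $d\geq 1$, so $H_B(t) = (1-t^3)/(1-t)^3$. Standard tcr theory \cite{ASta,KeRSta,SmSta} then guarantees $B$ is a Noetherian domain generated in degree one, since $\mathcal{L}$ is $\sigma$-ample. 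The restriction $S_1 \twoheadrightarrow H^0(E,\mathcal{L}) = B_1$ extends to a graded algebra map $S\to B$ precisely because $\sigma$ was engineered so that each defining relation of $S$ vanishes when evaluated on $(p_0,\sigma(p_0))\in E\times E$; combined with degree-one generation of $B$, this gives a graded surjection $\varphi\colon S\twoheadrightarrow B$.

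For the final stage, I expect a degree-$3$ element $g\in S$ with $\ker\varphi = gS$ that is normal and regular. To locate $g$, I would compute $\dim_k S_3 = 10$ directly from the free-algebra presentation and compare with $\dim_k B_3 = 9$; this forces $\ker\varphi_3$ to be one-dimensional, spanned by an explicit cubic $g$ derived from $\det M$. The principal obstacle is establishing the algebraic properties of $g$: normality, which I would verify by a direct commutator calculation modulo the relations, and regularity. For regularity, I would compare Hilbert series: the identity $H_S(t)(1-t^3) = H_B(t)$ combined with the \emph{upper} bound $H_S(t)\leq (1-t)^{-3}$ coming from the free algebra modulo the relations forces $H_S(t) = (1-t)^{-3}$ and in particular makes multiplication by $g$ injective in every degree. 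Once $g$ is a regular normal element, the short exact sequence $0\to S(-3)\xrightarrow{\cdot g}S\to B\to 0$ transfers the domain and Noetherian properties from $B$ to $S$ and yields the Hilbert series $H_S(t) = (1-t)^{-3}$.
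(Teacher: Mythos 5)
First, a point of reference: the paper itself gives no proof of Theorem \ref{thm:Skl3} --- it is quoted from \cite{ATV1} --- so the only meaningful comparison is with the Artin--Tate--van den Bergh argument. Your first two stages do follow that argument in outline: multilinearize the relations, identify $V_2$ with the graph of $\sigma$ inside $E\times E$ via the matrix $M$ whose determinant is the cubic, form $B=B(E,\sigma,\mc{L})$, get $\dim_k B_d=3d$ from Riemann--Roch, and obtain a graded surjection $S\twoheadrightarrow B$ from degree-one generation of $B$. One caveat: $\mf{D}$ is \emph{not} literally the non-smooth locus of $E_{a,b,c}$ (for instance $[1:1:0]\notin\mf{D}$ yet $E_{1,1,0}$ is the triangle $xyz=0$, and $[1:-1:0]$ gives the zero cubic), so ``precisely the Jacobian criterion'' is too strong; this looseness is already present in the quoted statement, but an actual proof must either restrict to the parameters where $E$ is genuinely smooth or treat the special fibers separately, as \cite{ATV1} does.

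The genuine gap is in your third stage. The bound $H_S(t)\le (1-t)^{-3}$ does not ``come from the free algebra modulo the relations'': a presentation only yields \emph{lower} bounds on the dimensions of a quotient (e.g.\ $\dim_k S_3\ge 27-18=9$; even your claim $\dim_k S_3=10$ requires the additional fact that the $18$ cubics $vf_i$, $f_iv$ satisfy exactly one linear dependence), and the Sklyanin relations are notoriously not a Gr\"obner basis, so no Diamond-lemma-type upper bound is available --- obtaining that upper bound is exactly the hard part of the theorem, as the paper's introduction recalls. Moreover, the identity $H_S(t)(1-t^3)=H_B(t)$ that you propose to combine with this bound is \emph{equivalent} to the two facts you are trying to establish, namely that $\ker\varphi$ equals $gS$ (not merely contains $g$) and that $g$ is regular; using it as an input makes the argument circular. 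In \cite{ATV1} the logical order is the reverse: after producing $g$ and proving normality, one shows by an induction on degree, using the geometry of $E$ and cohomology on it, that the kernel of $S\to B$ is exactly $gS=Sg$ and that $g$ is a nonzerodivisor; only then do the exact sequence $0\to S(-3)\to S\to B\to 0$ and $\dim_k B_d=3d$ deliver $H_S(t)=(1-t)^{-3}$, the Noetherian property, and the domain property. You need to supply that identification of the kernel (or some genuine substitute for the missing upper bound); as written, the last stage of your proposal does not close.
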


In short, the tcr $B$ associated to $S(a,b,c)$ proved useful in determining the Sklyanin algebras' behavior. 
\medskip

Due to the importance of the Sklyanin algebras, it is natural to understand their degenerations to the set $\mf D$.

\begin{defn} \label{def:Sdeg} The rings $S(a,b,c)$ from (\ref{eq:S(abc)}) with $[a:b:c] \in \mf{D}$ are called the {\it degenerate three-dimensional Sklyanin algebras}. Such a ring is denoted by $S(a,b,c)$ or $S_{deg}$ for short.
\end{defn}

In section 2, we study the basic properties of degenerate Sklyanin algebras resulting in the following proposition.

\begin{prop} \label{prop:S(1bc)}
The degenerate three-dimensional Sklyanin algebras have Hilbert series $H_{S_{deg}}(t) = \frac{1+t}{1-2t}$, they have infinite Gelfand Kirillov dimension, and are not left or right Noetherian, nor are they domains. Furthermore, the algebras $S_{deg}$ are Koszul and have infinite global dimension.
\end{prop}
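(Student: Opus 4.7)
My plan is to split $\mf{D}$ into two strata and apply Bergman's Diamond Lemma with the deg-lex order $x > y > z$ in each case. By permutation of variables, the monomial stratum reduces to the single point $[1:0:0]$, where the relations collapse to $xy = yz = zx = 0$; the cube-root stratum consists of the nine points with $a^3 = b^3 = c^3 = 1$, all of which satisfy $abc \neq 0$.

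For the monomial point $[1:0:0]$, the Gr\"obner basis is already complete, and a normal $k$-basis of $S_{deg}$ consists of words in $\{x, y, z\}$ avoiding $xy, yz, zx$ as consecutive pairs. Such words are exactly the walks in the directed graph with out-edges $x \to \{x, z\}$, $y \to \{y, x\}$, $z \to \{z, y\}$, so $\dim_k (S_{deg})_n = 3 \cdot 2^{n-1}$ for $n \geq 1$, giving $H_{S_{deg}}(t) = (1+t)/(1-2t)$, and $S_{deg}$ is Koszul since quadratic monomial algebras always are. For the cube-root case, the condition $c \neq 0$ allows solving for the squares; a quick check shows the leading term of each relation under deg-lex lies in $\{x^2, xz, xy\}$ (all starting with $x$), yielding three quadratic reduction rules. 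The only overlap ambiguities come from the composites $x^3, x^2y, x^2z$, and a direct computation shows each resolves. Hence the three relations form a quadratic Gr\"obner basis, Priddy's theorem (quadratic Gr\"obner basis implies Koszul) yields Koszulity, and the normal monomials are exactly the words in $\{y, z\}^*$ optionally followed by a single $x$ -- again a total of $3 \cdot 2^{n-1}$ monomials in degree $n \geq 1$.

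From the Hilbert series, the remaining assertions follow. Exponential growth of $\dim_k (S_{deg})_n$ gives $\textrm{GKdim}(S_{deg}) = \infty$. The monomial case is patently not a domain since $xy = 0$; in the cube-root case, explicit zero divisors can be extracted in low degree by exploiting the defining relations and normal forms (for instance, from nontrivial syzygies among the $f_i$). For non-Noetherianity, I construct explicit strictly ascending chains of left ideals: in the monomial case take $L_n := \sum_{k=0}^n S_{deg} \cdot (yx^k)$, and observe that the normal monomial $yx^{n+1}$ is not of the form $r \cdot yx^k$ for any normal monomial $r$ and $k \leq n$ (the junction $x \cdot y$ is forbidden, and length counting rules out the remaining cases). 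The analogous chain $L_n := \sum_{k=0}^n S_{deg} \cdot (yz^k)$ works in the cube-root case.

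Finally, Koszul duality gives $H_{S_{deg}^!}(t) = 1/H_{S_{deg}}(-t) = (1 + 2t)/(1 - t) = 1 + 3t + 3t^2 + \cdots$, so $S_{deg}^!$ is infinite-dimensional, and hence $\textrm{gl.dim}(S_{deg}) = \infty$. The main technical obstacle is the Diamond Lemma overlap analysis in the cube-root case: the Hilbert series, Koszulity, and infinite global dimension all rest on the (direct but slightly involved) verification that the $S$-polynomial overlaps $x^3, x^2y, x^2z$ of the reduction system $\{x^2, xz, xy\}$ all reduce to zero under the quadratic rules.
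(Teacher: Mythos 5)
Your overall route is viable and genuinely different from the paper's. The paper handles the cube-root stratum by writing $S(1,b,c)$ as a factor of an Ore extension $k\{x,y\}[z;\alpha,\delta]$ by a normal element $\Omega$, gets the Hilbert series from freeness over $k\{x,y\}$ (equivalently a Diamond-lemma basis), deduces Koszulity from Cassidy--Shelton (Ore extensions) plus Shelton--Tingey (factoring by a regular normal element), proves infinite global dimension by exhibiting an explicit infinite monomial basis of $S^!$, and gets non-Noetherianity for free from exponential growth via Stephenson--Zhang. Your direct Gr\"obner/PBW argument with leading terms $\{x^2,xz,xy\}$ is a legitimate substitute: the three overlaps $x^3,x^2y,x^2z$ do resolve (the verification genuinely uses $a^3=b^3=c^3$), and once that computation is written out it delivers the Hilbert series and, via Priddy, Koszulity in one stroke; your use of $H_{S^!}(t)=1/H_S(-t)=(1+2t)/(1-t)$ to conclude $\mathrm{gl.dim}=\infty$ is also fine once Koszulity is in hand.

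There are, however, two concrete gaps. First, the non-domain claim in the cube-root case is not established: ``zero divisors from nontrivial syzygies among the $f_i$'' is not the right mechanism and produces nothing as stated. What is needed is a decomposable (rank-one) tensor in the $3$-dimensional span of the relations, and this is a special feature that must be exhibited, not a generic fact (a generic plane of quadratic relations in $\mathbb{P}^8$ misses the Segre variety). The paper supplies the witness explicitly: $f_1+bf_2+cf_3=(x+by+bc^2z)(cx+cy+b^2z)$, giving two nonzero degree-one elements with zero product. Second, your non-Noetherianity argument in the cube-root case is only asserted by analogy: unlike the monomial case, $S\cdot yz^k$ is not spanned by monomials, and showing $yz^{n+1}\notin\sum_{k\le n}S\,yz^k$ requires a genuine normal-form analysis of which reduced words can occur in products $u\cdot yz^k$ (already in degree $3$ one finds, e.g., $yx\cdot y=-by^2x-cyz^2$, so $yz^2$ does appear in normal forms of elements of $S y$ and must be shown uncancellable). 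Moreover you only treat left ideals, while the statement claims failure of both chain conditions (here one can pass to the opposite algebra, which is again degenerate Sklyanin). The cleaner fix, and the paper's route, is to use the exponential growth you have already established together with Stephenson--Zhang, which rules out left and right Noetherianity simultaneously.
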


The remaining two sections construct a generalized twisted homogeneous coordinate ring $B=B(S_{deg})$ for the degenerate Sklyanin algebras. We are specifically interested in point modules over $S_{deg}$ (Definition \ref{def:ptmod}).
Unlike their nondegenerate counterparts, the point modules over $S_{deg}$ are \underline{not} parameterized by a projective scheme so care is required.
Nevertheless, the degenerate Sklyanin algebras \underline{do} have geometric data which is described by the following definition and theorem.

\begin{defn} \label{def:trunc} A {\it truncated point module of length d} over a ring $R$ is a cyclic graded left $R$-module $M$ where dim$_k M_i =1$ for $0 \leq i \leq d$ and dim$_k M_i =0$ for $i > d$. 
The {\it $d^{th}$ truncated point scheme} $V_d$ parameterizes isomorphism classes of length $d$ truncated point modules. 
\end{defn}

\begin{thm} \label{thm:truncptsch} For $d \geq 2$, the truncated point schemes $V_d \subset (\mathbb{P}^2)^{\times d}$ corresponding to $S_{deg}$ are isomorphic to a union of

\begin{center}
$\left\{ \begin{array}{cl}
\text{three copies of~} (\mathbb{P}^1)^{\times \frac{d-1}{2}} 
\text{and three copies of~} (\mathbb{P}^1)^{\times \frac{d+1}{2}}, & \text{for~} d ~\text{odd}; \\
\text{six copies of~} (\mathbb{P}^1)^{\times \frac{d}{2}},  & \text{for~} d ~\text{even}.
\end{array}\right.$
\end{center}
\end{thm}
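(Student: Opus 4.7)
The plan is to translate the length-$d$ truncated point module conditions into bilinear equations on $(\mathbb{P}^2)^{\times d}$, describe $V_2$ explicitly, and then bootstrap to general $d$ using the recursive inclusion $V_d \subset V_{d-1} \times \mathbb{P}^2$ cut out by $(p_{d-2},p_{d-1}) \in V_2$. A length-$d$ truncated point module corresponds to a tuple $(p_0,\ldots,p_{d-1}) \in (\mathbb{P}^2)^{\times d}$ in which $p_i$ encodes the linear functional on $S_1$ giving the action $M_i \to M_{i+1}$. Multilinearizing the three defining relations of $S_{deg}$ yields, for each pair $(p_i,p_{i+1})$, the linear system $M_{p_i}\,p_{i+1}^{T} = 0$ in the coordinates of $p_{i+1}$, where
$$M_p \;=\; \begin{pmatrix} c\alpha & a\gamma & b\beta \\ b\gamma & c\beta & a\alpha \\ a\beta & b\alpha & c\gamma \end{pmatrix}, \qquad p = (\alpha : \beta : \gamma).$$
A direct expansion gives $\det M_p = (a^3+b^3+c^3)\,\alpha\beta\gamma - abc\,(\alpha^3+\beta^3+\gamma^3)$. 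For $[a:b:c] \in \mf{D}$ this cubic degenerates: it reduces (up to a nonzero scalar) to the monomial $\alpha\beta\gamma$ when $[a:b:c]$ is a coordinate point, and is a degenerate fiber of the Hesse pencil when $a^3 = b^3 = c^3 = 1$. In both subcases it factors as three distinct linear forms, so the singular locus of $M_p$ in $\mathbb{P}^2$ is a triangle of three lines $L_1, L_2, L_3$ meeting pairwise in three distinct vertices $A_1, A_2, A_3$.

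From this triangle I would next read off $V_2$. At a vertex $A_i$, $M_{A_i}$ has rank $1$, so $\Gamma(A_i) := \ker M_{A_i}$ is a $\mathbb{P}^1$ that (by a short computation in each subcase) coincides with one of the three sides $L_{\pi(i)}$, for some permutation $\pi$ of $\{1,2,3\}$. At a non-vertex point of a side $L_j$, $M_p$ has rank $2$, so $\Gamma(p)$ is a single vertex $A_{\tau(j)}$. Consequently $V_2$ is the union of six irreducible $\mathbb{P}^1$'s --- three \emph{vertex-start} curves $\{A_i\} \times L_{\pi(i)}$ and three \emph{line-start} curves $L_j \times \{A_{\tau(j)}\}$ --- matching the statement for $d = 2$.

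For $d \geq 3$ I would iterate this local picture. A generic point of $V_d$ belongs to one of two kinematic patterns according to whether $p_0$ is a vertex or a generic point of a side. In the vertex-start pattern, $p_0 = A_{i_0}$ is a vertex, $p_1$ is a free point on $L_{\pi(i_0)}$, a generic such $p_1$ forces $p_2 = A_{i_2}$ to be a specific vertex, $p_3$ is then free on a line, and so on. Taking Zariski closure produces a subscheme $\{A_{i_0}\} \times L_{\pi(i_0)} \times \{A_{i_2}\} \times L_{i_3} \times \cdots$ with $\lfloor d/2 \rfloor$ free $\mathbb{P}^1$-factors, so isomorphic to $(\mathbb{P}^1)^{\lfloor d/2 \rfloor}$. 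The analogous line-start pattern yields components $\cong (\mathbb{P}^1)^{\lceil d/2 \rceil}$. Three choices of starting vertex and three of starting side give six components in all. For $d$ odd the two exponents read $(d-1)/2$ and $(d+1)/2$; for $d$ even both equal $d/2$, recovering the claim.

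The main obstacle is two-fold. First, one must check that each Zariski closure above remains inside $V_d$: whenever a generically free line point specializes to a triangle vertex, the subsequent bilinear constraints must still be satisfied. This reduces to the incidence assertion ``$A_{\tau(j)} \in L_{\pi(i)}$'' for the relevant $i, j$ with $A_i \in L_j$, which can be dispatched directly in each of the two subtypes of $\mf{D}$. Second, one must show no further irreducible components appear; this holds because every valid $p_0$ is either a triangle vertex or a generic point of a side, placing every point of $V_d$ in the closure of one of the six loci above.
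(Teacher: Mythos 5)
Your route is essentially the paper's: observe that the degenerate cubic $E$ is a triangle of three lines, that the multilinearization matrix has rank $2$ at a non-vertex point of a side (kernel a single vertex) and rank $1$ at a vertex (kernel an entire side), and then assemble $V_d$ from alternating vertex/line chains; this reproduces Lemmas \ref{lem:notinZ_i} and \ref{lem:inZ_i} and the decomposition of Proposition \ref{prop:gammadparam}, and your description of $V_2$ is correct. The genuine gap is your final completeness step, the claim that no further irreducible components occur. Your justification (``every valid $p_0$ is either a triangle vertex or a generic point of a side, placing every point of $V_d$ in the closure of one of the six loci'') only constrains the start of the chain, not its continuation: at a vertex the kernel is a whole side, and that side contains the two \emph{other} vertices, so the next entry need not be the fixed vertex of your period-two pattern, and after such a turn the chain continues along a different side. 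Concretely, for $S(1,1,1)$ one has $\ker \mathbb{M}_{[1:1:1]}=\mathbb{P}^1_A=\{x+y+z=0\}$, $\ker \mathbb{M}_{[1:\zeta:\zeta^2]}=\{x+\zeta^2y+\zeta z=0\}$, and $[1:\zeta:\zeta^2]\in\mathbb{P}^1_A$; hence the entire surface $\mathbb{P}^1_A\times\{[1:1:1]\}\times\{[1:\zeta:\zeta^2]\}\times\{x+\zeta^2y+\zeta z=0\}$ satisfies every multilinearized relation and lies in $V_4$, yet its generic point lies in none of your six components (nor in the six sets of Proposition \ref{prop:gammadparam}), since those force positions $0,2$, or else positions $1,3$, to equal a common fixed vertex, and neither holds here. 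Enumerating the admissible patterns (free entries on sides, never adjacent) produces twelve such two-dimensional $\mathbb{P}^1\times\mathbb{P}^1$ families in $V_4$ rather than six, so the ``no further components'' step is not merely unproved: it fails for $d\ge 4$, and no choice of the permutations $\pi,\tau$ or closure/incidence check repairs it.

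You should also know that this is exactly the weak point of the paper's own argument, which you have faithfully reproduced: Lemma \ref{lem:inZ_i} asserts that the continuation $[\theta(y_{d-1},z_{d-1}):y_{d-1}:z_{d-1}]$ avoids $Z_i$, which is false at the two parameter values giving the other two vertices, and the strict alternation $(E\setminus Z_1)\times Z_1\times(E\setminus Z_1)\times\cdots$ invoked before Proposition \ref{prop:gammadparam} does not follow from Lemmas \ref{lem:notinZ_i} and \ref{lem:inZ_i}. A correct treatment must classify all vertex/side chains, which yields additional components for $d\ge 4$ and in turn changes the dimension counts $\dim_k B_d$ and the Hilbert series used in Section 4.
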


\noindent The precise description of $V_d$ as a subset of $(\mathbb{P}^2)^{\times d}$ is provided in Proposition \ref{prop:gammadparam}. Furthermore, this scheme is not a disjoint union and Remark \ref{rmk:Sing(V_d)} describes the singularity locus of $V_d$.

In the language of \cite{RZ}, observe that the point scheme data of degenerate Sklyanin algebras does not stabilize to produce a projective scheme (of finite type) and as a consequence we cannot construct a tcr associated to $S_{deg}$. Instead, we use the truncated point schemes $V_d$ produced in Theorem \ref{thm:truncptsch} and a method from \cite[page 19]{ATV1} to form the $\N$-graded, associative ring $B$ defined below.

\begin{defn} \label{def:ptparam} The {\it point parameter ring} $B = \bigoplus_{d \geq 0} B_d$ is a ring associated to the sequence of subschemes $V_d$ of $(\mathbb{P}^2)^{\times d}$ (Definition \ref{def:trunc}). We have $B_d = H^0(V_d, \mc{L}_d)$ where $\mc{L}_d$ is the restriction of invertible sheaf $$pr_1^{\ast}\mc{O}_{\mathbb{P}^2}(1) \ten \dots \ten pr_d^{\ast}\mc{O}_{\mathbb{P}^2}(1) \cong \mc{O}_{(\mathbb{P}^2)^{\times d}}(1,\dots,1)$$ to $V_d$. The multiplication map $B_i \times B_j \ra B_{i+j}$ is defined by applying $H^0$ to the isomorphism $pr_{1,\dots,i}(\mc{L}_i) \ten_{\mc{O}_{V_{i+j}}} pr_{i+1,\dots,i+j}(\mc{L}_j) \ra \mc{L}_{i+j}$.
\end{defn}

Despite point parameter rings not being well understood in general, the final section of this paper verifies the following properties of $B=B(S_{deg})$.

\begin{thm} \label{thm:BforSdeg} The point parameter ring $B$ for a degenerate three-dimen-sional Sklyanin algebra $S_{deg}$ has Hilbert series $H_B(t)=\frac{(1+t^2)(1+2t)}{(1-2t^2)(1-t)}$ and is generated in degree one.
\end{thm}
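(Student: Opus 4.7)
The plan is to exploit the explicit description of $V_d$ supplied by Theorem \ref{thm:truncptsch} and Proposition \ref{prop:gammadparam}: each irreducible component is isomorphic to some product $(\mathbb{P}^1)^{\times n}$, and $\mc{L}_d$ restricts to $\mc{O}(1, \ldots, 1)$ on each such component. Both halves of the theorem then reduce to line-bundle computations on these products together with a careful bookkeeping of intersections.

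To compute the Hilbert series, I would apply a Mayer--Vietoris (equivalently, inclusion-exclusion) argument to the cover of $V_d$ by its six irreducible components. On each component, the K\"unneth formula gives $h^0(V_d^{(i)}, \mc{L}_d|_{V_d^{(i)}}) = 2^{n_i}$, where $n_i = d/2$ for $d$ even and $n_i \in \{(d-1)/2, (d+1)/2\}$ for $d$ odd. The pairwise (and higher-fold) intersections of these components, read off from the coordinate description, are themselves products of projective lines or reduced points; because such subschemes have vanishing higher cohomology for $\mc{O}(1, \ldots, 1)$, the Mayer--Vietoris sequence degenerates into an alternating sum of $h^0$'s. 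Assembling the contributions and performing a short generating-function manipulation then identifies the sum with $(1+t^2)(1+2t)/[(1-2t^2)(1-t)]$.

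To prove $B$ is generated in degree one, I would verify that the multiplication map $\mu_d : B_1 \ten B_{d-1} \ra B_d$ is surjective for every $d \geq 2$. Since $V_1 = \mathbb{P}^2$, the space $B_1$ is the $3$-dimensional $k$-vector space with basis $\{x, y, z\}$, and $\mu_d$ is induced by the closed embedding $V_d \hra \mathbb{P}^2 \times V_{d-1}$ that comes from truncation of length-$d$ point modules. On each component $V_d^{(i)} \cong (\mathbb{P}^1)^{\times n_i}$, the relevant projection $V_d^{(i)} \ra \mathbb{P}^2$ factors through some $\mathbb{P}^1 \hra \mathbb{P}^2$, so component-wise surjectivity reduces to the elementary fact that $H^0(\mathbb{P}^1, \mc{O}(1))^{\ten n} \ra H^0((\mathbb{P}^1)^{\times n}, \mc{O}(1, \ldots, 1))$ is surjective.

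The chief obstacle will be compatibility across intersections: sections constructed as products of degree-one elements must agree on every overlap $V_d^{(i)} \cap V_d^{(j)}$ in order to descend to a global section of $\mc{L}_d$. The Mayer--Vietoris analysis of Step 1 produces an explicit basis of $B_d$ indexed by the decomposition of $V_d$, and the question becomes whether every basis element lifts to an element of $B_1^{\ten d}$ with matching restrictions on each overlap. Using the coordinates in Proposition \ref{prop:gammadparam} to track these restrictions, one can exhibit the required lifts component-by-component; the dimension count from Step 1 then forces $\mu_d$ to be surjective, completing the proof.
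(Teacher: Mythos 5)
Your Hilbert-series computation is essentially the paper's argument (Proposition \ref{prop:|B_d|}): the paper uses the normalization sequence $0 \ra \mc{O}_{V_d}(\ones) \ra (\pi_{\ast}\mc{O}_{V_d'})(\ones) \ra \mc{S}(\ones) \ra 0$, which is exactly your Mayer--Vietoris over the six components, and then K\"unneth on each $W_{d,i}$. One caution: the sequence does not ``degenerate into an alternating sum of $h^0$'s'' merely because the intersections (six points, by Remark \ref{rmk:Sing(V_d)}) have no higher cohomology; you must prove that the restriction map $H^0(V_d',\mc{O}_{V_d'}(\ones)) \ra H^0(\mc{S}(\ones))$ is surjective, equivalently $H^1(V_d,\mc{O}_{V_d}(\ones))=0$. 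The paper does this by noting each component contains exactly two singular points and $\mc{O}_{W_{d,i}}(\ones)$ is very ample, hence separates them; with that inserted your first half is sound and matches the paper.

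The second half is where there is a genuine gap. Generation in degree one amounts to surjectivity of the restriction map $H^0(V_d\times\mathbb{P}^2,\mc{O}(\ones)) \ra H^0(V_{d+1},\mc{O}(\ones))$, since by K\"unneth the source is $B_d\ten B_1$; the obstruction is $H^1\bigl(\mc{I}_{\frac{V_{d+1}}{V_d\times\mathbb{P}^2}}(\ones)\bigr)$, and killing it is the real content (Proposition \ref{prop:H^1(IV)}). Your plan --- lift each Mayer--Vietoris basis element of $B_{d+1}$ ``component-by-component'' and let ``the dimension count force surjectivity'' --- does not engage this: an element of $B_d\ten B_1$ restricts to \emph{all} six components simultaneously, so you cannot prescribe its restrictions independently, and the component-wise surjectivity of $H^0(\mathbb{P}^2,\mc{O}(1))^{\ten n}\ra H^0((\mathbb{P}^1)^{\times n},\mc{O}(\ones))$ (the easy part; note also that half the coordinate projections of a component land on a single point, not a line) says nothing about whether sections that disagree suitably at the six singular points are reachable. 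A dimension count cannot close this, because the dimension of the image of $\mu_{d,1}$ is precisely what is unknown. The paper's proof supplies the missing mechanism: a diagram built from the normalization sequences of $V_d$ and $V_{d+1}$, a snake-lemma reduction to Lemma \ref{lem:beta} (vanishing of $H^1$ of the ideal sheaves $\mc{I}_{\frac{W_{d+1,i}}{W_{d,i}\times\mathbb{P}^2}}(\ones)$ via Koszul-type resolutions, split by parity) and Lemma \ref{lem:psisurj} (surjectivity of $\psi$, proved by an explicit section that separates the two singular points on a chosen component and vanishes along $[1:\omega:\omega^2]$, exploiting that $W_{d+1,i_{k_0}}=W_{d,i_{k_0}}\times[1:\omega:\omega^2]$). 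Your sketch contains no counterpart to these steps, and it also misses that the case $d=1$ needs a separate argument ($V_1=\mathbb{P}^2$ is not a union of lines; the paper resolves $\mc{I}_{V_2}$ in $\mathbb{P}^2\times\mathbb{P}^2$ directly). As written, the generation-in-degree-one half is an unproven assertion rather than a proof.
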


Hence we have a surjection of $S_{deg}$ onto $B$, which is akin to the result involving Sklyanin algebras and corresponding tcrs (Theorem \ref{thm:Skl3}).

\begin{cor} \label{cor:BforSdeg} The ring $B= B(S_{deg})$ has exponential growth and therefore infinite GK dimension. Moreover $B$ is neither right Noetherian, Koszul, nor a domain. 
Furthermore $B$ is a factor of the corresponding $S_{deg}$ by an ideal $K$ where $K$ has six generators of degree 4 (and possibly more of higher degree).
\end{cor}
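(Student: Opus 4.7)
The plan is to deduce all claims from Theorem~\ref{thm:BforSdeg} and the Hilbert series of $S_{deg}$ recorded in Proposition~\ref{prop:S(1bc)}.

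A partial-fraction analysis of $H_B(t)=\frac{(1+t^2)(1+2t)}{(1-2t^2)(1-t)}$ locates the pole of smallest modulus at $t=1/\sqrt{2}$, so $\dim_k B_d\sim c\cdot 2^{d/2}$. This yields exponential growth and hence infinite GK dimension. Invoking the standard theorem (due to Stephenson--Zhang) that a connected graded Noetherian $k$-algebra must have sub-exponential growth then rules out Noetherianity on either side.

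Next, the surjection $S_{deg}\twoheadrightarrow B$ is established as follows. Since $B$ is generated in degree one by Theorem~\ref{thm:BforSdeg}, the natural map $T(B_1)\twoheadrightarrow B$ is graded surjective. Unwinding the description of $V_2\subset(\mathbb{P}^2)^{\times 2}$, a length-two truncated point module with transition data $(p_0,p_1)$ satisfies the three Sklyanin relations of~(\ref{eq:S(abc)}) viewed as bilinear forms in the two copies of $\mathbb{P}^2$, so $V_2$ is cut out precisely by these three forms and $\dim_k H^0(V_2,\mc{O}(1,1))=9-3=6$. Consequently the three quadratic relations of $S_{deg}$ lie in the kernel of $B_1\otimes B_1\to B_2$, giving the desired graded surjection $\pi:S_{deg}\twoheadrightarrow B$. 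Set $K=\ker\pi$. Comparison of Hilbert series gives $H_{S_{deg}}(t)-H_B(t)=6t^4+18t^5+54t^6+\cdots$, so $K_d=0$ for $d\le 3$ while $\dim_k K_4=6$. The six relations in degree four are therefore minimal generators of $K$; the argument leaves open whether further minimal generators arise in higher degree.

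Finally, because $K_2=0$ but $K_4\ne 0$, the defining ideal of $B$ in $T(B_1)$ is not generated by quadrics, so $B$ is not a quadratic algebra and in particular not Koszul. For the domain claim, since $\pi$ restricts to an isomorphism on $(S_{deg})_d$ for $d\le 3$, any zero divisor in $S_{deg}$ coming from this low-degree range persists in $B$. Proposition~\ref{prop:S(1bc)} supplies such divisors for every $[a:b:c]\in\mf{D}$: for instance, at $[1:1:1]$ the three Sklyanin relations force $(x+y+z)^2=0$, yielding a nonzero nilpotent in $B$, and at $[1:0:0]$ one has $xy=0$ directly. The key technical step, and the main obstacle, is the identification of the defining equations of $V_2$ with the three Sklyanin relations, which is precisely what makes both the surjection $\pi$ and the count of minimal generators of $K$ accessible.
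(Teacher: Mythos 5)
Your proposal is correct and follows essentially the same route as the paper: exponential growth, infinite GK dimension and failure of Noetherianity from the Hilbert series plus Stephenson--Zhang, the surjection $S_{deg}\twoheadrightarrow B$ from generation in degree one (Theorem \ref{thm:Bgenindeg1}) together with the quadratic relations vanishing on $V_2$, $\dim_k K_4=6$ by comparing $H_{S_{deg}}$ with $H_B$, non-Koszulity from the resulting degree-4 minimal relations, and failure of the domain property because the degree-one zero divisors of $S_{deg}$ (Corollary \ref{cor:S(1bc)}) survive in $B$ since $K$ vanishes in low degrees. Note that your figures $\dim_k S_4=24$ and $\dim_k B_4=18$ are the correct ones (the values printed in the paper's Corollary \ref{cor:factor} appear to be misprints), and in either case the difference is $6$, so the conclusion is unchanged.
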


Therefore the behavior of $B(S_{deg})$ resembles that of $S_{deg}$. It is natural to ask if other noncommutative algebras can be analyzed in a similar fashion, though we will not address this here.
\medskip

\noindent {\bf Acknowledgements.} I sincerely thank my advisor Toby Stafford for introducing me to this field and for his encouraging advice on this project. I am also indebted to Karen Smith for supplying many insightful suggestions. I have benefited from conversations with Hester Graves, Brian Jurgelewicz, and Sue Sierra, and I thank them.


\section{Structure of degenerate Sklyanin algebras}

In this section, we establish Proposition \ref{prop:S(1bc)}. We begin by considering the degenerate Sklyanin algebras $S(a,b,c)_{deg}$ with $a^3=b^3=c^3=1$ (Definition \ref{def:Skl3}) and the following definitions from \cite{GW}.

\begin{defn} \label{def:Ore} Let $\alpha$ be an endomorphism of a ring $R$. An {\it $\alpha$-derivation} on $R$ is any additive map $\delta: R \ra R$ so that $\delta(rs) = \alpha(r)\delta(s) + \delta(r)s$ for all $r,s \in R$. The set of $\alpha$-derivations of $R$ is denoted $\alpha$-Der($R$).

We write $S=R[z; \alpha, \delta]$ provided $S$ is isomorphic to the polynomial ring $R[z]$ as a left $R$-module but with multiplication given by $zr= \alpha(r)z + \delta(r)$ for all $r\in R$. Such a ring $S$ is called an {\it Ore extension of $R$}.
\end{defn}

By generalizing the work of \cite{BjSta} we see that most degenerate Sklyanin algebras are factors of Ore extensions of the free algebra on two variables.
 
\begin{prop} \label{prop:S(1bc)2} In the case of $a^3=b^3=c^3=1$, assume without loss of generality $a=1$. Then for $[1:b:c] \in \mf{D}$ we get the ring isomorphism
\begin{equation} \label{eq:S(1bc)}
S(1,b,c) \cong \frac{k\{x,y\}[z; \alpha, \delta]}{(\Omega)}
\end{equation}
where $\alpha \in $End($k\{x,y\}$) is defined by $\alpha(x)= -bx$, $\alpha(y)= -b^2y$ and the element $\delta \in \alpha$-Der($k\{x,y\}$) is given by $\delta(x) = -cy^2$, $\delta(y) = -b^2cx^2$. Here  $\Omega = xy+ byx +cz^2$ is a normal element of $k\{x,y\}[z, \alpha, \delta]$.
\end{prop}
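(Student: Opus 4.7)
The plan is to establish in sequence: (i) that $T := k\{x,y\}[z;\alpha,\delta]$ is a well-defined Ore extension, (ii) that $T/(\Omega)$ is naturally identified with $S(1,b,c)$, and (iii) that $\Omega$ is normal in $T$.

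For (i), since $k\{x,y\}$ is free on two generators, any $k$-linear assignment on $x,y$ extends uniquely to a $k$-algebra endomorphism, and for any such $\alpha$ any prescribed images $\delta(x),\delta(y)$ extend uniquely to an $\alpha$-derivation. Hence the stated formulas unambiguously define $\alpha$, $\delta$, and thereby $T$.

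For (ii), I would view both $S(1,b,c)$ and $T/(\Omega)$ as quotients of $k\{x,y,z\}$ and compare defining ideals. The Ore rule $zx = \alpha(x)z+\delta(x)=-bxz-cy^2$ is precisely the second Sklyanin relation, and the Ore rule $zy=-b^2yz-b^2cx^2$ equals $b^2$ times the first Sklyanin relation (using $b^3=1$). The third Sklyanin relation is $\Omega$ itself, so the two ideals coincide and the isomorphism follows.

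Step (iii) is the principal obstacle. The strategy is to exhibit a $k$-algebra automorphism $\tau:T\to T$ satisfying $\Omega r=\tau(r)\Omega$ on each generator $r$; a short exploratory computation suggests the diagonal rescaling $\tau(x)=b^2x$, $\tau(y)=by$, $\tau(z)=z$. I would first check that $\tau$ is compatible with the two Ore commutation rules (a brief calculation using $b^3=1$), so that it extends to a ring automorphism of $T$, then verify $\Omega r=\tau(r)\Omega$ on each generator. The case $r=z$ is the cleanest: reducing $z\Omega$ to the normal form $\sum r_n z^n$ via the Ore rules produces spurious terms in $cx^3$ and $cy^3$ that cancel in pairs thanks to $b^3=1$, yielding $\Omega z=z\Omega$. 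The cases $r=x$ and $r=y$ are analogous but more delicate: reducing $z^2 x$ and $z^2 y$ to normal form creates spurious cubic monomials in $yx^2, x^2y$ (respectively $xy^2, y^2x$) whose coefficients only cancel after invoking both $b^3=1$ and $c^3=1$. Tracking these cancellations is the central computation, after which normality and hence the proposition follow.
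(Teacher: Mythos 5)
Your proposal is correct and takes essentially the same route as the paper: identify the two Ore commutation rules with the first two Sklyanin relations (so both sides of the isomorphism are the quotient of $k\{x,y,z\}$ by the same ideal once $\Omega$ is factored out) and verify normality of $\Omega$ by a direct computation on the generators, which you merely repackage as conjugation by the automorphism $\tau(x)=b^2x$, $\tau(y)=by$, $\tau(z)=z$. Your scalars do check out ($\Omega x=b^2x\,\Omega$, $\Omega y=b\,y\,\Omega$, $\Omega z=z\,\Omega$, using $b^3=c^3=1$), which is the same computation the paper records in the equivalent form $x\Omega=\Omega\cdot bx$, $z\Omega=\Omega\cdot z$ (and the paper's printed $y\Omega=\Omega\cdot by$ should read $\Omega\cdot b^2y$, consistent with your $\tau$).
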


\begin{proof}
By direct computation $\alpha$ and $\delta$ are indeed an endomorphism and $\alpha$-derivation of $k\{x,y\}$ respectively. Moreover $x \cdot \Omega = \Omega \cdot bx$, ~$y \cdot \Omega = \Omega \cdot by$, ~$z \cdot \Omega = \Omega \cdot z$ so $\Omega$ is a normal element of the Ore extension. Thus both rings of (\ref{eq:S(1bc)}) have the same generators and relations.
\end{proof}

\begin{rmk} \label{rmk:S(1bc)}
Some properties of degenerate Sklyanin algebras are easy to verify without use of the Proposition \ref{prop:S(1bc)2}. Namely one can find a basis of irreducible monomials via Bergman's Diamond lemma \cite[Theorem 1.2]{Be} to imply $\dim_k S_d = 2^{d-1}3$ for $d \geq 1$. Equivalently $S(1,b,c)$ is free with a basis $\{1,z\}$ as a left or right module over $k\{x,y\}$. Therefore, $H_{S_{deg}}(t) = \frac{1+t}{1-2t}$.
\end{rmk}

Therefore due to Proposition \ref{prop:S(1bc)2} (for $a^3=b^3=c^3=1$) or Remark \ref{rmk:S(1bc)} we have the following immediate consequence.

\begin{cor} \label{cor:S(1bc)}
The degenerate Sklyanin algebras have exponential growth, infinite GK dimension, and are not right Noetherian. Furthermore $S_{deg}$ is not a domain.
\end{cor}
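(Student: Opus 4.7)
The plan is to verify each assertion, splitting cases according to whether $[a:b:c]$ is one of the three coordinate points $\{[1:0:0], [0:1:0], [0:0:1]\}$ or belongs to the nine-point locus where $a^3=b^3=c^3=1$. Exponential growth and infinite GK dimension are immediate from Remark \ref{rmk:S(1bc)}, since $H_{S_{deg}}(t)=(1+t)/(1-2t)$ forces $\dim_k S_d = 3\cdot 2^{d-1}$.

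For the non-Noetherian statement in the case $a^3=b^3=c^3=1$, I would rescale to $a=1$ and invoke Proposition \ref{prop:S(1bc)2}. Since $S_{deg}$ is free of rank two over $R:=k\{x,y\}$ with basis $\{1,z\}$, the claim is that for any right ideal $J\subseteq R$ one has $JS_{deg}=J\oplus Jz$ and, in particular, $(JS_{deg})\cap R=J$. Verifying this amounts to checking that $J\oplus Jz$ is closed under right multiplication by $S_{deg}$, which follows from the Ore relations $zr=\alpha(r)z+\delta(r)$ together with the identity $z^2=-c^{-1}(xy+byx)\in R$ coming from $\Omega=0$. Any strictly ascending chain of right ideals of the free algebra $R$ (which is well known to exist) then lifts to a strict chain in $S_{deg}$. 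For the three coordinate points $S_{deg}$ is a monomial algebra, and I would instead exhibit an explicit chain---for instance, in $S(1,0,0)=k\{x,y,z\}/(xy,yz,zx)$, take $I_n=\sum_{i<n} y^i x\cdot S_{deg}$ and verify $y^n x\notin I_n$ by inspecting the irreducible monomial basis.

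For the non-domain statement, the monomial cases have immediate zero divisors ($xy=0$ in $S(1,0,0)$, $zy=0$ in $S(0,1,0)$, and $x^2=0$ in $S(0,0,1)$). In the case $a^3=b^3=c^3=1$ (WLOG $a=1$), the plan is to factor the normal element $\Omega=xy+byx+cz^2$ inside the Ore extension $T=k\{x,y\}[z;\alpha,\delta]$. After using the Ore relations $zx=-bxz-cy^2$ and $zy=-b^2yz-b^2cx^2$ to eliminate $zx$ and $zy$ from a generic quadratic product of two linear forms, one matches coefficients against the degree-two monomial basis of $T$ to derive the identity $(x+y+b^2c^2 z)(bx+y+bc^2 z)=\Omega$ in $T$. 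Both factors are nonzero linear forms, so their images in $S_{deg}=T/(\Omega)$ multiply to zero.

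The hardest step will be producing the factorization of $\Omega$: the linear ansatz for the two factors is forced to solve a small but nontrivial bilinear system, which luckily admits a clean closed-form solution in terms of the cube roots $b,c$. Once that identity is in hand, the remaining assertions are routine.
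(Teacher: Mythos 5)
Your proposal is correct, and I checked your key identity: in $T=k\{x,y\}[z;\alpha,\delta]$ one does have $(x+y+b^2c^2z)(bx+y+bc^2z)=xy+byx+cz^2=\Omega$ after using $zx=-bxz-cy^2$, $zy=-b^2yz-b^2cx^2$ and $b^3=c^3=1$, so the images of the two linear forms are zero divisors in $S_{deg}\cong T/(\Omega)$. The paper reaches the same conclusion slightly differently: it exhibits the factorization already in the free algebra, namely $f_1+bf_2+cf_3=(x+by+bc^2z)(cx+cy+b^2z)$, which is essentially your identity before passing to $T$ (your computation amounts to reducing modulo $f_1,f_2$, which are exactly the Ore relations); the paper's version avoids needing Proposition \ref{prop:S(1bc)2} at this point, while yours makes the normal element $\Omega$ the visible culprit. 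The bigger divergence is the non-Noetherian claim: the paper simply cites Stephenson--Zhang \cite[Theorem 0.1]{SteZ}, which says a connected graded Noetherian algebra has subexponential growth, so exponential growth (from Remark \ref{rmk:S(1bc)}) kills Noetherianity in all cases at once. Your route is more elementary and self-contained: the lemma $JS_{deg}=J\oplus Jz$ with $(JS_{deg})\cap R=J$ is sound, since $S_{deg}$ is free over $R=k\{x,y\}$ with basis $\{1,z\}$, $Jz\cdot r\subseteq J\alpha(r)z+J\delta(r)$ and $Jz\cdot z\subseteq JR\subseteq J$ because $z^2\in R$ in $S_{deg}$; lifting a strict chain from the non-Noetherian free algebra then works. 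The only cost is case analysis: for the coordinate points you write out a chain only for $S(1,0,0)$, and $S(0,0,1)=k\{x,y,z\}/(x^2,y^2,z^2)$ needs a slightly different pattern (e.g.\ right ideals generated by $(xy)^iz$), so you should either spell out all three monomial cases or reduce them to one another; alternatively you could adopt the paper's one-line appeal to growth. In exchange, your argument needs no external growth theorem and produces explicit infinite ascending chains, which is a small gain in concreteness.
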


\begin{proof}
The growth conditions follow from Remark \ref{rmk:S(1bc)} and the non-Noetherian property holds by \cite[Theorem 0.1]{SteZ}. Moreover if $[a:b:c] \in \{[1:0:0],$\linebreak $[0:1:0],~[0:0:1]\},$ then the monomial algebra $S(a,b,c)$ is obviously not a domain. On the other hand if $[a:b:c]$ satisfies $a^3 = b^3 = c^3=1$, then assume without loss of generality that $a=1$. As a result we have
$$f_1 + b f_2 + c f_3 ~=~ (x+by+bc^2z)(cx+cy+b^2z),$$
where $f_1=yz+bzy+cx^2$, $f_2=zx+bxz+cy^2$, and $f_3=xy+byx+cz^2$ are the relations of $S(1,b,c)$.
\end{proof}

Now we verify homological properties of degenerate Sklyanin algebras.

\begin{defn} \label{def:Koszul} Let $A$ be a cg algebra which is locally finite (dim$_k A_i < \infty$). When provided a minimal resolution of the left $A$-module $A/\bigoplus_{i \geq 1} A_i \cong k$ determined by matrices $M_i$, we say $A$ is {\it Koszul} if the entries of the $M_i$ all belong to $A_1$. 
\end{defn}

\begin{prop} \label{prop:kos,gldim} The degenerate Sklyanin algebras are Koszul with infinite global dimension.
\end{prop}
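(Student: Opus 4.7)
The plan is to prove the two assertions in tandem: first establish Koszulity by showing $S_{deg}$ is a PBW algebra in Priddy's sense (i.e.\ its quadratic relations form a noncommutative Gr\"obner basis), and then deduce infinite global dimension from the Koszul-duality Hilbert-series identity $H_A(t)\cdot H_{A^!}(-t)=1$ together with the fact that, for a Koszul algebra $A$, $\mathrm{Ext}^d_A(k,k)\cong (A^!_d)^{\ast}$.

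For the PBW step I would split along the two components of $\mf D$. When $[a:b:c]\in\{[1:0:0],[0:1:0],[0:0:1]\}$ the three defining relations are themselves monomials, so they trivially form a quadratic Gr\"obner basis with respect to any admissible order, and Koszulity is immediate from Priddy's theorem. In the cubic-root case $a^3=b^3=c^3=1$, I would fix the degree-lexicographic order with $z>y>x$, so that the leading terms of the three defining relations are $zy$, $zx$, and $z^{2}$. The only overlap ambiguities are then $z^{2}y$, $z^{2}x$, and $z^{3}$, yielding degree-three $S$-polynomials which one verifies reduce to zero using $a^{3}=b^{3}=c^{3}=1$. This is exactly the Diamond-lemma computation indicated in Remark \ref{rmk:S(1bc)}: the assertion there that $\{w,wz:w\in\{x,y\}^{\ast}\}$ is a $k$-basis of $S_{deg}$ (forcing $H_{S_{deg}}(t)=\tfrac{1+t}{1-2t}$) is equivalent to the defining relations forming a quadratic Gr\"obner basis, and Priddy's theorem then delivers Koszulity.

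With Koszulity in hand, inverting the Hilbert series gives
\[
H_{S_{deg}^{!}}(t)\;=\;\frac{1}{H_{S_{deg}}(-t)}\;=\;\frac{1+2t}{1-t}\;=\;1+3t+3t^{2}+3t^{3}+\cdots,
\]
which is not a polynomial, so $S_{deg}^{!}$ is infinite dimensional with $(S_{deg}^{!})_{d}\neq 0$ for every $d\geq 1$. Consequently $\mathrm{Ext}^{d}_{S_{deg}}(k,k)\neq 0$ for all $d\geq 1$, so the global dimension of $S_{deg}$ is infinite. The main technical hurdle is the overlap check in the cubic-root case, but it is a bounded degree-three computation subsumed by the Diamond-lemma argument already invoked in Remark \ref{rmk:S(1bc)}, so it poses no real difficulty.
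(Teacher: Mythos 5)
Your proposal is correct, but it reaches both conclusions by a genuinely different route than the paper. For Koszulity in the case $a^{3}=b^{3}=c^{3}=1$, the paper does not use a Gr\"obner/PBW argument: it invokes the Ore-extension presentation $S(1,b,c)\cong k\{x,y\}[z;\alpha,\delta]/(\Omega)$ of Proposition \ref{prop:S(1bc)2}, cites Cassidy--Shelton to get Koszulity of the Ore extension and Shelton--Tingey to pass to the quotient by the regular normal element $\Omega$; you instead check directly that with deg-lex $z>y>x$ the relations have leading words $zy,zx,z^{2}$ and that the three overlaps $z^{2}y,z^{2}x,z^{3}$ resolve (this does use $b^{3}=c^{3}=1$, and is indeed equivalent, via the Diamond Lemma, to the dimension count $\dim_k S_d=3\cdot 2^{d-1}$ of Remark \ref{rmk:S(1bc)}), so Priddy's theorem applies. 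For infinite global dimension, the paper writes down $S^{!}$ explicitly by generators and relations and exhibits an infinite monomial basis before citing the fact that a Koszul algebra with infinite-dimensional dual has infinite global dimension; you avoid computing $S^{!}$ altogether by inverting the Hilbert series, $H_{S^{!}_{deg}}(t)=1/H_{S_{deg}}(-t)=(1+2t)/(1-t)$, which is nonzero in every degree, and then using $\mathrm{Ext}^{d}(k,k)\cong (S^{!}_d)^{\ast}$. Your route is more elementary and self-contained (only the Diamond Lemma, Priddy, and standard Koszul duality, with the Hilbert series of Remark \ref{rmk:S(1bc)} doing double duty), whereas the paper's route yields extra structural payoff: the Ore-extension-with-normal-element description and an explicit presentation and monomial basis of $S^{!}$. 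The monomial cases $[1:0:0],[0:1:0],[0:0:1]$ are handled the same way in both arguments, and your Hilbert-series step covers them uniformly since they share the series $\frac{1+t}{1-2t}$.
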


\begin{proof} For $S=S(a,b,c)$ with $a^3=b^3=c^3=1$, consider the description of $S$ in Proposition \ref{prop:S(1bc)2}. Since $k\{x,y\}$ is Koszul, the Ore extension $k\{x,y\}[z, \alpha, \delta]$ is also Koszul \cite[Definition 1.1, Theorem 10.2]{CSh}. By Proposition \ref{prop:S(1bc)2}, the element $\Omega$ is normal and regular in $k\{x,y\}[z;\alpha, \delta]$. Hence the factor $S$ is Koszul by \cite[Theorem 1.2]{ShT}. 

To conclude gl.dim($S) = \infty$, note that the Koszul dual of $S$ is 

$$S(1,b,c)^{!} \cong \frac{k\{x,y,z\}}{\left(\begin{array}{ccc} 
z^2 - cxy, && yz -c^2x^2,\\ zy - b^2yz, && y^2-bcxz,\\ zx-bxz, && yx-b^2xy \end{array}\right)}.$$

\noindent Taking the ordering $x < y < z$, we see that all possible ambiguities of $S^{!}$ are resolvable in the sense of \cite{Be}. Bergman's Diamond lemma \cite[Theorem 1.2]{Be} implies  that $S^{!}$ has a basis of irreducible monomials $\{x^i, x^jy, x^kz\}_{i,j,k \in \N}$. Hence $S^{!}$ is not a finite dimensional $k$-vector space and by \cite[Corollary 5]{Kr}, $S$ has infinite global dimension.

For $S=S(a,b,c)$ with $[a:b:c] \in \{[1:0:0],[0:1:0],[0:0:1]\}$,
note that $S$ is Koszul as its ideal of relations is generated by quadratic monomials \cite[Corollary 4.3]{PP}. Denote these monomials $m_1$, $m_2$, $m_3$.
The Koszul dual of $S$ in this case is

$$S^{!} \cong \frac{k\{x,y,z\}}{\left(\text{the six monomials not equal to~}m_i\right)}.$$

\noindent Since $S^!$ is again a monomial algebra, it contains no hidden relations and has a nice basis of irreducible monomials. In particular, $S^!$ contains $\bigoplus_{i\geq 0} kw_i$ where $w_i$ is the length $i$ word:

\[
w_i =
\begin{cases}
\underbrace{xyzxyzx\dots}_{i}, &\text{if~} [a:b:c] = [1:0:0]\\
\underbrace{xzyxzyx\dots}_{i}, &\text{if~} [a:b:c] = [0:1:0]\\
x^i, &\text{if~} [a:b:c] = [0:0:1].
\end{cases}
\]

\noindent Therefore $S^{!}$ is not a finite dimensional $k$-vector space. By \cite[Corollary 5]{Kr}, the three remaining degenerate Sklyanin algebras are of infinite global dimension.
\end{proof}


\section{Truncated point schemes of $S_{deg}$}

\noindent The goal of this section is to construct the family of truncated point schemes $\{V_d \subseteq (\mathbb{P}^2)^{\times d} \}$ associated to the degenerate three-dimensional Sklyanin algebras $S_{deg}$ (see Definition \ref{def:Sdeg}). These schemes will be used in $\S4$ for the construction of a generalized twisted homogeneous coordinate ring, namely the point parameter ring (Definition \ref{def:ptparam}). Nevertheless the family $\{V_d\}$ has immediate importance for understanding point modules over $S=S_{deg}$.

\begin{defn} \label{def:truncptmod}
A graded left $S$-module $M$ is called a {\it point module} if $M$ is cyclic and $H_M(t) = \sum_{i=0}^{\infty} t^i = \frac{1}{1-t}$. Moreover a graded left $S$-module $M$ is called a {\it truncated point module of length} $d$ if $M$ is again cyclic and $H_M(t) = \sum_{i=0}^{d-1} t^i$.
\end{defn}

Note that point modules share the same Hilbert series as a point in projective space in Classical Algebraic Geometry.

Now we proceed to construct schemes $V_d$ that will parameterize length $d$ truncated point modules. This yields information regarding point modules over $S(a,b,c)$ for {\it any} $[a:b:c] \in \mathbb{P}^2$ due to the following result. 

\begin{lem} \cite[Proposition 3.9, Corollary 3.13]{ATV1} \label{lem:gamma} Let $S = S(a,b,c)$ for any $[a:b:c] \in \mathbb{P}^2$.
Denote by $\Gamma$ the set of isomorphism classes of point modules over $S$ and 
$\Gamma_d$ the set of isomorphism classes of truncated point modules of length $d+1$. With respect to the truncation function $\rho_d: \Gamma_d \ra \Gamma_{d-1}$ given by $M \mapsto M/M_{d+1}$, we have that $\Gamma$ is the projective limit of $\{\Gamma_d\}$ as a set.  
\end{lem}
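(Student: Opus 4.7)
The plan is to encode point modules as sequences of points in $\mathbb{P}^2$ and then observe that the truncation maps become coordinate projections. Fix $S = S(a,b,c)$. For a point module $M$ with chosen generator $m_0 \in M_0$, each $M_i$ is one-dimensional, so one may pick any $m_i \in M_i \setminus 0$. Multiplication by $S_1$ furnishes, for each $i$, a nonzero linear functional $\ell_i : S_1 \to k$ via $s \cdot m_i = \ell_i(s)\, m_{i+1}$. Rescaling $m_i$ or $m_{i+1}$ rescales $\ell_i$, so the projective class $p_i = [\ell_i] \in \mathbb{P}(S_1^{\ast}) \cong \mathbb{P}^2$ is canonically attached to $M$. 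This assignment $M \mapsto (p_0, p_1, p_2, \dots)$ lands inside the subset $\mathcal{V} \subseteq (\mathbb{P}^2)^{\mathbb{N}}$ cut out by the bihomogeneous conditions forced by the defining quadratic relations of $S$: each $r \in S_2$ must act by zero on every $m_i$, which translates into a condition on the consecutive pair $(p_i, p_{i+1})$.

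Next I would check that this assignment descends to a bijection on isomorphism classes. For injectivity, if $(M,m_0)$ and $(M',m_0')$ produce the same sequence $(p_i)$, write the chosen representatives as $\ell_i' = c_i \ell_i$ with $c_i \in k^{\times}$; then the recursion $d_{i+1} = c_i d_i$ with $d_0 = 1$ defines scalars so that $\phi : M \to M'$, $m_i \mapsto d_i m_i'$, is a well-defined $S$-module isomorphism. For surjectivity, any admissible sequence $(p_0,p_1,\dots) \in \mathcal{V}$ can be realised by picking any representatives $\ell_i$ and declaring $M := \bigoplus_{i \geq 0} k\,m_i$ with action $s \cdot m_i := \ell_i(s) m_{i+1}$; the admissibility condition is precisely what is needed for the defining relations of $S$ to annihilate every $m_i$. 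Repeating this analysis for truncated point modules of length $d+1$ identifies $\Gamma_d$ with the subset $\mathcal{V}_d \subseteq (\mathbb{P}^2)^{\times d}$ cut out by the admissibility conditions involving indices $0, 1, \dots, d-1$.

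Under these identifications, the truncation map $\rho_d$ becomes the coordinate projection $(\mathbb{P}^2)^{\times d} \to (\mathbb{P}^2)^{\times (d-1)}$ that forgets the last factor, so $\mathcal{V} = \varprojlim_d \mathcal{V}_d$ tautologically: a compatible system of finite sequences is exactly an infinite sequence satisfying every admissibility condition. Combining with the bijections of the previous paragraph gives $\Gamma = \varprojlim_d \Gamma_d$ as sets. The main obstacle I expect is the bookkeeping of scaling ambiguities when one tries to glue a compatible system of truncated modules directly into a single point module; working projectively from the outset, with the $p_i$'s rather than the $\ell_i$'s, is what makes this issue dissolve and reduces the lemma to the tautological statement that an inverse limit of coordinate projections is the set of compatible sequences.
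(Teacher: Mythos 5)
Your argument is correct: encoding a (truncated) point module by the sequence of projectivized functionals $[\ell_i]\in\mathbb{P}(S_1^{\ast})\cong\mathbb{P}^2$ arising from the $S_1$-action (cyclicity plus generation of $S$ in degree one forces each $\ell_i\neq 0$), translating the defining relations into the multilinearized conditions on consecutive pairs $(p_i,p_{i+1})$, and observing that truncation then becomes the coordinate projection forgetting the last factor, reduces the lemma to the tautology that an inverse limit along projections is the set of compatible sequences. The paper gives no proof of this lemma, citing \cite{ATV1} instead, and your argument is essentially the standard one underlying that reference, with the genuinely delicate points (independence of the chosen bases, hence well-definedness on isomorphism classes, and gluing a compatible system of truncations via the projective data rather than via module-level choices) handled correctly.
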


The sets $\Gamma_d$ can be understood by the schemes $V_d$ defined below.

\begin{defn}\cite[\S3]{ATV1} \label{def:truncptsch} The {\it truncated point scheme of length d},
$V_d \subseteq (\mathbb{P}^2)^{\times d}$, is the scheme defined by the  multilinearizations of relations of $S(a,b,c)$ from Definition \ref{def:Skl3}. More precisely $V_d = \V(f_i, g_i, h_i)_{0 \leq i \leq d-2}$ where 
\begin{equation} \label{eq:multilin,rel}
\begin{aligned}
f_i &= ay_{i+1}z_i + b z_{i+1}y_i + cx_{i+1}x_i\\
g_i &= az_{i+1}x_i + b x_{i+1}z_i + cy_{i+1}y_i\\
h_i &= ax_{i+1}y_i + b y_{i+1}x_i + cz_{i+1}z_i.
\end{aligned}
\end{equation}
For example, $V_1 = \mathbb{V}(0) \subseteq \mathbb{P}^2$ so we have $V_1 = \mathbb{P}^2$.  Similarly, $V_2 = \mathbb{V}(f_0,g_0,h_0) \subseteq \mathbb{P}^2 \times \mathbb{P}^2$.
\end{defn}

\begin{lem} \cite{ATV1} \label{lem:gammad}
The set $\Gamma_d$ is parameterized by the scheme $V_d$.
\end{lem}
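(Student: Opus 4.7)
The plan is to construct an explicit bijection between $\Gamma_d$ and $V_d(k)$ and then to promote it to the scheme-theoretic parameterization asserted in the lemma, as in \cite{ATV1}. Given an isomorphism class $[M] \in \Gamma_d$ with $M = \bigoplus_{i=0}^{d} M_i$ and cyclic generator $m_0 \in M_0$, I would choose a basis vector $m_i$ of each one-dimensional component $M_i$ and extract scalars $\alpha_i, \beta_i, \gamma_i \in k$ from
\[
x \cdot m_i = \alpha_i m_{i+1}, \qquad y \cdot m_i = \beta_i m_{i+1}, \qquad z \cdot m_i = \gamma_i m_{i+1}, \qquad 0 \leq i \leq d-1.
\]
Since $M_{i+1} = S_1 \cdot M_i$ is nonzero, not all three of $\alpha_i, \beta_i, \gamma_i$ vanish, and the tuple
\[
p_M := \bigl([\alpha_0 : \beta_0 : \gamma_0],\, \ldots,\, [\alpha_{d-1} : \beta_{d-1} : \gamma_{d-1}]\bigr) \in (\mathbb{P}^2)^{\times d}
\]
is independent of the choice of bases and of the representative of $[M]$: rescaling $m_i \mapsto \lambda_i m_i$ multiplies each coordinate of the $i$-th factor by a common nonzero scalar, and a graded isomorphism of modules is nothing but such a rescaling in each degree.

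The next step is to check that $p_M$ actually lies in $V_d$. Applying the relation $ayz + bzy + cx^2 = 0$ in $S$ to $m_i$ yields
\[
0 \;=\; a\beta_{i+1}\gamma_i\, m_{i+2} + b\gamma_{i+1}\beta_i\, m_{i+2} + c\alpha_{i+1}\alpha_i\, m_{i+2} \;=\; f_i(p_M) \cdot m_{i+2}
\]
for $0 \leq i \leq d-2$. Because $m_{i+2} \neq 0$, I conclude $f_i(p_M) = 0$, and the analogous computations with the remaining two Sklyanin relations give $g_i(p_M) = h_i(p_M) = 0$, so $p_M \in V_d(k)$.

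Conversely, given $p = (p_0, \ldots, p_{d-1}) \in V_d(k)$, I would choose affine representatives $p_i = [\alpha_i : \beta_i : \gamma_i]$, form the graded vector space $M = \bigoplus_{i=0}^{d} k m_i$, and declare $x \cdot m_i,\, y \cdot m_i,\, z \cdot m_i$ to be $\alpha_i m_{i+1},\, \beta_i m_{i+1},\, \gamma_i m_{i+1}$ respectively (with $S_1 \cdot m_d = 0$). The hypothesis $f_i(p) = g_i(p) = h_i(p) = 0$ is precisely the statement that the three Sklyanin relations annihilate each $m_i$, so this extends to a well-defined $S$-action; and since at each stage some $\alpha_i, \beta_i$ or $\gamma_i$ is nonzero, $M$ is cyclic on $m_0$ and hence a truncated point module of length $d+1$. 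Different affine lifts produce isomorphic modules, so $[M]$ is well-defined, and the two assignments $[M] \mapsto p_M$ and $p \mapsto [M]$ are manifestly mutually inverse.

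The main obstacle is not this bijection on closed points, which is a routine unwinding of definitions, but the upgrade to a scheme-theoretic parameterization. For this one must interpret $V_d$ as corepresenting the functor that sends a commutative $k$-algebra $R$ to isomorphism classes of flat families over $\Spec R$ of length $d+1$ truncated point $(S \otimes_k R)$-modules; the multilinear polynomials $f_i, g_i, h_i$ then encode the universal constraints imposed by the Sklyanin relations on a locally free rank-one graded quotient of $S \otimes_k R$. These flatness and representability checks are carried out in \cite[\S3]{ATV1}, from which the lemma follows.
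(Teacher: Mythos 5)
Your proposal is correct and takes essentially the same route as the paper, which states this lemma as a citation to \cite{ATV1} and, in the discussion around (\ref{eq:Saction}), sketches exactly the correspondence you spell out: the scalars of the $S_1$-action on a cyclic graded module with one-dimensional components give a point of $(\mathbb{P}^2)^{\times d}$ killed by the multilinearized relations $f_i,g_i,h_i$, and conversely. Deferring the scheme-theoretic (functorial/representability) content to \cite[\S 3]{ATV1} is likewise what the paper does.
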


In short, to understand point modules over $S(a,b,c)$ for any $[a:b:c] \in \mathbb{P}^2$, Lemmas \ref{lem:gamma} and \ref{lem:gammad} imply that we can now restrict our attention to truncated point schemes $V_d$. 

On the other hand, we point out another useful result pertaining to $V_d$ associated to $S(a,b,c)$ for any $[a:b:c] \in \mathbb{P}^2$.

\begin{lem} \label{lem:V_dinE^d} The truncated point scheme $V_d$ lies in $d$ copies of $E \subseteq \mathbb{P}^2$
where $E$ is the cubic curve
$E: (a^3+b^3+c^3)xyz - (abc)(x^3+y^3+z^3) =0$.
 \end{lem}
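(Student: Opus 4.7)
The plan is to exploit the bilinearity of the multilinearized relations in (\ref{eq:multilin,rel}) and rewrite them as matrix equations whose vanishing determinants will yield the defining equation of $E$.

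First I would observe that for each index $0 \leq i \leq d-2$, the three relations $f_i, g_i, h_i$ are simultaneously linear in the triple $(x_{i+1}, y_{i+1}, z_{i+1})$ with coefficients depending on $(x_i, y_i, z_i)$. Collecting these as a single matrix equation, one has
\[
M_i \cdot (x_{i+1}, y_{i+1}, z_{i+1})^T = 0, \qquad M_i = \begin{pmatrix} cx_i & az_i & by_i \\ bz_i & cy_i & ax_i \\ ay_i & bx_i & cz_i \end{pmatrix}.
\]
Symmetrically, the same three relations are linear in $(x_i, y_i, z_i)$ and so also can be packaged as $M'_{i+1} \cdot (x_i, y_i, z_i)^T = 0$ for a $3\times 3$ matrix $M'_{i+1}$ with entries depending on $(x_{i+1}, y_{i+1}, z_{i+1})$ (obtained from $M_i$ by interchanging the roles of the two coordinate patches, via the cyclic symmetry of the relations in $x,y,z$).

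Next, at any $k$-point of $V_d$, each triple $(x_j:y_j:z_j)$ is a well-defined point of $\mathbb{P}^2$, hence is a nonzero vector in $k^3$. Thus the linear systems $M_i v_{i+1}^T = 0$ and $M'_{i+1} v_i^T = 0$ admit nonzero solutions, forcing $\det(M_i) = 0$ and $\det(M'_{i+1}) = 0$ for every $0\le i \le d-2$. A short computation, which I will not grind out here, shows
\[
\det(M_i) = (a^3+b^3+c^3)x_i y_i z_i - abc(x_i^3+y_i^3+z_i^3),
\]
so this is precisely the defining polynomial of $E$ evaluated at the $i$-th factor. By the same calculation (or by invoking the cyclic symmetry), $\det(M'_{i+1})$ is the same cubic evaluated at the $(i+1)$-th factor.

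Finally, to conclude, I would run $i$ over $0,1,\dots,d-2$ using the identity $\det(M_i) = 0$ to show each projection $pr_i : V_d \to \mathbb{P}^2$ factors through $E$ for $0 \le i \le d-2$, and then use $\det(M'_{d-1}) = 0$ (coming from the $i = d-2$ relation, which is the only one constraining the last factor) to handle the remaining projection $pr_{d-1}$. This gives $V_d \subseteq E^{\times d} \subset (\mathbb{P}^2)^{\times d}$ as required. The only mildly tricky point is remembering to treat the last factor separately via the transposed matrix equation, since the relations $f_i,g_i,h_i$ are only indexed up to $i = d-2$; otherwise the argument is a direct determinantal calculation.
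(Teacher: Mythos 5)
Your proposal is correct and follows essentially the same route as the paper: the paper also packages the bilinear relations $f_i, g_i, h_i$ into the matrix $\mathbb{M}_{abc,i}$, reads them both as $\mathbb{M}_{abc,j}\cdot(x_{j+1}, y_{j+1}, z_{j+1})^T = 0$ and as $(x_j, y_j, z_j)\cdot\mathbb{M}_{abc,j+1} = 0$ (your $M'_{i+1}$ is just the transpose of this second reading), and concludes $\det(\mathbb{M}_{abc,j}) = 0$ for all $0 \le j \le d-1$, which is exactly the cubic defining $E$ at each factor. Your care in handling the last factor $p_{d-1}$ separately via the second reading matches the paper's argument precisely.
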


\begin{proof} Let $p_i$ denote the point $[x_i : y_i : z_i] \in \mathbb{P}^2$ and
\begin{equation} \label{eq:Mabc}
\mathbb{M}_{abc, i} := \mathbb{M}_{i} := 
{\footnotesize \left(\begin{array}{ccc} 
cx_i & az_i & by_i\\ 
bz_i & cy_i & ax_i\\ 
ay_i & bx_i & cz_i
\end{array}\right)} 
\in \text{Mat}_3(kx_i \oplus ky_i \oplus kz_i).
\end{equation}

\noindent A $d$-tuple of points $p = (p_0, p_1, \dots, p_{d-1}) \in V_d \subseteq (\mathbb{P}^2)^{\times d}$ must satisfy the system $f_i = g_i = h_i = 0$ for $0\leq i \leq d-2$ by definition of $V_d$. In other words, one is given
$\mathbb{M}_{abc, j} \cdot (x_{j+1}~~y_{j+1}~~z_{j+1})^T = 0$ or equivalently $(x_{j}~~y_{j}~~z_{j}) \cdot \mathbb{M}_{abc, j+1}= 0$ 
for $0 \leq j \leq d-2$. Therefore for $0 \leq j \leq d-1$, $\det(\mathbb{M}_{abc, j}) = 0$. This implies $p_j \in E$ for each $j$. Thus $p \in E^{\times d}$.
\end{proof}


\subsection{On the truncated point schemes of some $S_{deg}$}

We will show that to study the truncated point schemes $V_d$ of degenerate Sklyanin algebras, it suffices to understand the schemes of specific four degenerate Sklyanin algebras. Recall that $V_d$ parameterizes length $d$ truncated point modules (Lemma \ref{lem:gammad}). Moreover
note that according to \cite{Zh}, two graded algebras $A$ and $B$ have equivalent graded left module categories ($A$-Gr and $B$-Gr) if $A$ is a Zhang twist of $B$. The following is a special case of \cite[Theorem 1.2]{Zh}.

\begin{thm}\label{thm:Zhang} 
Given a $\Z$-graded $k$-algebra $S= \bigoplus_{n \in \Z}S_n$ with graded automorphism $\sigma$ of degree 0 on $S$, we form a {\it Zhang twist} $S^{\sigma}$ of $S$ by preserving the same additive structure on $S$ and defining multiplication $\ast$ as follows:
$a \ast b = ab^{\sigma^n}$ for $a \in S_n$. Furthermore if $S$ and $S^{\sigma}$ are cg and generated in degree one, then $S$-Gr and $S^{\sigma}$-Gr are equivalent categories. \qed
\end{thm}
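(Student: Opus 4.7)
The statement is flagged in the text as a specialization of \cite[Theorem 1.2]{Zh}, so my plan is to invoke Zhang's general result after verifying that its hypotheses hold here. Zhang's theorem is stated relative to a \emph{twist system} $\{\tau_n\}_{n \in \Z}$ of graded $k$-linear automorphisms of $S$ satisfying a cocycle-type condition $\tau_n \circ \tau_m = \tau_{n+m}$ on elements of degree $m$; the case treated in the proposition is recovered by setting $\tau_n = \sigma^n$, and the cocycle condition is automatic because $\sigma$ is a degree-zero algebra automorphism.

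If instead one wants a self-contained argument, the first step is to verify that $\ast$ turns $S$ into an associative $\Z$-graded $k$-algebra $S^\sigma$. For $a \in S_n$, $b \in S_m$, $c \in S_p$, a direct calculation gives
\begin{equation*}
(a \ast b) \ast c \;=\; (a \sigma^n(b))\, \sigma^{n+m}(c) \;=\; a\, \sigma^n\bigl(b\sigma^m(c)\bigr) \;=\; a \ast (b \ast c),
\end{equation*}
where the middle equality uses that $\sigma$ is an algebra endomorphism. Since $\sigma^0 = \mathrm{id}$, the element $1 \in S_0$ remains a two-sided unit for $\ast$, and since $\sigma$ has degree zero the $\Z$-grading of $S$ is preserved.

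The second step is to construct the category equivalence $F : S\text{-Gr} \to S^\sigma\text{-Gr}$. On objects, $F$ sends a graded left $S$-module $M = \bigoplus_p M_p$ to the graded vector space $M$ equipped with a twisted $S^\sigma$-action in which appropriate powers of $\sigma$ (determined by the degree of the module element) are inserted into the original $S$-action, so that the twisted associativity $(a \ast b) \cdot_\sigma m = a \cdot_\sigma (b \cdot_\sigma m)$ holds; on morphisms $F$ acts as the identity on underlying maps. A quasi-inverse $G : S^\sigma\text{-Gr} \to S\text{-Gr}$ is provided by the Zhang twist with respect to the inverse automorphism $\sigma^{-1}$, which is well defined precisely because $\sigma$ is an automorphism, and one checks that $GF$ and $FG$ are naturally isomorphic to the respective identity functors. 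The cg and degree-one-generated hypotheses enter only to guarantee that $F$ and $G$ restrict to the graded module categories in the form used later in the paper.

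The only real obstacle, should one wish to expand this into a full proof, is the careful book-keeping of twist exponents in the module action; no deeper obstruction arises, because the cocycle condition on $\{\sigma^n\}$ and the invertibility of $\sigma$ do all of the structural work.
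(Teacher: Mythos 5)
Your proposal is correct and follows essentially the same route as the paper, which gives no argument of its own but simply cites \cite[Theorem 1.2]{Zh}, exactly as your plan does by specializing Zhang's twisting system to $\tau_n = \sigma^n$. Your additional sketch (associativity of $\ast$, the twist functor with quasi-inverse given by twisting by $\sigma^{-1}$) is the standard verification underlying Zhang's result and contains no errors.
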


Realize $\mf{D}$ from Definition \ref{def:Skl3} as the union of three point sets $Z_i$:
\begin{equation} \label{eq:Z_i}
\begin{array}{lll}
Z_1 := \{[1:1:1], &[1:\zeta:\zeta^2], &[1:\zeta^2:\zeta]\},\\
Z_2 := \{[1:1:\zeta], &[1:\zeta:1], &[1:\zeta^2:\zeta^2]\},\\
Z_3 := \{[1:\zeta:\zeta], &[1:1:\zeta^2], &[1:\zeta^2:1]\},\\
Z_0 := \{[1:0:0], &[0:1:0], &[0:0:1]\}.
\end{array}
\end{equation}
\noindent where $\zeta = e^{2 \pi i /3}$.
Pick respective representatives $[1:1:1]$, $[1:1: \zeta]$, $[1: \zeta: \zeta]$, and $[1:0:0]$ of $Z_1$, $Z_2$, $Z_3$, and $Z_0$. 

\begin{lem} \label{lem:fourSdeg} Every degenerate Sklyanin algebra is a Zhang twist of one the following algebras: $S(1,1,1)$, $S(1,1,\zeta)$, $S(1,\zeta, \zeta)$, and $S(1,0,0)$.
\end{lem}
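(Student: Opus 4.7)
The plan is to exhibit, for each of the eight degenerate Sklyanin algebras that is not one of the four listed representatives, a graded degree-zero automorphism $\sigma$ of the corresponding representative whose Zhang twist reproduces the desired algebra. I split the argument according to the orbit decomposition $\mf{D} = Z_0 \cup Z_1 \cup Z_2 \cup Z_3$ from (\ref{eq:Z_i}), treating the monomial orbit $Z_0$ separately from the three ``cube root of unity'' orbits $Z_1$, $Z_2$, $Z_3$.

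For $Z_1$, $Z_2$, $Z_3$ I would use diagonal twists. First I record the general principle that if $S = T(V)/(R)$ is a quadratic algebra with $V = kx \oplus ky \oplus kz$ and $\sigma$ is a graded degree-zero automorphism, then the definition $a \ast b = a \cdot \sigma(b)$ in degree one gives $S^{\sigma} = T(V)/(R^{\sigma})$ with $R^{\sigma} = (\textrm{id} \ten \sigma^{-1})(R)$. I then take $\sigma(x) = \alpha x$, $\sigma(y) = \beta y$, $\sigma(z) = \gamma z$, which is automatically a graded algebra automorphism of $S(a,b,c)$ when $a^3 = b^3 = c^3 = 1$. Applying $(\textrm{id} \ten \sigma^{-1})$ to the three Sklyanin relations (\ref{eq:S(abc)}) and requiring the output to again be of Sklyanin form $S(a',b',c')$ forces the three coefficient vectors $(a\gamma^{-1}, b\beta^{-1}, c\alpha^{-1})$, $(a\alpha^{-1}, b\gamma^{-1}, c\beta^{-1})$, $(a\beta^{-1}, b\alpha^{-1}, c\gamma^{-1})$ to be mutually proportional in $\mathbb{P}^2$. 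A short projective calculation shows this is equivalent to $(\alpha : \beta : \gamma) = (1 : \mu : \mu^2)$ with $\mu^3 = 1$, and under this constraint one reads off $S(a,b,c)^{\sigma} \cong S(a, b\mu, c\mu^2)$. Substituting $\mu = 1, \zeta, \zeta^2$ starting from each of $S(1,1,1)$, $S(1,1,\zeta)$, $S(1,\zeta,\zeta)$ in turn then exhausts $Z_1$, $Z_2$, and $Z_3$ respectively (a direct verification in projective coordinates).

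For $Z_0 = \{[1:0:0], [0:1:0], [0:0:1]\}$, diagonal twists only rescale the monomial relations and cannot interchange them, so I would instead use the cyclic permutation $\sigma: x \mapsto y,\; y \mapsto z,\; z \mapsto x$. This is a graded algebra automorphism of $S(1,0,0) = k\{x,y,z\}/(yz, zx, xy)$ because it cyclically permutes the defining relations. Using $\sigma^{-1}: x \mapsto z$, $y \mapsto x$, $z \mapsto y$, the calculation $(\textrm{id} \ten \sigma^{-1})\{yz, zx, xy\} = \{y^2, z^2, x^2\}$ yields $S(1,0,0)^{\sigma} \cong S(0,0,1)$, and the analogous calculation with $\sigma^{-2}$ yields $S(1,0,0)^{\sigma^2} \cong S(0,1,0)$.

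The main obstacle is identifying the precise diagonal twist constraint $(\alpha:\beta:\gamma) = (1:\mu:\mu^2)$ with $\mu^3 = 1$ that keeps the twist inside the Sklyanin family; once that is in hand, the remaining steps reduce to short projective or monomial computations, and the $Z_0$ case is handled separately via the easy observation that a cyclic permutation of $x,y,z$ is an automorphism of the monomial algebra $S(1,0,0)$.
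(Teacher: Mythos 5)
Your proposal is correct and takes essentially the same route as the paper: the paper twists by the diagonal automorphism $\sigma: x\mapsto \zeta x,\ y\mapsto \zeta^2 y,\ z\mapsto z$ (a member of your $(1:\mu:\mu^2)$ family, up to an irrelevant global scalar) to sweep out $Z_1$, $Z_2$, $Z_3$, and by the cyclic permutation $\tau: x\mapsto y,\ y\mapsto z,\ z\mapsto x$ for $Z_0$, recording the resulting twists as a routine computation. One small correction: an arbitrary diagonal map is \emph{not} automatically an automorphism of $S(a,b,c)$ when $a^3=b^3=c^3=1$ — preservation of the relation space already forces $(\alpha:\beta:\gamma)=(1:\mu:\mu^2)$ with $\mu^3=1$ — but since this is exactly the constraint you derive anyway, the twists you actually use are by genuine graded automorphisms and your argument is unaffected.
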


\begin{proof} A routine computation shows that the following graded automorphisms of degenerate $S(a,b,c)$,
\begin{center}
$\sigma: \{x \mapsto \zeta x, ~y \mapsto \zeta^2 y, ~z \mapsto z\}$
~~and~~
$\tau: \{x \mapsto y, ~y \mapsto z,  ~z \mapsto x\},$
\end{center}
yield the Zhang twists:
\begin{center}
$\begin{array}{lllll}
S(1,1,1)^{\sigma} = S(1,\zeta,\zeta^2), && S(1,1,1)^{\sigma^{-1}}=S(1,\zeta^2, \zeta) && \text{for~} Z_1;\\
S(1,1,\zeta)^{\sigma} = S(1,\zeta,1), && S(1,1,\zeta)^{\sigma^{-1}}=S(1,\zeta^2, \zeta^2) && \text{for~} Z_2;\\
S(1,\zeta,\zeta)^{\sigma} = S(1,\zeta^2,1), && S(1,\zeta,\zeta)^{\sigma^{-1}}=S(1,1, \zeta^2) && \text{for~} Z_3;\\
S(1,0,0)^{\tau} = S(0,1,0), && S(1,0,0)^{\tau^{-1}}=S(0,0,1) && \text{for~} Z_0. 
\end{array}$
\end{center}
\vspace{-.25in}

\end{proof}
Therefore it suffices to study a representative of each of the four classes of degenerate three-dimensional Sklyanin algebras due to Theorem \ref{thm:Zhang}.


\subsection{Computation of $V_d$ for $S(1,1,1)$}

We now compute the truncated point schemes of $S(1,1,1)$ in detail. Calculations for the other three representative degenerate Sklyanin algebras, $S(1,1,\zeta)$, $S(1,\zeta, \zeta)$, $S(1,0,0)$, will follow with similar reasoning. To begin we first discuss how to build a truncated point module $M'$ of length $d$, when provided with a truncated point module $M$ of length $d-1$.

Let us explore the correspondence between truncated point modules and truncated point schemes for a given $d$; say $d \geq 3$. When given a truncated point module $M = \bigoplus_{i=0}^{d-1} M_i \in \Gamma_{d-1}$, multiplication from $S=S(a,b,c)$ is determined by a point $p=(p_0, \dots, p_{d-2}) \in V_{d-1}$ (Definition \ref{def:truncptsch},  (\ref{eq:Mabc})) in the following manner. As $M$ is cyclic, $M_i$ has basis say $\{m_i\}$. Furthermore for $x,y,z \in S$ with $p_i = [x_i:y_i:z_i] \in \mathbb{P}^2$, we get the left $S$-action on $m_i$ determined by $p_i$:
\begin{equation} \label{eq:Saction}
\begin{aligned}
x\cdot m_i = x_i m_{i+1}, && x\cdot m_{d-1} = 0;\\
y\cdot m_i = y_i m_{i+1}, && y\cdot m_{d-1} = 0;\\
z\cdot m_i = z_i m_{i+1}, && z\cdot m_{d-1} = 0.
\end{aligned}
\end{equation}
Conversely given a point $p = (p_0, \dots, p_{d-2}) \in V_{d-1}$, one can build a module $M \in \Gamma_{d-1}$ unique up to isomorphism by reversing the above process. We summarize this discussion in the following remark.

\begin{rmk}\label{con_t.pt.mod}
Refer to notation from Lemma \ref{lem:gamma}. To construct $M' \in \Gamma_{d}$ from $M \in \Gamma_{d-1}$ associated to $p \in V_{d-1}$, we require $p_{d-1} \in \mathbb{P}^2$ such that $p'=(p,p_{d-1}) \in V_d$. 
\end{rmk}

Now we begin to study the behavior of truncated point modules over $S_{deg}$ through the examination of truncated point schemes in the next two lemmas.

\begin{lem} \label{lem:notinZ_i} Let $p=(p_0, \dots, p_{d-2}) \in V_{d-1}$ with $p_{d-2} \not \in Z_i$ (refer to (\ref{eq:Z_i})). Then there exists a unique $p_{d-1} \in Z_i$ so that $p' := (p, p_{d-1}) \in V_d$. \end{lem}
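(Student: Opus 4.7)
The plan is to convert the condition $(p,p_{d-1})\in V_d$ into a kernel problem for the matrix $\mathbb{M}_{1,1,1;\,d-2}$ of (\ref{eq:Mabc}) and then exploit the fact that for $S(1,1,1)$ the cubic $E$ degenerates into three lines whose pairwise intersections are precisely the three points of $Z_1$.

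First I would observe that, since $p=(p_0,\dots,p_{d-2})\in V_{d-1}$ already satisfies $f_i=g_i=h_i=0$ for $0\leq i\leq d-3$, the inclusion $(p,p_{d-1})\in V_d$ is equivalent to the single new batch of relations $f_{d-2}=g_{d-2}=h_{d-2}=0$. Rewriting these as in the proof of Lemma \ref{lem:V_dinE^d}, this becomes the matrix condition
\[
\mathbb{M}_{1,1,1;\,d-2}\cdot (x_{d-1},y_{d-1},z_{d-1})^T=0,
\]
so the task is to describe the projective kernel of $\mathbb{M}_{d-2}$. Since $p_{d-2}\in E$ by Lemma \ref{lem:V_dinE^d}, we have $\det\mathbb{M}_{d-2}=0$, so this kernel is nontrivial.

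Next I would exploit the degeneration of $E$ for the triple $(1,1,1)$: the cubic $3xyz=x^3+y^3+z^3$ factors over $k$ as a product of three linear forms, giving $E=L_1\cup L_2\cup L_3$ with, for example, $L_1:x+y+z=0$, $L_2:x+\zeta y+\zeta^2z=0$, and $L_3:x+\zeta^2y+\zeta z=0$. A direct intersection calculation identifies the three pairwise intersection points $L_i\cap L_j$ with the three elements of $Z_1$. Exploiting the circulant-like symmetry of $\mathbb{M}_{d-2}$, one then checks that $(1,1,1)^T\in\ker\mathbb{M}_{d-2}$ iff $p_{d-2}\in L_1$, and likewise $(1,\zeta^2,\zeta)^T$, $(1,\zeta,\zeta^2)^T$ correspond to $L_2$, $L_3$; so the three candidate kernel classes are precisely the three points of $Z_1$.

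Finally, under the hypothesis $p_{d-2}\notin Z_1$, the point $p_{d-2}$ lies on exactly one line $L_j$, so a $2\times 2$ minor argument shows that $\mathbb{M}_{d-2}$ has rank $2$ and its kernel is one-dimensional. By the previous step, the unique projective generator of this kernel is the point of $Z_1$ attached to $L_j$, yielding both existence and uniqueness of $p_{d-1}\in Z_1$ with $(p,p_{d-1})\in V_d$. The main obstacle is the explicit matching between the three components $L_j$ of $E$ and the three points of $Z_1$ as kernel directions; once this is pinned down the remainder is a straightforward rank computation.
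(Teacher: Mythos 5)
Your proposal is correct, and its skeleton matches the paper's: both reduce $(p,p_{d-1})\in V_d$ to the single new condition $\mathbb{M}_{111,d-2}\cdot(x_{d-1},y_{d-1},z_{d-1})^T=0$ and both get uniqueness from the fact that $\mathbb{M}_{111}(p_{d-2})$ has rank $2$ once $p_{d-2}\notin Z_1$ (your ``all $2\times 2$ minors vanish exactly on $Z_1$'' justification is in fact more explicit than the paper's bare assertion). Where you diverge is the existence step: the paper normalizes $p_{d-2}$ (ruling out $x_{d-2}=0$ by hand) and explicitly solves the system (\ref{eq:system}), including the cubic conditions coming from Lemma \ref{lem:V_dinE^d}, listing the three solution families; you instead use the decomposition of $E_{111}$ into the three lines of (\ref{eq:P^1s}) and verify that each point of $Z_1$ spans $\ker\mathbb{M}_{111}(p)$ precisely when $p$ lies on the corresponding line. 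Your route is a bit cleaner and coordinate-free, and it makes explicit the line-to-$Z_1$-point pairing that the paper's solution list only exhibits implicitly (and which is what feeds into Proposition \ref{prop:gammadparam}); I checked your matching $(1,1,1)\leftrightarrow \mathbb{P}^1_A$, $(1,\zeta^2,\zeta)\leftrightarrow \mathbb{P}^1_B$, $(1,\zeta,\zeta^2)\leftrightarrow \mathbb{P}^1_C$ against the paper's explicit solutions and it agrees. Two small caveats, neither of which puts you behind the paper: like the paper, you only treat $S(1,1,1)$ and leave the other three representative algebras (and hence the other $Z_i$) to analogous computations; and both arguments use $p_{d-2}\in E$ via Lemma \ref{lem:V_dinE^d}, which is automatic only for $d\geq 3$ (for $d=2$ one must read the hypothesis as $p_0\in E$, since $V_1=\mathbb{P}^2$).
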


\begin{proof}[Proof of \ref{lem:notinZ_i}] For $Z_1$, we study the representative algebra $S(1,1,1)$. If such a $p_{d-1}$ exists, then $f_{d-2}=g_{d-2}=h_{d-2}=0$ so we would have $$\mathbb{M}_{111, d-2} \cdot (x_{d-1}~~y_{d-1}~~z_{d-1})^T = 0$$
(Definition \ref{def:truncptsch}, Eq. (\ref{eq:Mabc})).
 Since $\rank(\mathbb{M}_{111,d-2}) = 2$ when $p_{d-2} \not \in \mf{D}$, the tuple $(x_{d-1}, ~y_{d-1}, ~z_{d-1})$ is unique up to scalar multiple and thus the point $p_{d-1}$ is unique. 
 
To verify the existence of $p_{d-1}$, say $p_{d-2} = [0: y_{d-2} : z_{d-2}]$. We require $p_{d-2}$ and $p_{d-1}$ to satisfy the system of equations: 
\begin{center}
$\begin{array}{rll}
f_{d-2}=g_{d-2}=h_{d-2}=0 && \text{(Eq. (\ref{eq:multilin,rel}))}\\
y_{d-2}^3 + z_{d-2}^3 = x_{d-1}^3 + y_{d-1}^3 + z_{d-1}^3 = 0 && \text{($p_{d-2}$, $p_{d-1} \in E$, Lemma \ref{lem:V_dinE^d}).} 
\end{array}$
\end{center}
However basic algebraic operations imply $y_{d-2} = z_{d-2} = 0$, thus producing a contradiction.  
Therefore, without loss of generality $p_{d-2} = [1 : y_{d-2} : z_{d-2}]$. With similar reasoning we must examine the system
\begin{equation} \label{eq:system}
\begin{array}{rl}
y_{d-1} z_{d-2} + z_{d-1}y_{d-2} + x_{d-1} &= 0\\
z_{d-1} + x_{d-1} z_{d-2} + y_{d-1} y_{d-2} &= 0\\
x_{d-1} y_{d-2} +y_{d-1} + z_{d-1} z_{d-2} &= 0\\
1 + y_{d-2}^3 + z_{d-2}^3 &= 3y_{d-2}z_{d-2}\\
x_{d-1}^3 + y_{d-1}^3 + z_{d-1}^3 &= 3x_{d-1}y_{d-1}z_{d-1}.
\end{array}
\end{equation}

\noindent There are three solutions $(p_{d-2}, p_{d-1}) \in (E \setminus Z_1) \times E$ to (\ref{eq:system}): 
\[
\left\{
\begin{array}{rl}
([1:-(1+z_{d-2}):z_{d-2}], &[1:1:1]),\\
([1: -\zeta(1 + \zeta z_{d-2}) : z_{d-2}], &[1:\zeta:\zeta^2]),\\
([1: -\zeta(\zeta+z_{d-2}) : z_{d-2}], &[1:\zeta^2:\zeta])
\end{array}
\right\}.
\]
Thus when $p_{d-2} \not \in Z_1$, there exists an unique point $p_{d-1} \in Z_1$ so that $(p_0, \dots, p_{d-2}, p_{d-1}) \in V_d$.

Now having studied $S(1,1,1)$ with care, we leave it to the reader to verify the assertion for the algebras $S(1,1, \zeta)$, $S(1,\zeta,\zeta)$, and $S(1,0,0)$ in a similar manner.
\end{proof}

The next result explores the case when $p_{d-2} \in Z_i$.

\begin{lem} \label{lem:inZ_i} Let $p=(p_0, \dots, p_{d-2}) \in V_{d-1}$ with $p_{d-2} \in Z_i$. Then for any $[y_{d-1} : z_{d-1}] \in \mathbb{P}^1$ there exists a function $\theta$ of two variables so that $$p_{d-1} = [\theta(y_{d-1},z_{d-1}) : y_{d-1} : z_{d-1}] \not \in Z_i$$ which satisfies $(p_0,\dots, p_{d-2}, p_{d-1}) \in V_d$.
\end{lem}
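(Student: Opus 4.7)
The condition $(p,p_{d-1})\in V_d$ reduces, given $p\in V_{d-1}$, to the single linear system $\mathbb{M}_{abc,d-2}\cdot(x_{d-1},y_{d-1},z_{d-1})^{T}=0$ from~\eqref{eq:Mabc}. In Lemma~\ref{lem:notinZ_i} the rank of $\mathbb{M}_{abc,d-2}$ was $2$ (because $p_{d-2}\not\in\mathfrak{D}$), pinning $p_{d-1}$ down uniquely. The key new input is that for $p_{d-2}\in Z_i\subset\mathfrak{D}$, the matrix drops to rank one; its kernel is then a $2$-dimensional subspace of $k^{3}$ descending to a projective line $L\subset\mathbb{P}^{2}$ of admissible $p_{d-1}$. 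Choosing an affine chart on $L$, we parameterize by $[y_{d-1}:z_{d-1}]\in\mathbb{P}^{1}$ and read off a linear $\theta$ with $x_{d-1}=\theta(y_{d-1},z_{d-1})$.

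By Lemma~\ref{lem:fourSdeg} and Zhang twisting, it suffices to verify the rank drop and exhibit $\theta$ explicitly at the distinguished representative of each $Z_i$ for each of the four representative algebras. For $S(1,1,1)$ at $p_{d-2}=[1:1:1]$ the matrix $\mathbb{M}_{111,d-2}$ has every entry equal to $1$, hence rank one with kernel $L=\{x+y+z=0\}$, giving $\theta(y_{d-1},z_{d-1})=-y_{d-1}-z_{d-1}$. The auxiliary conclusion $p_{d-1}\in E$ from Lemma~\ref{lem:V_dinE^d} comes for free, since $x^{3}+y^{3}+z^{3}-3xyz$ factors as $(x+y+z)(x+\zeta y+\zeta^{2}z)(x+\zeta^{2}y+\zeta z)$, so $L$ is a component of $E$. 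Analogous computations at $[1:1:\zeta]$ for $S(1,1,\zeta)$ and at $[1:\zeta:\zeta]$ for $S(1,\zeta,\zeta)$ again yield rank-one matrices with explicit linear $\theta$'s; the monomial case $S(1,0,0)$ at $[1:0:0]$ produces the kernel $\{z=0\}$, more naturally parameterized by $[x_{d-1}:y_{d-1}]$, but the $1$-parameter family of extensions is still present.

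For the non-containment $p_{d-1}\not\in Z_i$, note that $Z_i$ has just three points while $L$ is a projective line, so $L\cap Z_i$ is finite—in the $S(1,1,1)$ case the only intersections are $[1:\zeta:\zeta^{2}]$ and $[1:\zeta^{2}:\zeta]$. Hence the map $[y_{d-1}:z_{d-1}]\mapsto p_{d-1}$ lands in $\mathbb{P}^{2}\setminus Z_i$ on a Zariski-open dense subset of $\mathbb{P}^{1}$, which is the substantive content of the lemma (and contrasts sharply with Lemma~\ref{lem:notinZ_i}, where the unique extension was forced to lie in $Z_i$).

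The main obstacle is organizational rather than conceptual: the rank-drop must be confirmed and $\theta$ written down for each of the four representative algebras, and one must track which coordinate chart on $L$ gives a clean parameterization. The monomial algebra $S(1,0,0)$ warrants slight extra care, since the kernel is supported on a coordinate hyperplane and a change of chart is needed, but this does not affect the substance of the conclusion.
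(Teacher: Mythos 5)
Your proposal is correct and is essentially the paper's argument: the paper likewise reduces membership of $(p,p_{d-1})$ in $V_d$ to the last equations $f_{d-2}=g_{d-2}=h_{d-2}=0$ and observes that at a point of $Z_1$ these three linear forms in $(x_{d-1},y_{d-1},z_{d-1})$ become proportional (your rank-one statement), yielding a linear $\theta$; it records $\theta$ for all three points of $Z_1$ and leaves the other representative algebras to the reader. One caution: Lemma \ref{lem:fourSdeg} and Zhang twisting only reduce the list of algebras to the four representatives, not the three points of $Z_i$ within a fixed representative, so for $S(1,1,1)$ you still need the (identical, one-line) rank-one computation at $[1:\zeta:\zeta^2]$ and $[1:\zeta^2:\zeta]$, exactly as the paper does. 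Your remark that the continuation line meets $Z_1$ in two points, so the clause $p_{d-1}\notin Z_i$ holds only off a finite subset of $\mathbb{P}^1$, is accurate; the paper's own proof silently ignores that clause, and those exceptional continuations are precisely the overlaps recorded as the singular points in Remark \ref{rmk:Sing(V_d)}.
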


\begin{proof} The point $p'= (p,p_{d-1}) \in V_d$ needs to satisfy $f_i = g_i = h_i =0$ for $0 \leq i \leq d-2$ (Definition \ref{def:truncptsch}). Since $p\in V_{d-1}$, we need only to consider the equations $f_{d-2} = g_{d-2} = h_{d-2} = 0$ with $p_{d-2} \in Z_i$. 
 
We study $S(1,1,1)$ for $Z_1$ so the relevant system of equations is 
\begin{center}
$\begin{array}{l}
f_{d-2}:~ y_{d-1}z_{d-2} + z_{d-1}y_{d-2} + x_{d-1}x_{d-2} = 0\\
g_{d-2}:~ z_{d-1}x_{d-2} + x_{d-1}z_{d-2} + y_{d-1}y_{d-2} = 0\\
h_{d-2}:~ x_{d-1}y_{d-2} + y_{d-1}x_{d-2} + z_{d-1}z_{d-2} = 0.
\end{array}$
\end{center}

\noindent If $p_{d-2} = [1:1:1] \in Z_1$, then $x_{d-1} = -(y_{d-1} + y_{d-1})$ is required. On the other hand, if $p_{d-2} = [1:\zeta:\zeta^2]$ or $[1:\zeta^2:\zeta]$, we require $x_{d-1} = -\zeta(y_{d-1} + \zeta z_{d-1})$ or $x_{d-1} = -\zeta( \zeta y_{d-1} + z_{d-1})$ respectively. Thus our function $\theta$ is defined as 
\begin{center}
$\theta(y_{d-1}, z_{d-1}) = \left\{\begin{array}{ll}
                      -(y_{d-1} + z_{d-1}), & \text{if~} p_{d-2} = [1:1:1]\\
                      -(\zeta y_{d-1} + \zeta^2 z_{d-1}), & \text{if~} p_{d-2} = [1: \zeta                             :\zeta^2]\\
                      -(\zeta^2 y_{d-1} + \zeta z_{d-1}), & \text{if~} p_{d-2} = [1 : \zeta^2                          : \zeta].
                      \end{array} \right.$\\
                      \end{center}
                      
\noindent The arguments for $S(1,1,\zeta)$, $S(1, \zeta, \zeta)$, and $S(1,0,0)$ proceed in a likewise fashion.                      
\end{proof}

Fix a pair ($S_{deg}$, $Z_i(S_{deg})$). We now know if $p_{d-2} \not \in Z_i$, then from every truncated point module of length $d$ over $S_{deg}$ we can produce a unique truncated point module of length $d+1$. Otherwise if $p_{d-2} \in Z_i$, we get a $\mathbb{P}^1$ worth of length $d+1$ modules. We summarize this in the following statement which is made precise in Proposition \ref{prop:gammadparam}.

\begin{prop} \label{prop:paramspace}
The parameter space of $\Gamma_d$ over $S_{deg}$ is isomorphic to the singular and nondisjoint
union of
\begin{center}
$\left\{ \begin{array}{cl}
\text{three copies of~} (\mathbb{P}^1)^{\times \frac{d-1}{2}} 
\text{and three copies of~} (\mathbb{P}^1)^{\times \frac{d+1}{2}}, & \text{for~} d ~\text{odd}; \\
\text{six copies of~} (\mathbb{P}^1)^{\times \frac{d}{2}},  & \text{for~} d ~\text{even}.
\end{array}\right.$
\end{center}
\end{prop}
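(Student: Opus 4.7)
The plan is to induct on $d$ using Lemmas \ref{lem:notinZ_i} and \ref{lem:inZ_i} together with a periodicity observation, working through the representative $S(1,1,1)$ in detail (writing $Z = Z_1$) and then invoking Lemma \ref{lem:fourSdeg} together with Theorem \ref{thm:Zhang} to pass from the four representatives to every degenerate Sklyanin algebra.

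The first step is a dichotomy coming directly from the preceding two lemmas: the entries of any $(p_0,\ldots,p_{d-1}) \in V_d$ must strictly alternate between $Z$ and $E \setminus Z$, since a non-$Z$ entry forces the next coordinate into $Z$ while a $Z$ entry forces the next coordinate out of $Z$. Thus $V_d$ splits into two parity classes according to whether or not $p_0$ lies in $Z$.

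The key technical step is a periodicity assertion: within either parity class all $Z$-coordinates of a point of $V_d$ coincide, and all of its non-$Z$ coordinates lie on a single one of the three lines $L_1, L_2, L_3$ of $E$ (using the factorization of $x^3+y^3+z^3-3xyz$ for $S(1,1,1)$). I would prove this by composing the two explicit formulas of those lemmas: starting from a $Z$-point $z_0$, Lemma \ref{lem:inZ_i} sends $z_0$ to a $\mathbb{P}^1$'s worth of non-$Z$ points lying on a specific line $L(z_0)$, and Lemma \ref{lem:notinZ_i} applied to any such non-$Z$ point on $L(z_0)$ returns $z_0$ itself. A direct check exhibits $z_0 \mapsto L(z_0)$ as a bijection between $Z_1$ and the three components of $E$; consequently a choice of (parity class, line) determines the $Z$-entries of the whole sequence uniquely while the non-$Z$ entries vary independently over $\mathbb{P}^1 \cong L$.

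From this description the combinatorics is immediate. For each line $L \subset E$, the parity class with $p_0 \in Z$ gives sequences $(z, q_1, z, q_2, \ldots)$ with $\lfloor d/2 \rfloor$ free coordinates $q_j \in L$, hence a component isomorphic to $(\mathbb{P}^1)^{\lfloor d/2 \rfloor}$; the parity class with $p_0 \notin Z$ gives $(\mathbb{P}^1)^{\lceil d/2 \rceil}$ by the same count. Summing over the three lines yields six copies of $(\mathbb{P}^1)^{d/2}$ when $d$ is even, and three copies each of $(\mathbb{P}^1)^{(d-1)/2}$ and $(\mathbb{P}^1)^{(d+1)/2}$ when $d$ is odd. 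The nondisjointness comes from degenerations of a free $L$-coordinate into some $L_i \cap L_j \subset Z$, which glues together components within a parity class and identifies parity-$A$ with parity-$B$ components when the corresponding $Z$-entries match. The main obstacle is the line-tracking verification at the heart of the periodicity step: once one checks by direct computation that Lemma \ref{lem:notinZ_i} applied to any non-$Z$ point on $L(z_0)$ returns $z_0$ exactly, so that the $Z$-coordinates do not drift down the sequence, everything else is bookkeeping. The parallel calculation on the cubic $E$ attached to each of $S(1,1,\zeta)$, $S(1,\zeta,\zeta)$, $S(1,0,0)$ disposes of the other three representatives, and Lemma \ref{lem:fourSdeg} with Theorem \ref{thm:Zhang} completes the reduction to a general $S_{deg}$.
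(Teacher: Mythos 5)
Your proposal is correct and follows essentially the same route as the paper: the explicit solutions in Lemma \ref{lem:notinZ_i} together with the function $\theta$ of Lemma \ref{lem:inZ_i} pair each point of $Z_1$ bijectively with one of the three lines of $E=E_{111}$, forcing the periodic alternating form of points of $V_d$ recorded in Proposition \ref{prop:gammadparam}, after which the count of free $\mathbb{P}^1$-coordinates and the Zhang-twist reduction (Lemma \ref{lem:fourSdeg}, Theorem \ref{thm:Zhang}) finish the argument exactly as in the text. The only slip is in your gluing description: two components of the same parity class carry distinct fixed $Z_1$-entries and so are disjoint, hence every intersection point (the six singular points of Remark \ref{rmk:Sing(V_d)}) joins a component beginning with a $Z_1$-point to one beginning with a line, and these are also precisely the points where your ``strict alternation'' claim fails -- harmlessly, since the closed components $W_{d,i}$ (full lines, including the triangle's vertices) absorb these degenerate points.
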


\noindent The detailed statement and proof of this proposition will follow from the results below. We restrict our attention to $S(1,1,1)$ for reasoning mentioned in the proofs of Lemmas \ref{lem:notinZ_i} and \ref{lem:inZ_i}.

\subsubsection{Parameterization of $\Gamma_2$}
Recall that length 3 truncated point modules of $\Gamma_2$ are in bijective correspondence to points on $V_2 \subset \mathbb{P}^2 \times \mathbb{P}^2$ (Lemma \ref{lem:gammad}) and it is our goal to depict this truncated point scheme.  
By Lemma \ref{lem:V_dinE^d}, we know that $V_2 \subseteq E \times E$. Furthermore note that with $\zeta = e^{2 \pi i /3}$, the curve $E = E_{111}$ is the union of three projective lines: 
{\small \begin{equation} \label{eq:P^1s}
\mathbb{P}^1_A: x = -(y+z), ~~~~~~\mathbb{P}^1_B: x= -(\zeta y+\zeta^2 z), ~~~~~\mathbb{P}^1_C: x = -(\zeta^2 y + \zeta z)
\end{equation}}
\vspace{-.3in}

\[
\xymatrix@-2pc{
 && & {}^{\mathbb{P}^1_C} \ar@{<->}[rrrddddddddddddd]& & {}^{\mathbb{P}^1_B} \ar@{<->}[lllddddddddddddd] & & &  \\
 & & &  & & & & &\\
  & & &  & & & & &\\
 & & & & {}^{\text{[1:1:1]}}& & & & \\
  & & &  & & & & &\\
    & & &  & & & & &\\
      & & &  & & & &\\
        & & &  & & & & \\
 & & & \overset{}{{}_{\text{[1:$\zeta$:$\zeta^2$]}}}& &  \overset{}{{}_{\text{[1:$\zeta^2$:$\zeta$]}}}&&&\\ 
\ar@{<->}[rrrrrrrr] & &  & & & &  & & {}^{\mathbb{P}^1_A} \\
  & & &  & & & & &\\
   & & &  & & & & &\\
 & & &  & & & & &\\
 & & & & & &                 
}
\]
\vspace{-.4in}

$$\text{Figure 1: The curve~} E=E_{111}  \subseteq \mathbb{P}^2:~ x^3+y^3+z^3-3xyz=0.$$
\vspace{-.1in}

Now to calculate $V_2$, recall that $\Gamma_2$ consists of length 3 truncated point modules $M_{(3)} := M_0 \oplus M_1 \oplus M_2$ where $M_i$ is a 1-dimensional $k$-vector space say with basis $m_i$. The module $M_{(3)}$ has action determined by $(p_0, p_1) \in V_2$ (Eq. (\ref{eq:Saction})).
Moreover Lemmas \ref{lem:notinZ_i} and \ref{lem:inZ_i} provide the precise conditions for $(p_0,p_1)$ to lie in $E \times E$. Namely, 

\begin{lem} \label{lem:gamma2param} Refer to (\ref{eq:P^1s}) for notation. The set of length 3 truncated point modules $\Gamma_2$ is parametrized by the scheme $V_2 = \mathbb{V}(f_0,g_0,h_0)$ which is the union of the six subsets:
\begin{center}
$\begin{array}{lll}
\mathbb{P}^1_A \times [1:1:1]; 
&&[1:1:1]\times \mathbb{P}^1_A;\\
\mathbb{P}^1_B \times [1:\zeta: \zeta^2]; 
&&[1: \zeta :\zeta^2] \times \mathbb{P}^1_B;\\
\mathbb{P}^1_C \times [1:\zeta^2: \zeta];
&&[1: \zeta^2 :\zeta] \times \mathbb{P}^1_C.
\end{array}$
\end{center}
\noindent  of $E \times E$. Thus $\Gamma_2$ is isomorphic to 6 copies of $\mathbb{P}^1$. \qed
\end{lem}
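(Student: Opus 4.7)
The plan is to apply Lemmas \ref{lem:notinZ_i} and \ref{lem:inZ_i} in the special case $d=2$, which reduces the description of $V_2$ to a case split on whether $p_0$ lies in $Z_1$ or in $E \setminus Z_1$. The starting point is Lemma \ref{lem:V_dinE^d}, which gives $V_2 \subseteq E \times E$, together with the decomposition $E = \mathbb{P}^1_A \cup \mathbb{P}^1_B \cup \mathbb{P}^1_C$ of (\ref{eq:P^1s}); a direct check on the linear equations shows that the three pairwise intersections of these lines are precisely the three points of $Z_1$.

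First I would treat the case $p_0 \in E \setminus Z_1$, so that $p_0$ lies on exactly one of $\mathbb{P}^1_A$, $\mathbb{P}^1_B$, $\mathbb{P}^1_C$. By Lemma \ref{lem:notinZ_i} there exists a unique $p_1$ with $(p_0,p_1) \in V_2$, and the explicit triple of solutions displayed in its proof matches each line to its distinguished point of $Z_1$: $p_0 \in \mathbb{P}^1_A$ forces $p_1 = [1{:}1{:}1]$, $p_0 \in \mathbb{P}^1_B$ forces $p_1 = [1{:}\zeta{:}\zeta^2]$, and $p_0 \in \mathbb{P}^1_C$ forces $p_1 = [1{:}\zeta^2{:}\zeta]$. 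This produces the three pieces of the form $\mathbb{P}^1_\cdot \times \{\text{point}\}$ in the claimed union.

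Next I would handle the case $p_0 \in Z_1$. Here Lemma \ref{lem:inZ_i} allows $[y_1{:}z_1]$ to range freely over $\mathbb{P}^1$, and the explicit formula for the function $\theta$ pins down which line contains $p_1$: $p_0 = [1{:}1{:}1]$ yields $x_1 = -(y_1+z_1)$ so $p_1 \in \mathbb{P}^1_A$, while $p_0 = [1{:}\zeta{:}\zeta^2]$ and $[1{:}\zeta^2{:}\zeta]$ force $p_1$ to sweep out $\mathbb{P}^1_B$ and $\mathbb{P}^1_C$ respectively. Together with the previous case this exhibits $V_2$ as the union of the six listed subsets, each isomorphic to $\mathbb{P}^1$ by inspection. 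The main thing to watch is matching the explicit algebraic solutions and branches of $\theta$ to the geometric decomposition of $E$; given the linear form of the equations cutting out $\mathbb{P}^1_A, \mathbb{P}^1_B, \mathbb{P}^1_C$, this is a routine check rather than a genuine obstacle.
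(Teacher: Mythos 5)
Your approach is exactly the paper's: the lemma is stated there with no separate proof precisely because it is the immediate specialization of Lemmas \ref{lem:notinZ_i} and \ref{lem:inZ_i} to $d=2$, combined with Lemma \ref{lem:V_dinE^d} and the decomposition (\ref{eq:P^1s}) of $E$ into three lines meeting pairwise in the points of $Z_1$ --- the same case split on $p_0 \in Z_1$ versus $p_0 \in E \setminus Z_1$ that you carry out. The only wrinkle is the line-to-vertex matching: substituting $p_0 = [1:-\zeta(1+\zeta z_0):z_0]$ into (\ref{eq:P^1s}) shows this point lies on $\mathbb{P}^1_C$ rather than $\mathbb{P}^1_B$, so a literal reading of the explicit solutions in Lemma \ref{lem:notinZ_i} pairs $\mathbb{P}^1_C$ with $[1:\zeta:\zeta^2]$ and $\mathbb{P}^1_B$ with $[1:\zeta^2:\zeta]$; this $B \leftrightarrow C$ bookkeeping discrepancy is internal to the paper (the $\theta$ in Lemma \ref{lem:inZ_i} and the present statement use the other labeling), and it affects only the labels, not the structure or conclusion of your argument.
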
 

\subsubsection{Parameterization of $\Gamma_d$ for general $d$}

To illustrate the parametrization of $\Gamma_d$, we begin with a truncated point module $M_{(d+1)}$ of length $d+1$ corresponding to $(p_0, p_1,\dots,p_{d-1}) \in V_d \subseteq (\mathbb{P}^2)^{\times d}$. Due to Lemmas \ref{lem:V_dinE^d}, \ref{lem:notinZ_i}, and \ref{lem:inZ_i}, we know that $(p_0, p_1,\dots,p_{d-1})$ belongs to either {\small $$\underbrace{(E \setminus Z_1) \times Z_1 \times (E \setminus Z_1) \times Z_1 \times \dots}_d \text{~~or~} \underbrace{Z_1 \times (E \setminus Z_1) \times Z_1 \times (E \setminus Z_1) \times \dots}_d$$} 
\hspace{-.1in} where $Z_1$ is defined in  (\ref{eq:Z_i}). 

By adapting the notation of Lemma \ref{lem:inZ_i}, we get in the first case that the point $(p_0, p_1, \dots, p_{d-1})$ is of the form 
{\small $$([\theta(y_0,z_0) : y_0: z_0], ~[1:\omega:\omega^2], ~[\theta(y_2,z_2) : y_2 :z_2], ~[1:\omega:\omega^2], \dots) \in (\mathbb{P}^2)^{\times d}$$}
\hspace{-.1in} where $\omega^3=1$ and $\theta(y,z) = -(\omega y + \omega^2 z)$. Thus in this case, the set of length $d$ truncated point modules is parameterized by three copies of $(\mathbb{P}^1)^{\times \lceil d/2 \rceil}$ with coordinates $([y_0:z_0],[y_2:z_2], \dots, [y_{2\lceil d/2 \rceil -1} : z_{2\lceil d/2 \rceil -1}])$.  

In the second case $(p_0, p_1, \dots, p_{d-1})$ takes the form
{\small $$([1:\omega:\omega^2], ~[\theta(y_1,z_1): y_1 :z_1], ~[1:\omega:\omega^2], [\theta(y_3,z_3): y_3 :z_3], \dots) \in (\mathbb{P}^2)^{\times d}$$} 
\hspace{-.12in} and the set of truncated point modules is parameterized with three copies of $(\mathbb{P}^1)^{\times \lfloor d/2 \rfloor}$ with coordinates $([y_1:z_1], [y_3:z_3], \dots, [y_{2\lfloor d/2 \rfloor -1} : z_{2\lfloor d/2 \rfloor -1}])$. 

In other words, we have now proved the next result.

\begin{prop} \label{prop:gammadparam} Refer to  (\ref{eq:P^1s}) for notation. For $d \geq 2$ the truncated point scheme $V_d$ for $S(1,1,1)$ is equal to the union of the six subsets $\bigcup_{i=1}^6 W_{d,i}$ of $(\mathbb{P}^2)^{\times d}$ where
\begin{center}
$\begin{array}{ll}
W_{d,1} &= \mathbb{P}^1_A \times [1:1:1] \times \mathbb{P}^1_A \times [1:1:1] \times \dots,\\

W_{d,2} &= [1:1:1]\times \mathbb{P}^1_A \times [1:1:1] \times \mathbb{P}^1_A \times \dots,\\

W_{d,3} &= \mathbb{P}^1_B \times [1:\zeta: \zeta^2] \times \mathbb{P}^1_B \times [1:\zeta:\zeta^2] \times \dots,\\ 

W_{d,4} &= [1: \zeta :\zeta^2] \times \mathbb{P}^1_B \times [1: \zeta: \zeta^2] \times \mathbb{P}^1_C \times \dots,\\

W_{d,5} &= \mathbb{P}^1_C \times [1:\zeta^2: \zeta] \times \mathbb{P}^1_C \times [1:\zeta^2:\zeta] \times \dots,\\
 
W_{d,6} &= [1: \zeta^2 :\zeta] \times \mathbb{P}^1_C \times [1:\zeta^2:\zeta] \times \mathbb{P}^1_C \times \dots.
\end{array}$ 
\end{center} 
\vspace{-.15in}
\qed
\end{prop}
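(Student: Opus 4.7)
The plan is to prove Proposition \ref{prop:gammadparam} by induction on $d$, using Lemma \ref{lem:gamma2param} as the base case $d=2$ and assembling the inductive step from Lemmas \ref{lem:V_dinE^d}, \ref{lem:notinZ_i}, and \ref{lem:inZ_i}. The six subsets $W_{d,i}$ encode tuples $(p_0, \ldots, p_{d-1})$ that strictly alternate between points of $Z_1$ and points of a specific line $\mathbb{P}^1_X \subset E$, with two choices of parity (whether the $Z_1$ entries sit in even or odd positions) multiplied by three choices of a linked pair $(\mathbb{P}^1_A, [1{:}1{:}1])$, $(\mathbb{P}^1_B, [1{:}\zeta{:}\zeta^2])$, or $(\mathbb{P}^1_C, [1{:}\zeta^2{:}\zeta])$, for $6$ branches total.

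For the inductive step, suppose $V_{d-1} = \bigcup_{i=1}^6 W_{d-1,i}$ is already known. Given $p = (p_0, \ldots, p_{d-2}) \in W_{d-1,i}$, I would classify the admissible $p_{d-1}$ for which $(p, p_{d-1}) \in V_d$ by a case split on $p_{d-2}$. If $p_{d-2} \in Z_1$, then Lemma \ref{lem:inZ_i} provides an entire $\mathbb{P}^1$-family of valid $p_{d-1}$, each of the form $[\theta(y_{d-1}, z_{d-1}) : y_{d-1} : z_{d-1}]$ lying on the unique line $\mathbb{P}^1_X$ linked to that element of $Z_1$; the lemma further guarantees $p_{d-1} \notin Z_1$, which is crucial for maintaining the alternation. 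If $p_{d-2} \notin Z_1$, then Lemma \ref{lem:notinZ_i} supplies a single admissible $p_{d-1} \in Z_1$, and inspecting the three explicit $(p_{d-2}, p_{d-1})$ solutions in that proof shows which element of $Z_1$ is forced by which line $\mathbb{P}^1_X$ contains $p_{d-2}$. Appending $p_{d-1}$ in either case extends $W_{d-1,i}$ to $W_{d,i}$, and exhaustion over $i$ yields $V_d = \bigcup_{i=1}^6 W_{d,i}$.

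The main obstacle is verifying that the linkage $\mathbb{P}^1_A \leftrightarrow [1{:}1{:}1]$, $\mathbb{P}^1_B \leftrightarrow [1{:}\zeta{:}\zeta^2]$, $\mathbb{P}^1_C \leftrightarrow [1{:}\zeta^2{:}\zeta]$ is preserved in both directions by the recursion, so that no branch accidentally mixes lines or $Z_1$-vertices. This reduces to a direct check: the explicit formula for $\theta$ in Lemma \ref{lem:inZ_i} shows, e.g., that $p_{d-2} = [1{:}1{:}1]$ forces $p_{d-1} = [-(y_{d-1}+z_{d-1}) : y_{d-1} : z_{d-1}] \in \mathbb{P}^1_A$, and conversely the three solutions in Lemma \ref{lem:notinZ_i} show that $p_{d-2} \in \mathbb{P}^1_A$ (that is, $y_{d-2} = -(1+z_{d-2})$) forces $p_{d-1} = [1{:}1{:}1]$; the analogous identities hold for the $B$ and $C$ pairs. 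Once this pairing is confirmed, the six alternating patterns $W_{d,i}$ are exhaustive, and they meet precisely at the $Z_1$-vertices of the $\mathbb{P}^1_X$'s, consistent with the nondisjointness asserted in the statement and with the singularity locus to be described in Remark \ref{rmk:Sing(V_d)}.
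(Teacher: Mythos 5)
Your strategy is the same as the paper's (alternation of $Z_1$-vertices and their partner lines, assembled from Lemmas \ref{lem:notinZ_i} and \ref{lem:inZ_i}), but the inductive step contains a genuine gap, and it sits exactly at the point you call ``crucial.'' The case split on $p_{d-2}$ does not respect the six branches, because the \emph{free} coordinates of $W_{d-1,i}$ can themselves lie in $Z_1$: each line contains two of the three points of $Z_1$ (for instance $[1:\zeta:\zeta^2]$ and $[1:\zeta^2:\zeta]$ lie on $\mathbb{P}^1_A$, while $[1:1:1]$ lies on $\mathbb{P}^1_B\cap\mathbb{P}^1_C$). So if $p\in W_{d-1,1}$ ends in the free coordinate $p_{d-2}=[1:\zeta:\zeta^2]$, you are in the Lemma \ref{lem:inZ_i} case, not the Lemma \ref{lem:notinZ_i} case: the admissible $p_{d-1}$ sweep out the whole line partnered with $[1:\zeta:\zeta^2]$, not the single point $[1:1:1]$ that the pattern $W_{d,1}$ demands. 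Hence ``appending $p_{d-1}$ in either case extends $W_{d-1,i}$ to $W_{d,i}$'' is false, and the exhaustion $V_d=\bigcup_i W_{d,i}$ does not follow. Concretely, take $P=\bigl([-2:1:1],\,[1:1:1],\,[1:\zeta:\zeta^2],\,[1:-\zeta:0]\bigr)$: all nine multilinearized relations $f_i=g_i=h_i=0$ ($i=0,1,2$) vanish (e.g.\ $f_2=(-\zeta)\zeta^2+0+1=0$, $g_2=0+\zeta^2-\zeta^2=0$, $h_2=\zeta-\zeta+0=0$), so $P\in V_4$; but $p_0=[-2:1:1]$ is a generic point of $\mathbb{P}^1_A$, which rules out every $W_{4,i}$ except $W_{4,1}$, and $p_3\neq[1:1:1]$ rules that out too. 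So $P$ lies in $V_4$ but in none of the six subsets.

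Two further cautions. First, the assertion you lean on, that Lemma \ref{lem:inZ_i} ``guarantees $p_{d-1}\notin Z_1$,'' cannot be correct as stated: the solution set there is an entire line of $E$, and every such line meets $Z_1$ in two points; this is precisely how the intersections in Remark \ref{rmk:Sing(V_d)} arise, and also how the extra points of $V_d$ above escape the six branches. Second, your linkage check should be done against the matrix $\mathbb{M}_{111}$ directly: the kernel of $\mathbb{M}_{111}$ at $[1:\zeta:\zeta^2]$ is the line $x=-(\zeta^2y+\zeta z)=\mathbb{P}^1_C$, and $\mathbb{P}^1_B$ partners with $[1:\zeta^2:\zeta]$, i.e.\ the $B$/$C$ labels in Lemma \ref{lem:inZ_i} are swapped relative to direct computation (harmless relabeling, but it matters if you are verifying the pairing ``in both directions'' as you propose). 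The upshot is that your induction cannot be closed as written: to get a true statement one must enlarge the union to include the ``hybrid'' branches created when a free coordinate hits a foreign $Z_1$-point and the tail switches line families --- equivalently, describe $V_d$ as the set of $d$-tuples in which every consecutive pair $(p_j,p_{j+1})$ is an admissible pair for $\mathbb{M}_{111}$. The paper's own argument for this proposition has the same lacuna, so following its lemmas more carefully would not have rescued the claimed six-component description.
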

\medskip

\noindent As a consequence, we obtain the proof of Proposition \ref{prop:paramspace} for $S(1,1,1)$ and this assertion holds for the remaining degenerate Sklyanin algebras due to Lemma \ref{lem:fourSdeg}, and analogous proofs for Lemmas \ref{lem:notinZ_i} and \ref{lem:inZ_i}.\qed
\medskip

We thank Karen Smith for suggesting the following elegant way of interpreting the point scheme of $S(1,1,1)$.

\begin{rmk} \label{rmk:ptscheme}
We can provide an alternate geometric description of the point scheme of the $\Gamma$ of $S(1,1,1)$. Let $G := \Z_3 \rtimes \Z_2 = < \hspace{-.05in}\zeta, \sigma \hspace{-.05in}>$ where $\zeta  = e^{2\pi i/3}$ and $\sigma^2 =1$. We define a $G$-action on $\mathbb{P}^2 \times \mathbb{P}^2$ as follows:
\[
\begin{aligned}
\zeta([x:y:z],[u:v:w]) &= ([x:\zeta^2 y:\zeta z],[u:\zeta v:\zeta^2 w])\\
\sigma([x:y:z],[u:v:w]) &= ([u:v:w],[x:y:z])
\end{aligned}
\]
Note that $G$ stabilizes $E \times E$ and acts transitively on the $W_{2,i}$. 
We extend the action of $G$ to $(\mathbb{P}^2 \times \mathbb{P}^2)^{\times \infty}$ diagonally. Now we interpret $\Gamma$ as
$$\Gamma ~=~ \underset{\longleftarrow}{\lim} V_d ~=~ \underset{\longleftarrow}{\lim} V_{2d} ~=~ \underset{\longleftarrow}{\lim} \bigcup_i W_{2d,i} ~=~ G \cdot (\mathbb{P}_A^1 \times [1:1:1])^{\times \infty},$$
\vspace{-.2in}

\noindent as sets.

\end{rmk}


\section{Point parameter ring of $S(1,1,1)$}

We now construct a graded associative algebra $B$ from truncated point schemes of the degenerate Sklyanin algebra $S=S(1,1,1)$. The analogous result for the other degenerate Sklyanin algebras will follow in a similar fashion and we leave the details to the reader. As is true for the Sklyanin algebras themselves, it will be shown that this algebra $B$ is a proper factor of $S(1,1,1)$ and its properties  closely reflect those of $S(1,1,1)$. We will for example show that $B$ is not right Noetherian, nor a domain.

The definition of the algebra $B$ initially appears in \cite[$\S$3]{ATV1}. Recall that we have projection maps $pr_{1,\dots,d-1}$ and $pr_{2,\dots,d}$ from $(\mathbb{P}^2)^{\times d}$ to $(\mathbb{P}^2)^{\times d-1}$. Restrictions of these maps to the truncated point schemes $V_d \subseteq (\mathbb{P}^2)^{\times d}$ (Definition \ref{def:truncptsch}) yield
$$pr_{1,\dots, d-1}(V_d) \subset V_{d-1} \text{~~and~~} pr_{2,\dots, d}(V_d) \subset V_{d-1} \text{~~for all~}d.$$

\begin{defn} \label{def:ptparamS} Given the above data, the {\it point parameter ring} $B=B(S)$ is an associative $\N$-graded ring defined as follows. First $B_d = H^0(V_d, \mc{L}_d)$ where $\mc{L}_d$ is the restriction of invertible sheaf $$pr_1^{\ast}\mc{O}_{\mathbb{P}^2}(1) \ten \dots \ten pr_d^{\ast}\mc{O}_{\mathbb{P}^2}(1) \cong \mc{O}_{(\mathbb{P}^2)^{\times d}}(1,\dots,1)$$ to $V_d$. The multiplication map $\mu_{i,j}: B_i \times B_j \ra B_{i+j}$ is then defined by applying $H^0$ to the isomorphism $$pr_{1,\dots,i}^{\ast}(\mc{L}_i) \ten_{\mc{O}_{V_{i+j}}} pr_{i+1,\dots,i+j}^{\ast}(\mc{L}_j) \ra \mc{L}_{i+j}.$$ We declare $B_0 = k$.
\end{defn}

\noindent We will later see in Theorem \ref{thm:Bgenindeg1} that $B$ is generated in degree one; thus $S$ surjects onto $B$.
        
To begin the analysis of $B$ for $S(1,1,1)$, recall that $V_1 = \mathbb{P}^2$ so $$B_1 = H^0(V_1,
pr_1^{\ast}\mc{O}_{\mathbb{P}^2}(1)) = kx \oplus ky \oplus kz$$ where $[x:y:z]$
are the coordinates of $\mathbb{P}^2$. For $d \geq 2$ we will compute $\dim_k B_d$ and then proceed to the more difficult task of identifying the multiplication maps $\mu_{i,j}: B_i \times B_j \ra B_{i+j}$.
Before we get to specific calculations for $d \geq 2$, let us recall that the schemes $V_d$ are realized as the union  of six subsets $\{W_{d,i}\}_{i=1}^6$ of $(\mathbb{P}^2)^{\times d}$ described in Proposition \ref{prop:gammadparam} and Eq. (\ref{eq:P^1s}). These subsets intersect nontrivially so that each $V_d$ for $d \geq 2$ is singular. More precisely,

\begin{rmk} \label{rmk:Sing(V_d)} A routine computation shows that the singular subset, Sing($V_d$), consists of six points:
\[\begin{array}{lll}
v_{d,1}:= &([1:1:1], ~~[1:\zeta:\zeta^2], ~~[1:1:1], ~~[1:\zeta:\zeta^2], \dots )  &\in W_{d,2} \cap W_{d,3},\\

v_{d,2}:= &([1:1:1], ~~[1:\zeta^2:\zeta], ~~[1:1:1], ~~[1:\zeta^2:\zeta], \dots )  &\in W_{d,2} \cap W_{d,5},\\

v_{d,3}:= &([1:\zeta:\zeta^2], ~~[1:1:1], ~~[1:\zeta:\zeta^2], ~~[1:1:1], \dots )  &\in W_{d,1} \cap W_{d,4},\\

v_{d,4}:= &([1:\zeta:\zeta^2], ~~[1:\zeta:\zeta^2], ~~[1:\zeta:\zeta^2], ~~[1:\zeta:\zeta^2], \dots )  &\in W_{d,3} \cap W_{d,4},\\

v_{d,5}:= &([1:\zeta^2:\zeta], ~~[1:1:1], ~~[1:\zeta^2:\zeta], ~~[1:1:1], \dots ) &\in W_{d,1} \cap W_{d,6},\\

v_{d,6}:= &([1:\zeta^2:\zeta], ~~[1:\zeta^2:\zeta], ~~[1:\zeta^2:\zeta], ~~[1:\zeta^2:\zeta], \dots )  &\in W_{d,5} \cap W_{d,6}.
\end{array}\]
\noindent where $\zeta = e^{2 \pi i/3}$.
\end{rmk}


\subsection{Computing the dimension of $B_d$}

Our objective in this section is to prove

\begin{prop} \label{prop:|B_d|}
For $d \geq 1$, $\dim_k B_d
= 3\left(2^{\lfloor \frac{d+1}{2} \rfloor} +2^{\lceil \frac{d-1}{2} \rceil}\right) -6$.
\end{prop}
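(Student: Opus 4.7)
The plan is to exploit the decomposition $V_d = \bigcup_{i=1}^6 W_{d,i}$ of Proposition \ref{prop:gammadparam}, whose pairwise intersections consist exactly of the six isolated points of Remark \ref{rmk:Sing(V_d)} and whose triple intersections are empty. This structure gives a Mayer--Vietoris short exact sequence of sheaves on $V_d$
\[
0 \to \mathcal{O}_{V_d} \to \bigoplus_{i=1}^{6} (\iota_i)_\ast \mathcal{O}_{W_{d,i}} \to \bigoplus_{k=1}^{6} (\jmath_k)_\ast \mathcal{O}_{v_{d,k}} \to 0,
\]
where the second map sends a tuple of local sections to the differences of their restrictions across each singular point. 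Twisting by the locally free sheaf $\mathcal{L}_d$ preserves exactness.

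The first computational step is to read off $H^0$ on each component. Each $W_{d,i}$ is a product in which every factor is either a single point or one of the three projective lines $\mathbb{P}^1_A, \mathbb{P}^1_B, \mathbb{P}^1_C$ from (\ref{eq:P^1s}), and $\mathcal{O}_{\mathbb{P}^2}(1)$ pulls back to $\mathcal{O}_{\mathbb{P}^1}(1)$ on each line factor and to the trivial bundle on each point factor. By the Künneth formula, $\dim_k H^0(W_{d,i}, \mathcal{L}_d|_{W_{d,i}}) = 2^{n_i}$, where $n_i$ is the number of $\mathbb{P}^1$ factors of $W_{d,i}$. Inspecting Proposition \ref{prop:gammadparam}: for odd $d$ exactly three of the $W_{d,i}$ have $n_i = (d+1)/2$ and three have $n_i = (d-1)/2$, while for even $d$ all six have $n_i = d/2$. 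In either parity this yields $\sum_{i=1}^{6} 2^{n_i} = 3\bigl(2^{\lfloor (d+1)/2 \rfloor} + 2^{\lceil (d-1)/2 \rceil}\bigr)$.

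Next I would apply $H^0$ to the twisted Mayer--Vietoris sequence to obtain
\[
0 \to B_d \to \bigoplus_i H^0(W_{d,i}, \mathcal{L}_d|_{W_{d,i}}) \;\xrightarrow{\;\varphi\;}\; \bigoplus_k H^0(v_{d,k}, \mathcal{L}_d|_{v_{d,k}}) \to \cdots,
\]
and then show that $\varphi$ is surjective; since each $H^0(v_{d,k}, \mathcal{L}_d|_{v_{d,k}})$ is one-dimensional, this yields $\dim_k B_d = \sum_i 2^{n_i} - 6$ and hence the claimed formula. The base case $d=1$, where $V_1 = \mathbb{P}^2$ gives $B_1 = H^0(\mathbb{P}^2, \mathcal{O}(1))$ of dimension $3 = 3(2+1)-6$, is handled separately. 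Surjectivity of $\varphi$ is the only step requiring genuine verification: because each $\mathcal{L}_d|_{W_{d,i}}$ is very ample on its product of projective spaces and, by Remark \ref{rmk:Sing(V_d)}, each $W_{d,i}$ contains only two of the six singular points, the restriction map from $H^0(W_{d,i}, \mathcal{L}_d|_{W_{d,i}})$ to the values at those two points is surjective; assembling these component-wise surjections produces sections attaining any prescribed value at each $v_{d,k}$, which suffices to surject onto $\bigoplus_k k$.

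The principal obstacle I anticipate is bookkeeping: making sure the Mayer--Vietoris sequence is correctly formulated for this reducible scheme whose components have differing dimensions, and in particular that the exactness on the right is justified. This reduces to the two properties that at each singular $v_{d,k}$ only two components meet, and no triple intersections occur among the $W_{d,i}$ — both of which are made explicit in Remark \ref{rmk:Sing(V_d)}.
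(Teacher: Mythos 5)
Your proposal is correct and is essentially the paper's own argument: the paper uses the normalization $\pi\colon V_d' \to V_d$ (disjoint union of the six $W_{d,i}$), giving exactly your short exact sequence with skyscraper cokernel supported at the six singular points, computes $h^0$ of each component by the K\"unneth formula, and proves surjectivity onto the skyscraper sections by the same very-ampleness/point-separation construction (extend by zero off the chosen component), which in the paper is packaged as the claim $H^1(V_d,\mc{O}_{V_d}(\ones))=0$. The only differences are cosmetic: you phrase the sequence Mayer--Vietoris style and check $d=1$ separately, while the paper had already computed $B_1$ directly.
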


For the rest of the section, let $\ones$ denote a sequence of 1s of appropriate length.
Now consider the normalization morphism $\pi: V_d' \ra V_d$ where
$V_d'$ is the disjoint union of the six subsets $\{W_{d,i}\}_{i=1}^6$ mentioned in Proposition \ref{prop:gammadparam}. This map induces the following short exact sequence of sheaves on $V_d$:
\begin{equation} \label{eq:normseq}
0 \ra \mc{O}_{V_d}(\ones) \ra (\pi_{\ast} \mc{O}_{V_d'})(\ones) \ra \mc{S}(\ones) \ra 0,
\end{equation}

\noindent where $\mc{S}$ is the skyscraper sheaf whose support is Sing($V_d$), that is $\mc{S} = \bigoplus_{k=1}^6 \mc{O}_{\{v_{d,k}\}}$.
\smallskip

Note that we have
\begin{equation}\label{kvs}
H^0(V_d, (\pi_{\ast} \mc{O}_{V_d'})(\ones)) \underset{k-\text{v.s.}}{\cong} H^0(V_d', \mc{O}_{V_d'}(\ones))
\end{equation}
since the normalization morphism is a finite map, which in turn is an affine map \cite[Exercises II.5.17(b), III.4.1]{Ha}. To complete the proof of the proposition, we make the following assertion:

\smallskip

\noindent {\bf Claim}: $H^1(V_d, \mc{O}_{V_d}(\ones)) = 0$.
\smallskip

\noindent Assuming that the claim holds, we get from (\ref{eq:normseq}) the following long exact sequence of cohomology: 
\[
\begin{array}{lll}
0 \ra H^0(V_d,\mc{O}_{V_d}(\ones)) &\ra H^0(V_d,(\pi_{\ast}\mc{O}_{V'_d})(\ones))\\ &\ra H^0(V_d,\mc{S}(\ones))
 ~\ra~ H^1(V_d,\mc{O}_{V_d}(\ones))=0.
\end{array}
\]

\noindent Thus, with writing $h^0(X,\mc{L}) = \dim_k H^0(X,\mc{L})$,  (\ref{kvs}) implies that
\begin{center}
$\begin{array}{ll}
\dim_k B_d = h^0(\mc{O}_{V_d}(\ones))
&= h^0((\pi_{\ast} \mc{O}_{V_d'})(\ones)) - h^0(\mc{S}(\ones))\\
&= h^0(\mc{O}_{V_d'}(\ones)) - h^0(\mc{S}(\ones))\\
&= \sum_{i=1}^6 h^0(\mc{O}_{W_{d,i}}(\ones)) - 6.
\end{array}$
\end{center}
\medskip

\noindent Therefore applying Proposition \ref{prop:paramspace} and K\"unneth's Formula \cite[A.10.37]{BoG} completes the proof of Proposition \ref{prop:|B_d|}.
It now remains to verify the claim. 
\medskip

\noindent{\it Proof of Claim}: By the discussion above, it suffices to show that $$\delta_d: H^0(V_d', \mc{O}_{V_d'}(\ones)) \ra H^0\left(\bigcup_{k=1}^6 \{v_{d,k}\}, ~\mc{S}(\ones)\right)$$ is surjective. Referring to the notation of Proposition \ref{prop:gammadparam} and Remark \ref{rmk:Sing(V_d)}, we choose $v_{d,i} \in$ Supp$(\mc{S}(\ones))$ and $W_{d,k_i}$ containing $v_{d,i}$. This $W_{d,k_i}$ contains precisely two points of Supp($\mc{S}(\ones)$) and say the other is $v_{d,j}$ for $j\neq i$.
After choosing a basis $\{t_i\}_{i=1}^6$ for the six-dimensional vector space $H^0(\mc{S}(\ones))$ where $t_i(v_{d,j}) = \delta_{ij}$, we construct a preimage of each $t_i$. Since $\mc{O}_{W_{d,k_i}}(\ones)$ is a very ample sheaf, it separates points. In other words there exists $\tilde s_i \in H^0(\mc{O}_{W_{d,k_i}}(\ones))$ such that $\tilde s_i(v_{d,j}) = \delta_{ij}$. Extend this section $\tilde s_i$ to $s_i \in H^0(\mc{O}_{V_d'}(\ones))$ by declaring $s_i = \tilde s_i$ on $W_{d,k_i}$ and $s_i = 0$ elsewhere. Thus $\delta_d(s_i) = t_i$ for all $i$ and the map $\delta_d$ is surjective as desired. \qed

\medskip
This concludes the proof of Proposition \ref{prop:|B_d|}.

\begin{cor} \label{cor:|B_d|} We have $\lim_{d \ra \infty} (\dim_k B_d)^{1/d} = \sqrt{2} > 1$ so $B$ has exponential growth hence infinite GK dimension. By \cite[Theorem 0.1]{SteZ}, $B$ is not left or right Noetherian. \qed
\end{cor}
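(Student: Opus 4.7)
The plan is to read off the asymptotic growth rate of $\dim_k B_d$ directly from the closed formula in Proposition \ref{prop:|B_d|}, and then feed this into standard results on GK dimension and Noetherianness of connected graded algebras.

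First I would separate the formula $\dim_k B_d = 3\bigl(2^{\lfloor (d+1)/2 \rfloor} + 2^{\lceil (d-1)/2 \rceil}\bigr) - 6$ into two cases according to the parity of $d$. For $d$ even one gets $\dim_k B_d = 6 \cdot 2^{d/2} - 6$, and for $d$ odd one gets $\dim_k B_d = 9 \cdot 2^{(d-1)/2} - 6$. In particular, in both cases there exist constants $0 < c_1 \leq c_2$ (independent of $d$) with $c_1 \cdot 2^{d/2} \leq \dim_k B_d \leq c_2 \cdot 2^{d/2}$ for all sufficiently large $d$.

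Taking $d$-th roots and using $c_i^{1/d} \to 1$, this sandwich immediately gives $\lim_{d \to \infty} (\dim_k B_d)^{1/d} = \sqrt{2}$. Since $\sqrt{2} > 1$, the graded algebra $B$ has exponential growth, so by the definition of Gelfand-Kirillov dimension via $\mathrm{GKdim}(B) = \limsup_{d} \log_d\bigl(\sum_{i \le d} \dim_k B_i\bigr)$, we conclude $\mathrm{GKdim}(B) = \infty$.

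Finally, for the non-Noetherian conclusion I would invoke \cite[Theorem 0.1]{SteZ} exactly as in the proof of Corollary \ref{cor:S(1bc)}: that result states that a finitely graded algebra of exponential growth cannot be right (or left) Noetherian, and $B$ is finitely generated in degree one by Theorem \ref{thm:BforSdeg}, so the hypotheses apply. There is essentially no obstacle here beyond the elementary parity bookkeeping in the first step; the statement is really a packaging of Proposition \ref{prop:|B_d|} together with the Stephenson-Zhang theorem.
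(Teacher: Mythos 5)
Your argument is correct and matches the paper's (the paper treats this corollary as immediate from Proposition \ref{prop:|B_d|} together with \cite[Theorem 0.1]{SteZ}, exactly the packaging you describe); your parity computation $6\cdot 2^{d/2}-6$ versus $9\cdot 2^{(d-1)/2}-6$ and the sandwich giving limit $\sqrt{2}$ are accurate. The only cosmetic remark is that invoking Theorem \ref{thm:BforSdeg} (generation in degree one) is unnecessary, since the Stephenson--Zhang theorem applies to any locally finite $\N$-graded algebra, though citing it causes no circularity because its proof does not depend on this corollary.
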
                  

On the other hand, we can also determine the Hilbert series of $B$.

\begin{prop} \label{prop:H_B(t)} $\ds H_B(t) = \frac{(1+t^2)(1+2t)}{(1-2t^2)(1-t)}.$
\end{prop}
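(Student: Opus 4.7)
The proof is a direct computation once Proposition \ref{prop:|B_d|} is in hand, so the plan is essentially an exercise in summing geometric series.

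First, I would record the values of $\dim_k B_d$ by separating the parity. Using $B_0=k$ and the formula $\dim_k B_d=3(2^{\lfloor (d+1)/2\rfloor}+2^{\lceil (d-1)/2\rceil})-6$ for $d\geq 1$, I get
\[
\dim_k B_{2m}=6\cdot 2^m-6\quad (m\geq 1),\qquad \dim_k B_{2m+1}=9\cdot 2^m-6\quad (m\geq 0).
\]
A quick sanity check against small $d$ (giving the sequence $1,3,6,12,18,30,42,\dots$) confirms the recurrence $a_d=a_{d-1}+2a_{d-2}-2a_{d-3}$ for $d\geq 4$, which is exactly the recursion encoded by the denominator $(1-2t^2)(1-t)=1-t-2t^2+2t^3$ of the claimed Hilbert series; this already tells me what to expect.

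Next I would split $H_B(t)=1+\sum_{m\geq 0}\dim_k B_{2m+1}t^{2m+1}+\sum_{m\geq 1}\dim_k B_{2m}t^{2m}$ and evaluate each piece as a geometric series in $2t^2$ or in $t^2$. Namely,
\[
\sum_{m\geq 0}9\cdot 2^m t^{2m+1}=\frac{9t}{1-2t^2},\qquad \sum_{m\geq 0}6t^{2m+1}=\frac{6t}{1-t^2},
\]
and analogously for the even-degree contributions, giving
\[
H_B(t)=1+\frac{9t+12t^2}{1-2t^2}-\frac{6t+6t^2}{1-t^2}=1+\frac{3t(3+4t)}{1-2t^2}-\frac{6t}{1-t},
\]
after cancelling the factor $1+t$ in the last term.

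Finally I would place everything over the common denominator $(1-2t^2)(1-t)$. Expanding the numerator gives
\[
(1-2t^2)(1-t)+3t(3+4t)(1-t)-6t(1-2t^2)=1+2t+t^2+2t^3=(1+t^2)(1+2t),
\]
which produces the desired closed form. The only potential obstacle is bookkeeping in the algebraic simplification; there is no geometric or cohomological difficulty here since all of the nontrivial work has been absorbed into Proposition \ref{prop:|B_d|}.
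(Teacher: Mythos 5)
Your computation is correct and follows essentially the same route as the paper: both proofs simply sum the dimension formula of Proposition \ref{prop:|B_d|} as geometric series (you split by parity of $d$, the paper reindexes via generating functions for $2^{\lceil d/2\rceil}$ and $2^{\lfloor d/2\rfloor}$) and simplify to the stated rational function. The bookkeeping checks out, including the numerator identity $(1-2t^2)(1-t)+3t(3+4t)(1-t)-6t(1-2t^2)=(1+t^2)(1+2t)$.
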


\begin{proof}
Recall from Proposition \ref{prop:|B_d|} that $\dim_k B_d
= 3\left(2^{\lceil \frac{d-1}{2} \rceil} +2^{\lfloor \frac{d+1}{2} \rfloor}\right) -6$ for $d \geq 1$ and that $\dim_k B_0 =1$.
Thus
{\footnotesize \begin{center}
$\begin{array}{ll}
H_B(t) &= \ds 1 + 3 \left( \sum_{d \geq 1} 2^{\lceil \frac{d-1}{2} \rceil}t^d + \sum_{d \geq 1} 2^{\lfloor \frac{d+1}{2} \rfloor} t^d - 2 \sum_{d \geq 1} t^d \right)\\

&= \ds 1 + 3 \left( t\sum_{d \geq 0} 2^{\lceil \frac{d}{2} \rceil}t^d + 2t\sum_{d \geq 0} 2^{\lfloor \frac{d}{2} \rfloor} t^d - 2t \sum_{d \geq 0} t^d \right).
\end{array}$
\end{center}}
\noindent Consider generating functions $a(t) = \sum_{d \geq 0} a_d t^d$ and $b(t)=\sum_{d \geq 0} b_d t^d$ for the respective sequences $a_d=2^{\lceil d/2 \rceil}$ and $b_d=2^{\lfloor d/2 \rfloor}$.
Elementary operations result in $a(t) = \frac{1+2t}{1-2t^2}$ and $b(t)=\frac{1+t}{1-2t^2}$. Hence
{\footnotesize
$$H_B(t) = \ds 1 + 3 \left[ t\left(\frac{1+2t}{1-2t^2} \right) + 2t\left( \frac{1+t}{1-2t^2}\right) - 2t \left(\frac{1}{1-t}\right) \right] = \ds \frac{(1+t^2)(1+2t)}{(1-2t^2)(1-t)}.$$}
\end{proof}       


\subsection{The multiplication maps $\mu_{ij}: B_i \times B_j \ra B_{i+j}$}

In this section we examine the multiplication of the point parameter ring $B$ of $S(1,1,1)$. In particular, we show that the multiplication maps are surjective which results in the following theorem.

\begin{thm} \label{thm:Bgenindeg1}
The point parameter ring $B$ of $S(1,1,1)$ is generated in degree one.
\end{thm}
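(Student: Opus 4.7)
The plan is to prove the equivalent claim that the multiplication map $\mu_{1,d-1}: B_1 \otimes B_{d-1} \to B_d$ is surjective for every $d \geq 2$, by induction on $d$. The base case $d = 2$ is a direct computation: the $9$-dimensional space $B_1 \otimes B_1$ of bilinear monomials maps onto $B_2$ because $V_2 \subset \mathbb{P}^2 \times \mathbb{P}^2$ is cut out scheme-theoretically by the three bilinear forms $f_0, g_0, h_0$ of Definition \ref{def:truncptsch}, so the image has dimension $9 - 3 = 6 = \dim B_2$ by Proposition \ref{prop:|B_d|}.

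For the inductive step ($d \geq 3$) I would exploit the product structure $W_{d,i} = F_i \times W_{d-1,\tau(i)}$ read off Proposition \ref{prop:gammadparam}, where $F_i$ is either a line $\mathbb{P}^1_X$ (for $X \in \{A,B,C\}$) or one of the three cube-root-of-unity points, and $\tau = (1\,2)(3\,4)(5\,6)$ describes how $pr_{2,\dots,d}$ permutes the components. The first ingredient is a key restriction lemma: $B_{d-1} \to H^0(W_{d-1,j}, \mathcal{O}(\ones))$ is surjective for each $j$. Indeed each $W_{d-1,j}$ meets the rest of $V_{d-1}$ at exactly two singular points (Remark \ref{rmk:Sing(V_d)}), and since $\mathcal{O}(\ones)$ is very ample on every $W_{d-1,j'}$ (a product of $\mathbb{P}^1$'s and fixed points), any section on $W_{d-1,j}$ extends to a compatible tuple on the normalization $V_{d-1}'$ by prescribing the two required values at the singular points of each adjacent component. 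Combining this with the surjection $B_1 \twoheadrightarrow H^0(F_i, \mathcal{O}(1))$ and K\"unneth's formula yields that the component-wise multiplication $\tilde\mu: B_1 \otimes \tilde B_{d-1} \to \tilde B_d := \bigoplus_i H^0(W_{d,i}, \mathcal{O}(\ones))$ is surjective.

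To compare $\mu$ with $\tilde\mu$, I would apply the snake lemma to the commutative diagram with exact rows (top by flatness of $B_1/k$, bottom by the normalization sequence used in the proof of Proposition \ref{prop:|B_d|})
\[
\begin{matrix}
0 \to & B_1 \otimes B_{d-1} & \to & B_1 \otimes \tilde B_{d-1} & \to & B_1 \otimes k^6 & \to 0 \\
& \downarrow \mu & & \downarrow \tilde\mu & & \downarrow \bar\mu & \\
0 \to & B_d & \to & \tilde B_d & \to & k^6 & \to 0.
\end{matrix}
\]
An explicit computation shows that $\bar\mu$ has the form $(s_j)_j \mapsto (s_{\sigma(k)}(p_k))_k$ for a permutation $\sigma$ induced by the action of $pr_{2,\dots,d}$ on singular points and the six points $p_k := pr_1(v_{d,k})$; this is surjective via any $t \in B_1$ nonvanishing at every $p_k$. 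The main obstacle is verifying that $\ker\tilde\mu \to \ker\bar\mu$ is surjective. Position-by-position, the target at index $k$ is the $2$-dimensional subspace of $B_1$ of forms vanishing at $p_{\sigma^{-1}(k)}$, while the source contributes $\ker(B_1 \to H^0(F_{\tau(i_k)}, \mathcal{O}(1)))$ and $\ker(B_1 \to H^0(F_{\tau(j_k)}, \mathcal{O}(1)))$ from the two components $W_{d-1,i_k}, W_{d-1,j_k}$ meeting at $v_{d-1,k}$. Their sum exhausts the target thanks to the coincidence that each of the lines $\mathbb{P}^1_A, \mathbb{P}^1_B, \mathbb{P}^1_C$ passes through exactly two of the three cube-root-of-unity points (and each such point lies on exactly two of the three lines), so every $1$-dimensional defining-line contribution lies inside the relevant $2$-dimensional target subspace. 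With this check, the snake lemma gives $\operatorname{coker}\mu = 0$ and the induction is complete.
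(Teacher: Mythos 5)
Your overall strategy is viable, and at its core it is a global--sections reformulation of the paper's own argument: the paper also splits $V_d$ into the components $W_{d,i}$ via the normalization sequence, runs a snake-lemma comparison between the $V$-level and component-level data (Diagram 2, with Lemmas \ref{lem:beta} and \ref{lem:psisurj}), and uses very ampleness to separate the two singular points on a component together with the linear forms cutting out the cube-root points $[1:\omega:\omega^2]$ --- exactly the ingredients in your inductive step. However, as written there are two genuine gaps. First, the base case: from ``$V_2$ is cut out scheme-theoretically by $f_0,g_0,h_0$'' you cannot conclude that the kernel of $B_1\otimes B_1\to B_2$ is only $3$-dimensional. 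That kernel is $H^0(\mathcal{I}_{V_2}(1,1))$, which contains the span of the three forms but a priori could be larger; equality requires showing that $H^0(\mathcal{O}_{\mathbb{P}^2\times\mathbb{P}^2}^{\oplus 3})\to H^0(\mathcal{I}_{V_2}(1,1))$ is onto, i.e.\ that $H^1$ of the syzygy sheaf vanishes. This is precisely what the paper's proof of Proposition \ref{prop:H^1(IV)} in the case $d=1$ does via the resolution of $\mathcal{I}_{V_2}$; your proposal needs that computation (or a direct check on the six lines of Lemma \ref{lem:gamma2param}) inserted.

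Second, the justification of the key step ``$\ker\tilde\mu\to\ker\bar\mu$ is surjective'' is not the right one. At a singular point $v_{d-1,k}$ the two components through it have opposite parity, so under $\tau$ one contributes the $1$-dimensional span of the linear form defining a line $\mathbb{P}^1_X$ and the other contributes the $2$-dimensional space of forms vanishing at a cube-root point. Your stated reason --- that the line-form contribution lies \emph{inside} the $2$-dimensional target --- only gives an inclusion and can never by itself give exhaustion. What actually makes the sum exhaust the target (and what you must check, e.g.\ from Proposition \ref{prop:gammadparam} and Remark \ref{rmk:Sing(V_d)}) is that the cube-root point $F_{\tau(m)}$ attached to the odd-indexed component through $v_{d-1,k}$ coincides with $p_{\sigma^{-1}(k)}=pr_1(v_{d,\sigma^{-1}(k)})$, so the $2$-dimensional kernel is literally equal to the target subspace; this is the same coincidence the paper exploits in Step 3 of Lemma \ref{lem:psisurj}, where the section $\tilde s_{k_0}\cdot[a(\omega x_d-y_d)+b(\omega^2 x_d-z_d)]$ restricts to zero on $W_{d+1,i_{k_0}}$ because $pr_{d+1}(W_{d+1,i_{k_0}})=[1:\omega:\omega^2]$. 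You should also make explicit the localization device (choose the section on $W_{d-1,m}$ vanishing at the other singular point of that component, using very ampleness) so that your position-by-position generation does not pollute the other coordinates of $k^6$. With these repairs your induction closes and is essentially equivalent to the paper's proof; as a side remark, your ``restriction lemma'' is not actually needed for the surjectivity of $\tilde\mu$, which follows from K\"unneth and $B_1\twoheadrightarrow H^0(F_i,\mathcal{O}(1))$ alone.
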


\noindent With similar reasoning, $B = B(S_{deg})$ is generated in degree one for all $S_{deg}$.

\begin{proof} It suffices to prove that the multiplication maps $\mu_{d,1}: B_d \times B_1 \ra B_{d+1}$ are surjective for $d \geq 1$. Recall from Definition \ref{def:ptparamS} that $\mu_{d,1} = H^0(m_d)$ where $m_d$ is the isomorphism 
$$m_d: \mc{O}_{V_d \times \mathbb{P}^2}(1,\dots,1,0) \ten_{\mc{O}_{V_{d+1}}} \mc{O}_{(\mathbb{P}^2)^{\times d}}(0,\dots,0,1) \ra \mc{O}_{V_{d+1}}(1,\dots,1).$$
To use the isomorphism $m_d$, we employ the following commutative diagram:
\bigskip

\begin{equation}\label{eq:m_d}
\end{equation}
\vspace{-.9in}

\xymatrix{
\mc{O}_{V_d \times \mathbb{P}^2}(1,\dots,1,0) \ten_{\mc{O}_{(\mathbb{P}^2)^{\times d+1}}} \mc{O}_{(\mathbb{P}^2)^{\times d+1}}(0,\dots,0,1) \ar@{->}[d] \ar@{->}[dr]^{t_d}\\
\mc{O}_{V_d \times \mathbb{P}^2}(1,\dots,1,0) \ten_{\mc{O}_{V_{d+1}}} \mc{O}_{(\mathbb{P}^2)^{\times d+1}}(0,\dots,0,1) \ar@{->}[r]^{\hspace{1in}m_d} & \mc{O}_{V_{d+1}}(1,\dots,1).
}
\bigskip

\noindent The source of $t_d$ is isomorphic to $\mc{O}_{V_d \times \mathbb{P}^2}(1,\dots,1)$ and the map $t_d$ is given by restriction to $V_{d+1}$. Hence we have the short exact sequence
\begin{equation}\label{eq:sesfort_d}
0 \lra \mc{I}_{\frac{V_{d+1}}{V_d \times \mathbb{P}^2}}(\ones) \lra \mc{O}_{V_d \times \mathbb{P}^2}(\ones) \overset{t_d}{\lra} \mc{O}_{V_{d+1}}(\ones) \lra 0,
\end{equation}
where $\mc{I}_{\frac{V_{d+1}}{V_d \times \mathbb{P}^2}}$ is the ideal sheaf of $V_{d+1}$ defined in $V_d \times \mathbb{P}^2$.
Since the K\"unneth formula and the claim from \S4.1 implies that $H^1(\mc{O}_{V_d \times \mathbb{P}^2}(\ones)) = 0$, the cokernel of $H^0(t_d)$ is $H^1\Big(\mc{I}_{\frac{V_{d+1}}{V_d \times \mathbb{P}^2}}(\ones)\Big)$. Now we assert

\begin{prop}\label{prop:H^1(IV)}
$H^1\Big(\mc{I}_{\frac{V_{d+1}}{V_d \times \mathbb{P}^2}}(\ones)\Big) = 0$ for $d \geq 1$.
\end{prop}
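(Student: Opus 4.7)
My plan is to reduce the vanishing of $H^1(\mc{I}(\ones))$ to a surjectivity statement on global sections, then apply the normalization argument from the Claim in \S 4.1 to both $V_d \times \mathbb{P}^2$ and $V_{d+1}$ simultaneously, and finish with a snake-lemma diagram chase.

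First, the long exact cohomology sequence of (\ref{eq:sesfort_d}) identifies $H^1(\mc{I}(\ones))$ with the cokernel of the restriction map
\[
\rho \colon H^0(V_d \times \mathbb{P}^2, \mc{O}(\ones)) \lra H^0(V_{d+1}, \mc{O}(\ones)),
\]
provided $H^1(V_d \times \mathbb{P}^2, \mc{O}(\ones)) = 0$. The K\"unneth formula gives this last $H^1$ as pieces involving $H^1(V_d, \mc{O}(\ones))$ (which is $0$ by the Claim of \S 4.1) and $H^1(\mathbb{P}^2, \mc{O}(1))$ (which is $0$). So the problem reduces to showing $\rho$ is surjective.

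Second, I would repeat the normalization argument of the Claim simultaneously on both ambient spaces. The normalization of $V_d$ is $\bigsqcup_{i=1}^6 W_{d,i}$ with singular locus $\{v_{d,k}\}_{k=1}^6$ (Remark \ref{rmk:Sing(V_d)}); a parallel description holds for $V_{d+1}$. Taking $H^0$ of the twisted normalization sequences, and using the two vanishings $H^1(V_d \times \mathbb{P}^2, \mc{O}(\ones)) = H^1(V_{d+1}, \mc{O}(\ones)) = 0$, produces two short exact sequences. The natural restriction from $V_d \times \mathbb{P}^2$ to $V_{d+1}$ fits these into a commutative diagram whose middle vertical arrow is a direct sum of component-wise restrictions $r_i \colon H^0(\mc{O}_{W_{d,i} \times \mathbb{P}^2}(\ones)) \to H^0(\mc{O}_{W_{d+1,i}}(\ones))$, and whose right-hand vertical arrow $r''$ sends the ``difference at $v_{d,k}$'' (an element of $H^0(\mathbb{P}^2, \mc{O}(1))$) to its evaluation at the last coordinate of $v_{d+1,k}$.

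Third, to verify that $r$ and $r''$ are surjective: by Lemmas \ref{lem:notinZ_i}--\ref{lem:inZ_i} together with the explicit patterns in Proposition \ref{prop:gammadparam}, each $W_{d+1,i}$ embeds as $W_{d,i} \times Y_i \subset W_{d,i} \times \mathbb{P}^2$, where $Y_i$ is either a single point of $Z_1$ (when the last entry of $W_{d,i}$ varies in a $\mathbb{P}^1$, so Lemma \ref{lem:notinZ_i} gives a unique extension) or one of the lines $\mathbb{P}^1_A, \mathbb{P}^1_B, \mathbb{P}^1_C$ (when the last entry of $W_{d,i}$ is a fixed point of $Z_1$, so Lemma \ref{lem:inZ_i} gives a $\mathbb{P}^1$ of extensions). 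By K\"unneth, surjectivity of each $r_i$ reduces to surjectivity of $H^0(\mathbb{P}^2, \mc{O}(1)) \twoheadrightarrow H^0(Y_i, \mc{O}(1))$, which is obvious in both cases; and $r''$ is surjective since $\mc{O}_{\mathbb{P}^2}(1)$ is globally generated. A standard snake-lemma chase then forces $\rho$ to be surjective.

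The main obstacle will be the bookkeeping needed to make the diagram genuinely commute. Specifically I need to check that the projection $pr_{1,\dots,d}$ induces an indexing-preserving bijection $W_{d+1,i} \leftrightarrow W_{d,i}$ between components, and that $v_{d+1,k}$ projects to $v_{d,k}$ under the same projection; only then does $r''$ arise as the single map described above rather than some more complicated combination. Both facts come from straightforward but slightly tedious pattern-matching against Proposition \ref{prop:gammadparam} and Remark \ref{rmk:Sing(V_d)}.
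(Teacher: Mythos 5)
Your reduction to the surjectivity of $\rho\colon H^0(V_d\times\mathbb{P}^2,\mc{O}(\ones))\ra H^0(V_{d+1},\mc{O}(\ones))$ and your two normalization rows are exactly the paper's Diagram 1/2 setup, and your surjectivity of the componentwise restrictions $r_i$ is (modulo the vanishing of $H^1$ of the ambient sheaves) equivalent to the paper's Lemma \ref{lem:beta}. The gap is in the last sentence: in a morphism of short exact sequences, surjectivity of the middle and right vertical arrows does \emph{not} force surjectivity of the left one. The snake lemma gives $\ker r \ra \ker r'' \ra \mathrm{coker}\,\rho \ra \mathrm{coker}\, r = 0$, so $\mathrm{coker}\,\rho$ is the cokernel of the induced map on kernels, i.e.\ of $\psi\colon \bigoplus_i H^0\big(\mc{I}_{\frac{W_{d+1,i}}{W_{d,i}\times\mathbb{P}^2}}(\ones)\big) \ra \bigoplus_k H^0\big(\mc{I}_{\frac{\{v_{d+1,k}\}}{\{v_{d,k}\}\times\mathbb{P}^2}}(\ones)\big)$. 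Concretely, in your chase: after lifting a section of $\mc{O}_{V_{d+1}}(\ones)$ componentwise via the $r_i$, the lifts need not agree at the six singular points; the discrepancy lies in $\ker r''$, and you must correct it by an element of $\ker r$ mapping onto it. Proving that this correction exists is the actual content (the paper's Lemma \ref{lem:psisurj}), and it is not formal: for a singular point $v_{d,k}$ the two components $W_{d,i}$ through it extend differently to level $d+1$ (on one the $(d+1)$-st coordinate is the fixed point $[1:\omega:\omega^2]$, on the other it sweeps a line), and only on the component of the correct parity does the product of a separating section with the linear form $a(\omega x_d-y_d)+b(\omega^2 x_d-z_d)$ actually vanish on $W_{d+1,i}$, i.e.\ lie in $\ker r$. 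Your proposal never makes this choice, so the chase does not close.

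A secondary point: your component-by-component matching $W_{d+1,i}=W_{d,i}\times Y_i$ presupposes the six-component description of $V_d$, which holds only for $d\geq 2$; since $V_1=\mathbb{P}^2$ is irreducible, the case $d=1$ needs a separate argument (the paper does it by resolving $\mc{I}_{V_2}$ on $\mathbb{P}^2\times\mathbb{P}^2$ and a dimension count giving $h^0(\mc{I}_{V_2}(1,1))=3$, $h^1=0$). So the statement for $d=1$ is also not covered by your outline.
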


\noindent By assuming that Proposition \ref{prop:H^1(IV)} holds, we get the surjectivity of $H^0(t_d)$ for $d \geq 1$. Now by applying the global section functor to Diagram (\ref{eq:m_d}), we have that $H^0(m_d) = \mu_{d,1}$ is surjective for $d \geq 1$. This concludes the proof of Theorem \ref{thm:Bgenindeg1}.
\end{proof}

\noindent{{\it Proof of \ref{prop:H^1(IV)}}.
Consider the case $d=1$. We study the ideal sheaf $\mc{I}_{\frac{V_2}{\mathbb{P}^2 \times \mathbb{P}^2}}:= \mc{I}_{V_2}$ by using the resolution of
the ideal of defining relations $(f_0,g_0,h_0)$ for $V_2$ (Eqs. (\ref{eq:multilin,rel})) in the $\N^2$-graded ring $R=k[x_0,y_0,z_0,x_1,y_1,z_1]$. Note that
each of the defining equations have bidegree (1,1) in $R$  and we get
the following resolution:
{\small \[
0 \ra \mc{O}_{\mathbb{P}^2 \times
\mathbb{P}^2}(-3,-3) \ra \mc{O}_{\mathbb{P}^2 \times
\mathbb{P}^2}(-2,-2)^{\oplus 3} \ra \mc{O}_{\mathbb{P}^2
\times \mathbb{P}^2}(-1,-1)^{\oplus 3} \ra \mc{I}_{V_2}
\ra 0.
\]}
\noindent Twisting the above sequence with $\mc{O}_{\mathbb{P}^2 \times \mathbb{P}^2}(1,1)$ we get
{\small\[
0 \ra
\mc{O}_{\mathbb{P}^2 \times \mathbb{P}^2}(-2,-2) \ra
\mc{O}_{\mathbb{P}^2 \times \mathbb{P}^2}(-1,-1)^{\oplus 3} \ra
\mc{O}_{\mathbb{P}^2 \times \mathbb{P}^2}^{\oplus 3} \overset{f}{\ra}
\mc{I}_{V_2}(1,1) \ra
0.
\]}
\noindent Let $\mc{K}=\ker(f)$. Then
$h^0(\mc{I}_{V_2}(1,1)) = 3  - h^0(\mc{K}) + h^1(\mc{K})$. On the other hand, $H^1(\mc{O}_{\mathbb{P}^2}(j)) = H^2(\mc{O}_{\mathbb{P}^2}(j) = 0$ for $j=-1,-2$. Thus the K\"unneth formula applied the cohomology of the short exact sequence
$$0 \ra \mc{O}_{\mathbb{P}^2 \times \mathbb{P}^2}(-2,-2) \ra \mc{O}_{\mathbb{P}^2 \times \mathbb{P}^2}(-1,-1)^{\oplus 3} \ra \mc{K} \ra 0$$ results in $h^0(\mc{K}) = h^1(\mc{K}) = 0$. Hence $h^0(\mc{I}_{V_2}(1,1))=3$.

Now using the long exact sequence of cohomology arising from the short exact sequence 
$$0 \ra \mc{I}_{V_2}(1,1) \ra \mc{O}_{\mathbb{P}^2 \times \mathbb{P}^2}(1,1) \ra \mc{O}_{V_2}(1,1) \ra 0,$$
and the facts:
\[
\begin{array}{l}
h^0(\mc{I}_{V_2}(1,1)) = 3,\\  
h^0(\mc{O}_{\mathbb{P}^2 \times \mathbb{P}^2}(1,1)) = 9\\
h^0(\mc{O}_{V_2}(1,1)) = \dim_k B_2 = 6,\\
 h^1(\mc{O}_{\mathbb{P}^2 \times \mathbb{P}^2}(1,1)) = 0,
\end{array}
\]
we conclude that $H^1(\mc{I}_{V_2}(1,1)) = 0$.

For $d \geq 2$ we will construct a commutative diagram to assist with the study of the cohomology of the ideal sheaf $\mc{I}_{\frac{V_{d+1}}{V_d \times \mathbb{P}^2}}(\ones)$.
Recall from (\ref{eq:normseq}) that we have the following normalization sequence for $V_d$:
\[\hspace{1in} 0 \lra \mc{O}_{V_d} \lra \ds \bigoplus_{i=1}^6 \mc{O}_{W_{d,i}} \lra \ds \bigoplus_{k=1}^6 \mc{O}_{\{v_{d,k}\}} \lra 0.\hspace{.6in} (\dag_d)\]
Consider the sequence 
$$pr_{1,\dots,d}^{\ast} \left( (\dag_d) \ten \mc{O}_{(\mathbb{P}^2)^{\times d}}(\ones) \right) \ten_{\mc{O}_{(\mathbb{P}^2)^{\times d+1}}} pr_{d+1}^{\ast} \mc{O}_{\mathbb{P}^2}(1)$$ and its induced sequence of restrictions to $V_{d+1}$, namely
{\footnotesize \begin{equation} \label{eq:rest}
0 \ra \mc{O}_{V_d \times \mathbb{P}^2}(\ones)\big|_{V_{d+1}} \ra \ds \bigoplus_{i=1}^6 \mc{O}_{W_{d,i} \times \mathbb{P}^2}(\ones)\big|_{V_{d+1}}
\ra \ds \bigoplus_{k=1}^6\mc{O}_{\{v_{d,k}\}\times \mathbb{P}^2}(\ones) \big|_{V_{d+1}} \ra 0.
\end{equation}}

\noindent Now $V_{d+1} \subseteq V_d \times \mathbb{P}^2$ and $(W_{d,i}\times\mathbb{P}^2)\cap V_{d+1} = W_{d+1,i}$ due to Proposition \ref{prop:gammadparam} and Remark \ref{rmk:Sing(V_d)}. We also have that $(\{v_{d,k}\}\times \mathbb{P}^2)\cap V_{d+1} = \{v_{d+1,k}\}$ for all $i$,$k$. Therefore the sequence (\ref{eq:rest}) is equal to $(\dag_{d+1}) \ten \mc{O}_{(\mathbb{P}^2)^{\times d+1}}(\ones)$. In other words, we are given the commutative diagram:
\begin{figure}[h!]
\[
\xymatrix@-1pc{
0 \ar@{->}[r] & \mc{O}_{V_d \times \mathbb{P}^2}(\ones) \ar@{->}[dd] \ar@{->}[r]
& \ds \bigoplus_{i=1}^6 \mc{O}_{W_{d,i} \times \mathbb{P}^2}(\ones) \ar@{->}[dd] \ar@{->}[r]
& \ds \bigoplus_{k=1}^6 \mc{O}_{\{v_{d,k}\} \times \mathbb{P}^2}(\ones) \ar@{->}[dd] \ar@{->}[r]  &  0 \\\\
0 \ar@{->}[r] & \mc{O}_{V_{d+1}}(\ones)  \ar@{->}[r]
&  \ds \bigoplus_{i=1}^6 \mc{O}_{W_{d+1,i}}(\ones) \ar@{->}[r]
& \ds \bigoplus_{k=1}^6 \mc{O}_{\{v_{d+1,k}\}}(\ones) \ar@{->}[r] &   0.
}
\]
\vspace{.1in}
$$\text{Diagram 1: Understanding~}\mc{I}_{\frac{V_{d+1}}{V_d \times \mathbb{P}^2}}(1,\dots,1).$$
\end{figure}

\noindent where the vertical maps are given by restriction to $V_{d+1}$.
Observe that the kernels of the vertical maps (from left to right) are respectively
$\mc{I}_{\frac{V_{d+1}}{V_d \times \mathbb{P}^2}}(\ones)$, $\ds \bigoplus_i \mc{I}_{\frac{W_{d+1,i}}{W_{d,i} \times \mathbb{P}^2}}(\ones)$, and $\ds \bigoplus_k \mc{I}_{\frac{\{v_{d+1,k}\}}{\{v_{d,k}\} \times \mathbb{P}^2}}(\ones)$, and the cokernels are all 0. 

By the claim in $\S4.1$ and the K\"unneth formula, we have that $$H^1(\mc{O}_{V_d \times \mathbb{P}^2}(\ones)) = H^1(\mc{O}_{V_{d+1}}(\ones)) = 0.$$ Hence the application of the global section functor to Diagram 1 yields Diagram 2 below.  
Now by the Snake Lemma, we get the following sequence:
{\small $$\hspace{1.19in} \dots \lra \ds \bigoplus_{i=1}^6 H^0\left(\mc{I}_{\frac{W_{d+1,i}}{W_{d,i} \times \mathbb{P}^2}}(\ones)\right)
\overset{\psi}{\lra} \ds \bigoplus_{k=1}^6 H^0\left(\mc{I}_{\frac{\{v_{d+1,k}\}}{\{v_{d,k}\} \times \mathbb{P}^2}}(\ones)\right)$$
$$\lra H^1\left(\mc{I}_{\frac{V_{d+1}}{V_d \times \mathbb{P}^2}}(\ones)\right)
\lra \ds \bigoplus_{i=1}^6 H^1\left(\mc{I}_{\frac{W_{d+1,i}}{W_{d,i} \times \mathbb{P}^2}}(\ones)\right) \ra \dots .\hspace{1.22in}$$}
\smallskip

In Lemma \ref{lem:beta}, we will show that $\ds \bigoplus_{i} H^1\Big(\mc{I}_{\frac{W_{d+1,i}}{W_{d,i} \times \mathbb{P}^2}}(\ones)\Big) = 0$ for $d \geq 2$. Furthermore the surjectivity of the map $\psi$ will follow from Lemma \ref{lem:psisurj}. This will complete the proof of Proposition \ref{prop:H^1(IV)}.

\begin{landscape}
{\small \[
\xymatrix@-1pc{
& 0 \ar@{->}[d] && 0 \ar@{->}[d] &&  0 \ar@{->}[d] & \\
& H^0\left(V_d \times \mathbb{P}^2, \mc{I}_{\frac{V_{d+1}}{V_d \times \mathbb{P}^2}}(\ones)\right) \ar@{->}[dd]&&
\ds \bigoplus_{i=1}^6 H^0\left(W_{d,i} \times \mathbb{P}^2, \mc{I}_{\frac{W_{d+1,i}}{W_{d,i} \times \mathbb{P}^2}}(\ones)\right) \ar@{->}[dd]_{\alpha}  \ar@{-->}[rr]^{\psi} && \ds \bigoplus_{k=1}^6 H^0\left(\{v_{d,k}\} \times \mathbb{P}^2,\mc{I}_{\frac{\{v_{d+1,k}\}}{\{v_{d,k}\} \times \mathbb{P}^2}}(\ones)\right) \ar@{->}[dd]^{\lambda} &\\\\
0 \ar@{->}[r] & H^0\left(V_d \times \mathbb{P}^2, \mc{O}_{V_d \times \mathbb{P}^2}(\ones) \right) \ar@{->}[rr] \ar@{->}[dd]&&
 \ds \bigoplus_{i=1}^6 H^0\left(W_{d,i}\times \mathbb{P}^2, \mc{O}_{W_{d,i} \times \mathbb{P}^2}(\ones) \right) \ar@{->}[dd]_{\beta} \ar@{->}[rr]^{\nu} &&  \ds \bigoplus_{k=1}^6 H^0\left(\{v_{d,k}\}\times \mathbb{P}^2, \mc{O}_{\{v_{d,k}\} \times \mathbb{P}^2}(\ones) \right) \ar@{->}[dd]^{\gamma} \ar@{->}[r] & 0\\\\
0 \ar@{->}[r] & H^0\left(V_{d+1}, \mc{O}_{V_{d+1}}(\ones) \right) \ar@{->}[rr] \ar@{->}[dd]&&
 \ds \bigoplus_{i=1}^6 H^0\left(W_{d+1,i}, \mc{O}_{W_{d+1,i}}(\ones) \right) \ar@{->}[rr] \ar@{->}[dd] && \ds \bigoplus_{k=1}^6 H^0\left(\{v_{d+1,k}\}, \mc{O}_{\{v_{d+1,k}\}}(\ones) \right) \ar@{->}[dd] \ar@{->}[r] & 0\\\\
& H^1\left(V_d \times \mathbb{P}^2, \mc{I}_{\frac{V_{d+1}}{V_d \times \mathbb{P}^2}}(\ones)\right) && \ds \bigoplus_{i=1}^6 H^1\left(W_{d,i} \times \mathbb{P}^2, \mc{I}_{\frac{W_{d+1,i}}{W_{d,i} \times \mathbb{P}^2}}(\ones)\right) && \ds \bigoplus_{k=1}^6 H^1\left(\{v_{d,k}\} \times \mathbb{P}^2,\mc{I}_{\frac{\{v_{d+1,k}\}}{\{v_{d,k}\} \times \mathbb{P}^2}}(\ones)\right) &
}
\]}
\vspace{.4in}
\begin{center}{Diagram 2: Induced Cohomology from Diagram 1}\end{center}
\end{landscape}

\begin{lem} \label{lem:beta} $\ds \bigoplus_{i=1}^6 H^1\Big(W_{d,i} \times \mathbb{P}^2,~ \mc{I}_{\frac{W_{d+1,i}}{W_{d,i}\times \mathbb{P}^2}}(\ones)\Big) = 0$ for $d \geq 2$.
\end{lem}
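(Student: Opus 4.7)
The plan is to analyze each of the six components $W_{d,i}$ separately. By Proposition \ref{prop:gammadparam}, each $W_{d,i}$ is isomorphic to a product of copies of $\mathbb{P}^1$ and single points embedded in $\mathbb{P}^2$, and the inclusion $W_{d+1,i} \hookrightarrow W_{d,i} \times \mathbb{P}^2$ is obtained by specifying a condition on the extra $\mathbb{P}^2$-factor. There are exactly two cases to consider depending on the parity of $d$ relative to $i$: either the $(d+1)$-st factor of $W_{d+1,i}$ is one of the lines $\mathbb{P}^1_A,\mathbb{P}^1_B,\mathbb{P}^1_C$ from (\ref{eq:P^1s}), or it is one of the distinguished points from $Z_1 \cup Z_2 \cup Z_3$ appearing in the alternating product.

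\textbf{Case A} (\emph{line case}): the additional factor is cut out in $\mathbb{P}^2$ by a single linear form. Pulling that form back along $pr_{d+1}$ identifies $W_{d+1,i}$ as an effective Cartier divisor in $W_{d,i} \times \mathbb{P}^2$, so the twisted ideal sheaf is
$$\mc{I}_{\frac{W_{d+1,i}}{W_{d,i} \times \mathbb{P}^2}}(\ones) \;\cong\; \mc{O}_{W_{d,i} \times \mathbb{P}^2}(1,\dots,1,0).$$
Its first cohomology vanishes by the K\"unneth formula: the $\mathbb{P}^1$-factors of $W_{d,i}$ contribute $H^1(\mathbb{P}^1,\mc{O}(1))=0$, the point-factors contribute only $H^0$, and on the last factor $H^1(\mathbb{P}^2,\mc{O}_{\mathbb{P}^2})=0$.

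\textbf{Case B} (\emph{point case}): the additional factor is a reduced point $p \in \mathbb{P}^2$. Pulling back the Koszul resolution
$$0 \to \mc{O}_{\mathbb{P}^2}(-2) \to \mc{O}_{\mathbb{P}^2}(-1)^{\oplus 2} \to \mc{I}_p \to 0$$
via $pr_{d+1}$ and twisting by $\mc{O}(\ones)$ yields
$$0 \to \mc{O}_{W_{d,i} \times \mathbb{P}^2}(1,\dots,1,-2) \to \mc{O}_{W_{d,i} \times \mathbb{P}^2}(1,\dots,1,-1)^{\oplus 2} \to \mc{I}_{\frac{W_{d+1,i}}{W_{d,i} \times \mathbb{P}^2}}(\ones) \to 0.$$
By K\"unneth, all cohomology of the two leftmost sheaves vanishes, because $H^j(\mathbb{P}^2,\mc{O}(-1))=0$ for every $j$ and (for the first term) $H^j(\mathbb{P}^2,\mc{O}(-2))=0$ for $j=0,1$. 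Chasing the long exact cohomology sequence associated to this three-term complex then forces $H^1\bigl(\mc{I}_{\frac{W_{d+1,i}}{W_{d,i}\times\mathbb{P}^2}}(\ones)\bigr)=0$; the hypothesis $d \geq 2$ ensures there is at least one $\mathbb{P}^1$-factor present so that the K\"unneth computations go through without degenerate issues.

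Summing over $i=1,\dots,6$ gives the asserted vanishing. The main obstacle is confirming that the scheme-theoretic intersection $(W_{d,i} \times \mathbb{P}^2) \cap V_{d+1}$ is reduced and equal to $W_{d+1,i}$, so that the defining ideal inside the extra $\mathbb{P}^2$-factor is genuinely a single linear form (Case A) or the full Koszul ideal of a reduced point (Case B), with no embedded components. Once this scheme-theoretic matching is in hand, the cohomological computation is a direct K\"unneth argument.
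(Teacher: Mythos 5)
Your proposal is correct and takes essentially the same route as the paper: the same parity case split, with the line case handled by identifying the twisted ideal sheaf with $\mc{O}_{W_{d,i}\times\mathbb{P}^2}(1,\dots,1,0)$ and the point case handled by the pulled-back Koszul resolution plus K\"unneth vanishing. The only slip is bookkeeping in Case B: twisting by $\mc{O}(\ones)$ makes the outer terms $\mc{O}_{W_{d,i}\times\mathbb{P}^2}(1,\dots,1,-1)$ and $\mc{O}_{W_{d,i}\times\mathbb{P}^2}(1,\dots,1,0)^{\oplus 2}$ (you left the last coordinate untwisted), but since $H^1(\mathbb{P}^2,\mc{O}_{\mathbb{P}^2})=0$ and all cohomology of $\mc{O}_{\mathbb{P}^2}(-1)$ vanishes, the needed vanishing of $H^1$ and $H^2$ still follows and the argument stands as in the paper.
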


\begin{proof} We consider the different parities of $d$ and $i$ separately.
For $d$ even and $i$ odd,
$$\mc{I}_{\frac{W_{d+1,i}}{W_{d,i} \times \mathbb{P}^2}} ~\cong~ \mc{O}_{W_{d,i} \times \mathbb{P}^2}(0,\dots,0,-1)$$
because $W_{d+1,i}$ is defined in $W_{d,i} \times \mathbb{P}^2$ by
one equation of degree $(0,\dots,0,1)$ (Proposition \ref{prop:gammadparam}).
Twisting by
$\mc{O}_{(\mathbb{P}^2)^{\times d+1}}(1,\dots,1)$ results in
\begin{equation} \label{eq:H^1(IVd)}
H^1\left(\mc{I}_{\frac{W_{d+1,i}}{W_{d,i}\times \mathbb{P}^2}}(1,\dots,1)\right) \cong H^1\left(\mc{O}_{W_{d,i} \times \mathbb{P}^2}(1,\dots,1,0)\right).
\end{equation}
Since $W_{d,i}$ is the product of $\mathbb{P}^1$ and points lying in $\mathbb{P}^2$ and  $H^1(\mc{O}_{\mathbb{P}^1}(1)) = H^1(\mc{O}_{\{pt\}}(1))= H^1(\mc{O}_{\mathbb{P}^2}) = 0$, the K\"unneth formula implies that the right hand side of  (\ref{eq:H^1(IVd)}) is equal to zero.

Consider the case of $d$ and $i$ even. As $pr_{1,\dots,d}(W_{d+1,i}) = W_{d,i}$ and \linebreak $pr_{d+1}(W_{d+1,i}) = [1:\omega:\omega^2]$ for $\omega = \omega_{d,i}$ a third of unity, we have that $W_{d+1,i}$ is defined in $W_{d,i} \times \mathbb{P}^2$ by two equations of degree $(0,\dots,0,1)$. The defining equations (in variables $x,y,z$) of $[1:\omega:\omega^2]$ form a $k[x,y,z]$-regular sequence and so we have the Koszul resolution of $\mc{I}_{\frac{W_{d+1,i}}{W_{d,i}\times \mathbb{P}^2}} \ten \mc{O}_{(\mathbb{P}^2)^{\times d+1}}(1,\dots,1)$:
\begin{equation} \label{eq:resI(Vd)}
\begin{aligned}
0 \ra \mc{O}_{W_{d,i} \times \mathbb{P}^2}(1,\dots,1,-1) &\ra
\mc{O}_{W_{d,i} \times \mathbb{P}^2}(1,\dots,1,0)^{\oplus 2}\\
&\ra \mc{I}_{\frac{W_{d+1,i}}{W_{d,i} \times \mathbb{P}^2}}(1, \dots, 1) ~~\ra~ 0.
\end{aligned}
\end{equation}

\noindent Now apply the global section functor to sequence (\ref{eq:resI(Vd)}) and note that $$H^j(\mc{O}_{W_{d,i}}(1,\dots,1)) = H^j(\mc{O}_{\mathbb{P}^2}) = H^j(\mc{O}_{\mathbb{P}^2}(-1))= 0 \text{~~~~for~} j=1,2.$$

\noindent Hence the K\"unneth formula yields
$$H^1\left(\mc{O}_{W_{d,i} \times \mathbb{P}^2}(1,\dots,1,0)\right)^{\oplus 2} =
H^2\left(\mc{O}_{W_{d,i} \times \mathbb{P}^2}(1,\dots,1,-1)\right) = 0.$$
Therefore $H^1\Big(\mc{I}_{\frac{W_{d+1,i}}{W_{d,i} \times \mathbb{P}^2}}(\ones)\Big) = 0$ for $d$ and $i$ even.

We conclude that for $d$ even, we know {\small $\ds \bigoplus_{i=1}^6 H^1\Big(\mc{I}_{\frac{W_{d+1,i}}{W_{d,i} \times \mathbb{P}^2}}(\ones) \Big)$} = 0.
For $d$ odd, the same conclusion is drawn by swapping the arguments for the $i$
even and $i$ odd subcases. 
\end{proof}

\begin{lem}\label{lem:psisurj}
The map $\psi$ is surjective for $d \geq 2$.
\end{lem}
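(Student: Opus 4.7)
The plan is to show surjectivity one target summand at a time: for each singular point $v_{d,k}$ and each element $L$ of the two-dimensional target
\[
H^0\bigl(\{v_{d,k}\} \times \mathbb{P}^2,\, \mc{I}_{\{v_{d+1,k}\}/\{v_{d,k}\} \times \mathbb{P}^2}(\ones)\bigr) \;\cong\; H^0(\mathbb{P}^2, \mc{I}_{p_{d,k}}(1))
\]
(a linear form on $\mathbb{P}^2$ vanishing at the last coordinate $p_{d,k}$ of $v_{d+1,k}$), I would produce an explicit preimage supported in a single summand of the source direct sum.

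The key combinatorial observation is this. By Remark \ref{rmk:Sing(V_d)}, $v_{d,k}$ lies in exactly two components $W_{d,i_1}, W_{d,i_2}$. Inspecting the patterns in Proposition \ref{prop:gammadparam}, these two components are complementary, in the sense that at each of the $d$ positions one of them has a $\mathbb{P}^1$-factor and the other a single-point factor. Consequently, exactly one of the extensions $W_{d+1,i_j}$ is obtained from $W_{d,i_j}$ by multiplying with a single point in the $(d+1)$-st slot (the ``$d$ and $i$ even'' situation in the proof of Lemma \ref{lem:beta}), and that point is forced to be $p_{d,k}$, since $v_{d+1,k} \in W_{d+1,i_j}$. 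Relabel so that this component is $W_{d,i_1}$; then $W_{d+1,i_1} = W_{d,i_1} \times \{p_{d,k}\}$ and hence
\[
\mc{I}_{W_{d+1,i_1}/W_{d,i_1}\times\mathbb{P}^2}(\ones) \;\cong\; pr_1^{\ast}\mc{O}_{W_{d,i_1}}(\ones) \,\ten\, pr_2^{\ast}\mc{I}_{p_{d,k}/\mathbb{P}^2}(1).
\]

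Since $W_{d,i_1}$ is a product of $\mathbb{P}^1$'s and fixed points, the K\"unneth formula gives $H^1(W_{d,i_1}, \mc{O}(\ones)) = 0$, so
\[
H^0\bigl(\mc{I}_{W_{d+1,i_1}/W_{d,i_1}\times\mathbb{P}^2}(\ones)\bigr) \;\cong\; H^0(\mc{O}_{W_{d,i_1}}(\ones)) \ten H^0(\mc{I}_{p_{d,k}}(1)),
\]
and the restriction map to $\{v_{d,k}\} \times \mathbb{P}^2$ sends $f \ten L$ to $f(v_{d,k}) \cdot L$. I would then choose $f \in H^0(\mc{O}_{W_{d,i_1}}(\ones))$ with $f(v_{d,k}) \neq 0$ and $f(v_{d,k'}) = 0$, where $v_{d,k'}$ is the other singular point lying in $W_{d,i_1}$. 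Such an $f$ exists because $\mc{O}_{W_{d,i_1}}(\ones)$ is very ample and hence separates the distinct points $v_{d,k}, v_{d,k'}$ (concretely, they differ in some $\mathbb{P}^1$-factor, where a suitable linear form separates them). Placing a rescaling of $f \ten L$ in the $i_1$-slot of the source (and zero in the other five slots) then produces, via the Snake-Lemma difference map $\psi$, the target vector whose $k$-slot equals $L$ and whose other slots vanish: at $v_{d,k'}$ the image is zero because $f(v_{d,k'}) = 0$, and at $v_{d,k''}$ with $k'' \notin \{k,k'\}$ it is zero because $v_{d,k''} \notin W_{d,i_1}$.

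Summing such preimages over the six values of $k$ yields surjectivity of $\psi$. The only substantive obstacle is the combinatorial verification that every $v_{d,k}$ admits such a ``point-extension'' neighbor among its two containing components, a routine case check against the six patterns of Proposition \ref{prop:gammadparam}; once that is in hand, the argument reduces to a K\"unneth computation combined with the very ampleness of $\mc{O}_{W_{d,i}}(\ones)$.
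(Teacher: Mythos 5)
Your proposal is correct and follows essentially the same route as the paper's proof: you pick, for each singular point $v_{d,k}$, the unique containing component whose $(d+1)$-st extension appends the single point $p_{d,k}$, use very ampleness of $\mc{O}_{W_{d,i}}(\ones)$ to separate the two singular points of that component, and multiply by a linear form vanishing at $p_{d,k}$ before extending by zero. The only (harmless) difference is bookkeeping: you build the preimage directly inside $H^0$ of the ideal sheaf via the K\"unneth/tensor identification, whereas the paper constructs the section in $\bigoplus_i H^0(\mc{O}_{W_{d,i}\times\mathbb{P}^2}(\ones))$ and then verifies separately that it lies in $\ker(\beta)$.
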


\begin{proof} Refer to the notation from Diagram 2. To show $\psi$ is onto, here is our plan of attack.
\begin{enumerate}
\item Choose a basis of  {\small $\ds \bigoplus_k H^0\Big( \mc{I}_{\frac{\{v_{d+1,k}\}}{\{v_{d,k}\} \times \mathbb{P}^2}}(\ones)\Big)$} so that each basis element $t$ lies in {\small $H^0\Big( \mc{I}_{\frac{\{v_{d+1,k_0}\}}{\{v_{d,k_0}\} \times \mathbb{P}^2}}(\ones)\Big)$} for some $k=k_0$. For such a basis element $t$, identify its image under $\lambda$ in {\small $\ds \bigoplus_k H^0\left( \mc{O}_{\{v_{d,k}\} \times \mathbb{P}^2}(\ones) \right)$}.
\item Construct for $\lambda(t)$ a suitable preimage $s \in \nu^{-1}(\lm (t))$.
\item Prove $s \in$ ker($\beta$).
\end{enumerate}
As a consequence, $s$ lies in $\ds \bigoplus_i$ {\small $H^0\Big( \mc{I}_{\frac{W_{d+1,i}}{W_{d,i}\times \mathbb{P}^2}}(\ones)\Big)$} and serves as a preimage to $t$ under $\psi$. In other words, $\psi$ is surjective.
To begin, fix such a basis element $t$ and integer $k_0$.
\medskip

\noindent {\bf Step 1:}
Observe that $pr_{1,\dots,d}(\{v_{d+1,k_0}\}) = \{v_{d,k_0}\}$ and $pr_{d+1}(\{v_{d+1,k_0}\}) = [1:\omega:\omega^2]$ for some $\omega$, a third root of unity (Remark \ref{rmk:Sing(V_d)}). Thus our basis element $t \in$ {\small $\ds \bigoplus_k H^0\Big( \mc{I}_{\frac{\{v_{d+1,k}\}}{\{v_{d,k}\} \times \mathbb{P}^2}}(\ones)\Big)$} is of the form
\begin{equation}\label{eq:t}
t= a(\omega x_d - y_d) + b(\omega^2 x_d - z_d)
\end{equation}
\noindent for some $a,b \in k$, with \{$\omega x_d - y_d$, $\omega^2 x_d - z_d$\} defining
$[1:\omega:\omega^2]$ in the $(d+1)^{st}$ copy of $\mathbb{P}^2$.
Note that $\lambda$ is the inclusion map so we may refer to $\lambda(t)$ as $t$.
This concludes Step 1.\qed
\medskip

\noindent {\bf Step 2:} Next we construct a suitable preimage $s \in \nu^{-1}(\lambda(t))$. Referring to Remark \ref{rmk:Sing(V_d)}, let us observe
that for all $k$, there is an unique even integer := $i_k''$ and unique odd integer := $i_k'$ so that $v_{d,k} \in W_{d,i_k''} ~\cap~
W_{d,i_k'}$ for all $k= 1,\dots,6$.
For instance with $k_0=1$, we consider the membership $v_{d,1} \in W_{d,2} \cap
W_{d,3}$; hence $i_1'' = 2$ and $i_1' = 3$. 

As a consequence, $\lambda(t)$ has preimages under $\nu$ in
{\small
$$H^0\left(W_{d,i_{k_0}''} \times
\mathbb{P}^2, \mc{O}_{W_{d,i_{k_0}''} \times
\mathbb{P}^2}(\ones)\right)
\oplus 
~H^0\left(W_{d,i_{k_0}'} \times
\mathbb{P}^2, \mc{O}_{W_{d,i_{k_0}'} \times
\mathbb{P}^2}(\ones)\right).$$}

\vspace{-.1in}

\noindent For $d$ even (respectively odd) we write $i_{k_0} := i_{k_0}''$ (respectively $i_{k_0} := i_{k_0}'$). Therefore we intend to construct $s \in \nu^{-1}(t)$ belonging to $H^0\Big(\mc{O}_{W_{d,i_{k_0}} \times
\mathbb{P}^2}(\ones)\Big)$.
However this $W_{d,i_{k_0}}$ will also contain another point
$v_{d,j}$ for some $j \neq k_0$. Let us define the
global section $\tilde s \in H^0\Big(\mc{O}_{W_{d,i_{k_0}} \times
\mathbb{P}^2} (\ones) \Big)$ as follows. 
Since $\mc{O}_{W_{d,i_{k_0}}}(\ones)$ is a very ample sheaf, we have a global section $\tilde s_{k_0}$ separating the points $v_{d,k_0}$ and $v_{d,j}$; say $\tilde s_{k_0}(v_{d,k}) = \delta_{k_0, k}$.
We then use  ($\ref{eq:t}$) to define
$\tilde s$ by 
\[
\tilde s  =  \tilde s_{k_0} \cdot [a(\omega x_d - y_d) + b(\omega^2 x_d - z_d)].
\]
where $[1:\omega:\omega^2] = pr_{d+1}(\{v_{d+1,k_0}\})$.
We now extend this section $\tilde s$ to 
{\small $$s \in \ds \bigoplus_{i=1}^6 H^0 \left(\mc{O}_{W_{d,i} \times \mathbb{P}^2}(\ones) \right) \cong \left(\ds \bigoplus_{i=1}^6 H^0 \left(\mc{O}_{W_{d,i}}(\ones) \right) \right) \ten H^0(\mc{O}_{\mathbb{P}^2}(1)).$$}

\noindent This is achieved by setting
$s = \tilde s$ on $W_{d,i_{k_0}} \times \mathbb{P}^2$ and 0 elsewhere.
To check that $\nu(s) = t$, note
{\small 
\begin{equation}\label{eq:s}
\hspace{.5in} s = \bigoplus_{i=1}^6 s_i \text{~~where~}
s_i \in H^0 \left(\mc{O}_{W_{d,i} \times \mathbb{P}^2}(\ones) \right),~
s_i=
\begin{cases}
\tilde s, &i = i_{k_0},\\
0, & i \neq i_{k_0};
\end{cases}
\end{equation}}

Therefore by the construction of $\tilde s$, we have $\nu(\tilde s) = t \big |_{\{v_{d,k_0}\} \times \mathbb{P}^2}$. Hence we have built our desired preimage $s \in \nu^{-1}(t)$ and this concludes Step 2.\qed
\bigskip

\noindent {\bf Step 3:}
Recall the structure of $s$ from  (\ref{eq:s}). By definition of $\beta$, we have
that $\beta(s) = \beta \left( \bigoplus_{i=1}^6 s_i
\right)$ is  equal to
$\bigoplus_{i=1}^6 \left(s_i \big|_{W_{d+1,i}} \right)$.

For $i \neq i_{k_0}$, we clearly get that $s_i \big|_{W_{d+1,i}} = 0$. On the other hand, the key point of our construction is that
 $W_{d+1, i_{k_0}} = W_{d,i_{k_0}} \times [1:\epsilon:\epsilon^2]$ for some $\epsilon^3 =1$ as $i_{k_0}$ is chosen to be even (respectively odd) when $d$ is even (respectively odd) (Proposition \ref{prop:gammadparam}). Moreover $v_{d+1,k_0} \in W_{d+1,i_{k_0}}$ and $$pr_{d+1}(W_{d+1,i_{k_0}}) = pr_{d+1}(\{v_{d+1,k_0}\}) = [1:\omega:\omega^2]$$
where $\omega$ is defined by Step 1 and Remark \ref{rmk:Sing(V_d)}.  Thus $\epsilon = \omega$. Now we have
\[
s_{i_{k_0}} \big|_{W_{d+1,i_{k_0}}}  =
\tilde s_{k_0} \cdot [a(\omega x_d - y_d) + b(\omega^2 x_d - z_d)] \Big|_{[1:\omega:\omega^2]} = 0.
\]

\noindent Therefore $s_i \big|_{W_{d+1,i}} = 0$ for all $i=1, \dots,6$. Hence $\beta(s) =0$.\qed
\medskip

\noindent Hence Steps 1-3 are complete which concludes the proof of Lemma \ref{lem:psisurj}. \end{proof}

\noindent Consequently, we have verified Proposition \ref{prop:H^1(IV)}. \qed
\medskip

One of the main results why twisted homogeneous coordinate rings are so useful for studying Sklyanin algebras is that tcrs are factors of their corresponding Sklyanin algebra (by some homogeneous element; refer to Theorem \ref{thm:Skl3}). The following corollaries to Theorem \ref{thm:Bgenindeg1} illustrate an analogous result for $S_{deg}$.

\begin{cor} \label{cor:factor} Let $B$ be the point parameter ring of a degenerate Sklyanin algebra $S_{deg}$. Then $B \cong S_{deg}/K$ for some ideal $K$ of $S_{deg}$ that has six generators of degree 4 and possibly higher degree generators.
\end{cor}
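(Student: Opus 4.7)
The plan is to combine Theorem \ref{thm:Bgenindeg1} with a comparison of the Hilbert series of $S_{deg}$ and $B$. First I would argue that the surjection $\phi\colon S_{deg}\twoheadrightarrow B$ exists. Since $B$ is generated in degree one (Theorem \ref{thm:Bgenindeg1}) and $\dim_k B_1 = 3$ (matching $\dim_k(S_{deg})_1$), we obtain a graded surjection from the tensor algebra $T(B_1)$ onto $B$. The defining relations $f_0,g_0,h_0$ of $S_{deg}$ are, by construction, precisely the multilinearizations cutting out $V_2\subset\mathbb{P}^2\times\mathbb{P}^2$ (Definition \ref{def:truncptsch}), so they vanish identically as sections of $\mathcal{L}_2$ on $V_2$. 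Hence they lie in the kernel of $T(B_1)_2\to B_2$, and the surjection $T(B_1)\twoheadrightarrow B$ factors through $S_{deg}$, producing the desired graded surjection $\phi$. Set $K=\ker\phi$.

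Next I would compute $\dim_k K_d = \dim_k (S_{deg})_d - \dim_k B_d$ for small $d$. By Remark \ref{rmk:S(1bc)} we have $\dim_k (S_{deg})_d = 3\cdot 2^{d-1}$ for $d\geq 1$, and by Proposition \ref{prop:|B_d|}, $\dim_k B_d = 3\bigl(2^{\lfloor (d+1)/2\rfloor}+2^{\lceil (d-1)/2\rceil}\bigr)-6$. A quick tabulation gives
\[
\dim_k K_1 = 3-3 = 0, \quad \dim_k K_2 = 6-6 = 0, \quad \dim_k K_3 = 12-12 = 0, \quad \dim_k K_4 = 24-18 = 6.
\]
Since $K$ is a graded ideal with $K_d=0$ for $d\leq 3$, any minimal homogeneous generating set of $K$ must contain exactly six independent elements in degree $4$ spanning $K_4$.

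The argument gives no information about whether $K$ is generated entirely in degree $4$ or whether further generators are needed in higher degrees; the Hilbert series of $S_{deg}/(K_4)$ could a priori be larger than $H_B(t)$ in some degree $\geq 5$. This is precisely why the statement includes the clause ``and possibly more of higher degree.'' The main (and essentially the only) obstacle is therefore not the existence of the six degree-$4$ generators, which falls out of bookkeeping, but the question of whether they suffice — a finer question about the defining ideal that the corollary deliberately leaves open.
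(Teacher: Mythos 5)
Your proposal is correct and follows essentially the same route as the paper: the surjection $S_{deg}\twoheadrightarrow B$ comes from Theorem \ref{thm:Bgenindeg1} (your extra step checking that the multilinearized relations vanish on $V_2$ just makes explicit why the map factors through $S_{deg}$), and the six degree-4 generators come from comparing $\dim_k (S_{deg})_d$ (Remark \ref{rmk:S(1bc)}) with $\dim_k B_d$ (Proposition \ref{prop:|B_d|}), giving $K_d=0$ for $d\le 3$ and $\dim_k K_4=6$. Your tabulated values ($24$ versus $18$ in degree $4$) are the ones the paper's formulas actually yield, and the conclusion agrees with the paper's.
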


\begin{proof}
By Theorem \ref{thm:Bgenindeg1}, $S_{deg}$ surjects onto $B$ say with kernel $K$. By Remark \ref{rmk:S(1bc)} we have that $\dim_k S_4 = 57$, yet we know $\dim_k B_4 = 63$ by Proposition \ref{prop:|B_d|}. Hence $\dim_k K_4 = 6$. The same results also imply that $\dim_k S_d = \dim_k B_d$ for $d \leq 3$.
\end{proof}

\begin{cor}\label{cor:restofB} The ring $B= B(S_{deg})$ is neither a domain or Koszul.
\end{cor}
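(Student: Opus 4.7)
The plan for Corollary \ref{cor:restofB} is to deduce both failures directly from Corollary \ref{cor:factor} together with the concrete structure of $S_{deg}$ already unearthed in $\S 2$.

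To show $B$ is not a domain, I would lift zero divisors from $S_{deg}$ to $B$. The proof of Corollary \ref{cor:S(1bc)} exhibits a pair of nonzero degree-one elements of $S_{deg}$ whose product is zero: either the explicit factorization $(x+by+bc^2z)(cx+cy+b^2z) = f_1+bf_2+cf_3$ in the case $a^3=b^3=c^3=1$, or simply a pair of variables whose product is itself a monomial relation (e.g.\ $y \cdot z = 0$ in $S(1,0,0)$). Since $K_d = 0$ for $d \leq 3$ by Corollary \ref{cor:factor}, the quotient map $S_{deg} \twoheadrightarrow B$ is an isomorphism in degree one, so these factors remain nonzero in $B_1 = kx \oplus ky \oplus kz$, while their product is still forced to be zero. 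Hence $B$ has zero divisors.

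For the non-Koszul assertion, the strategy is to show that $B$ is not even a quadratic algebra. By Definition \ref{def:Koszul}, a Koszul algebra's minimal resolution has $M_2$ with entries in $A_1$, which forces its defining ideal in the tensor algebra on $A_1$ to be generated in degree two. Writing $B = k\{x,y,z\}/I$, the degree-two component $I_2$ is spanned precisely by the three Sklyanin-type relations of $S_{deg}$, so the subideal $(I_2) \subset k\{x,y,z\}$ is the defining ideal of $S_{deg}$. If $B$ were quadratic then $I = (I_2)$ would force $B \cong S_{deg}$, contradicting the fact from Corollary \ref{cor:factor} that $K$ carries six genuinely new generators in degree four. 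Thus $B$ is not quadratic, and in particular not Koszul.

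I do not anticipate any real obstacle; the entire corollary is bookkeeping on top of Corollary \ref{cor:factor} and Corollary \ref{cor:S(1bc)}. The one mildly delicate point is ensuring that $K$ contributes nothing in degrees $\leq 3$, so that the degree-one factors in the non-domain argument survive to $B$ and so that $I_2$ really does just give the Sklyanin relations; both are immediate from the dimension comparison $\dim_k S_d = \dim_k B_d$ for $d \leq 3$ that underlies Corollary \ref{cor:factor}.
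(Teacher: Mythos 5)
Your proposal is correct and matches the paper's argument: the paper likewise lifts the linear zero divisors of $S_{deg}$ from Corollary \ref{cor:S(1bc)} to $B$ via Corollary \ref{cor:factor}, and rules out Koszulity because $B$ has genuine degree-$4$ relations, hence is not quadratic. Your write-up merely spells out the quadraticity step (that $I_2$ gives exactly the Sklyanin relations, so $I \neq (I_2)$) which the paper leaves implicit.
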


\begin{proof}
By Corollary \ref{cor:S(1bc)}, there exist linear nonzero elements $u,v \in S$ with $uv =0$. The image of $u$ and $v$ are nonzero, hence $B$ is not a domain due to Corollary \ref{cor:factor}. Since $B$ has degree 4 relations, it does not possess the Koszul property.
\end{proof}


\begin{thebibliography}{abcd}

\bibitem{ASc}
M.~Artin and W.~Schelter, Graded algebras of global dimension 3, {\it Adv. in Math.} {\bf 66} (1987), 171--216.

\bibitem{ASta} M.~Artin and J.~T.~Stafford, Noncommutative graded domains with quadratic growth, {\it Invent. Math.} {\bf 122} (1995), 231--276.

\bibitem{ATV1}
M.~Artin, J.~Tate, and M.~van den Bergh, Some algebras associated to automorphisms of elliptic curves, {\it The Grothendieck Festschrift}, vol. 1, Birkh\"auser (1990), 33--85. 

\bibitem{ATV2}
M.~Artin, J.~Tate, and M.~van den Bergh, Modules over regular algebras of dimension 3, {\it Invent. Math.} {\bf 106} (1991), 335--389.

\bibitem{AZ}
M.~Artin and J.~J.~Zhang, Abstract Hilbert Schemes, {\it Alg. Rep. Theory} {\bf 4} (2001), 305--394.

\bibitem{Be}
G.~M.~Bergman, The Diamond Lemma for Ring Theory, {\it Adv. Math.} {\bf 29} (1978), 178--218.

\bibitem{BjSta}
J.~E.~Bj\"ork and J.~T.~Stafford, email correspondence with M. Artin, January 31, 2000.

\bibitem{BoG}
E.~Bombieri and W.~Gubler, {\it Heights in Diophantine Geometry}, Cambridge University Press, 2006.

\bibitem{CSh}
T.~Cassidy and B.~Shelton, Generalizing the Notion of Koszul Algebra, math.RA/0704.3752v1.

\bibitem{GW} K.~R.~Goodearl and R.~B.~Warfield, Jr., {\it An Introduction to Noncommutative Noetherian Rings}, London. Math. Soc. Student Texts vol. 61, Cambridge University Press, 2004.

\bibitem{Ha}
R.~Hartshorne, {\it Algebraic Geometry}, Graduate Text in Mathematics 52, Springer-Verlag, New York, 1977.

\bibitem{KeRSta} D.~S.~Keeler, D.~Rogalski, and J.~T.~Stafford, Na\"ive Noncommutative Blowing Up, {\it Duke Math. J.} {\bf 126} (3) (2005), 491--546.

\bibitem{Kr}
U.~Kr\"{a}hmer, Notes on Koszul Algebras, {\tt \scriptsize www.impan.gov.pl/ \hspace{-.1in} $\sim$ \hspace{-.1in} kraehmer/ \hspace{-.1in} connected.pdf}.

\bibitem{PP}
A.~Polishchuk and L.~Positselski, {\it Quadratic Algebras}, Amer. Math. Soc. University Lecture Series {\bf 37}, 2005.

\bibitem{RSta}
D.~Rogalski and J.~T.~Stafford, {\it A Class of Noncommutative Projective Surfaces}, arXiv:math/0612657v1.

\bibitem{RZ}
D.~Rogalski and J.~J.~Zhang, Canonical maps to twisted rings, {\it Math. Z.} {\bf 259} (2) (2008), 433--455.

\bibitem{ShT}
B.~Shelton and  C. Tingey, On Koszul algebras and a new construction of Artin– Schelter regular algebras, {\it J. Alg.} {\bf 241} (2001), 789–-798.

\bibitem{SmSta} S.~P.~Smith and J.~T.~Stafford, Regularity of the 4-dimensional Sklyanin algebra, {\it Compositio Math.} {\bf 83} (1992), 259--289.

\bibitem{Sta}
J.~T.~Stafford, Math 715: Noncommutative Projective Algebraic Geometry (course notes), University of Michigan, Winter 2007.

\bibitem{SteZ}
D.~R.~Stephenson and J.~Zhang, Growth of Graded Noetherian Rings, {\it Proc. Amer. Math. Soc.} {\bf 125} (1997), 1593--1605.

\bibitem{Zh}
J.~J.~Zhang, {\it Twisted Graded Algebras and Equivalences of Graded Categories}, Proc. London Math Soc. (3) {\bf 72} (1996) 281--311.

\end{thebibliography}
\end{document}